\documentclass[letterpaper,reqno,12pt]{amsart} 
\usepackage{setspace}
\usepackage{graphicx} 
\usepackage{cite}
\usepackage{amssymb, enumerate, color}
\usepackage{hyperref}
\usepackage{enumitem}
\usepackage{graphicx}
\usepackage{subcaption}
\usepackage{here}
\usepackage[margin=1.2in]{geometry}
\usepackage{amsmath}
\usepackage{fancyhdr}
\usepackage{indentfirst}
\usepackage{tikz-cd}
\usepackage{tikz}
\usetikzlibrary{3d}
\usetikzlibrary{calc}
\usepackage{amsthm}
\usepackage[section]{placeins}
\usepackage{mathrsfs}
\usepackage[all]{xy}
\usepackage{setspace}
\usepackage{xcolor}
\usepackage[T1]{fontenc}
\usepackage[utf8]{inputenc}
\usepackage{cleveref}
\usepackage{musicography}
\hypersetup{colorlinks=true,colorlinks=true,linkcolor=blue,urlcolor=blue, citecolor=blue}

\DeclareMathOperator{\Bs}{Bs}
\DeclareMathOperator{\Supp}{Supp}

\DeclareMathOperator{\Proj}{Proj}
\DeclareMathOperator{\Spec}{Spec}

\DeclareMathOperator{\chara}{char}
\DeclareMathOperator{\sdeg}{sdeg}

\newcommand{\red}{\textcolor{red}}

\newcommand{\tp}{\mathrm{tp}}
\numberwithin{equation}{section} 

\theoremstyle{plain}

\newtheorem{theorem}{Theorem}[section]
\newtheorem{lemma}[theorem]{Lemma}
\newtheorem{proposition}[theorem]{Proposition}

\newtheorem{corollary}[theorem]{Corollary}

\newtheorem{definition}{Definition}[section]
\newtheorem{remark}{Remark}[section]
\newtheorem{example}{Example}[section]

\title{Boundedness of Stein degrees in positive characteristics}
\author{Xintong Jiang}
\address{Tsinghua University}
\email{xt-jiang21@mails.tsinghua.edu.cn}
\date{}

\begin{document}
\begin{abstract}
In this paper, we prove the Shokurov's conjecture in relative dimension $1$-case in sufficiently large characteristics. Depending on this result, we prove the boundedness of Stein degrees for horizontal boundaries in a log Calabi-Yau fibration in sufficiently large characteristics.
\end{abstract}
\maketitle
\tableofcontents
\section{Introduction}
We work over an algebraically closed field $k$ of characteristic $p>0$.  A log Calabi-Yau fibration is a projective contraction
\[
 f:(X,B)\longrightarrow Z
\]
such that $(X,B)$ is log canonical and $K_X+B\sim_\mathbb Q0/Z$.  For a proper dominant morphism $S\to Z$ of normal integral varieties, let
\[
 S\longrightarrow V\longrightarrow Z
\]
be its Stein factorization.  We write
\[
 \sdeg(S/Z)=\deg(V/Z)
\]
for the Stein degree of $S$ over $Z$.  If $S$ is not normal, we can apply this definition to its normalization $S^\nu$.  Equivalently, $\sdeg(S^\nu/Z)$ is the degree over $K(Z)$ of the algebraic closure of $K(Z)$ in $K(S)$.  Thus Stein degree isolates the finite part of a proper morphism and, in positive characteristic, retains both the separable and purely inseparable parts of the corresponding algebraic constant-field extension.

Stein degree was introduced by Birkar in the construction and boundedness of stable minimal models \cite{BirkarFano}.  There the Stein factorization records the number of connected components of the general fibres of a proper morphism; after normalization, the same invariant records their irreducible components.  In characteristic zero, for a log Calabi--Yau fibration $(X,B)\to Z$ of dimension $d$, every non-klt centre of $(X,B)$ has Stein degree over $Z$ bounded in terms of $d$ alone \cite{BirkarFano}.  This boundedness is an essential input in Birkar's moduli theory and is also of independent interest.

Birkar subsequently proposed the stronger statement $\mathcal H(d,t)$: if $S$ is a horizontal irreducible component of $B$ with $\mu_S B\ge t>0$, then $\sdeg(S^\nu/Z)$ should be bounded in terms of $d$ and $t$ only.  Birkar and Qu proved this conjecture in characteristic zero for generalized log Calabi--Yau fibrations, together with its vertical analogue $\mathcal V(d,t)$ for Fano type fibrations \cite{BirkarQu}.  Their proof follows the inductive route
\[
 \mathcal H(1,t)\Longrightarrow\mathcal V(1,t)
 \Longrightarrow\mathcal V(d,t)\Longrightarrow\mathcal H(d,t),
\]
using the minimal model program, bounded complements, bounded Fano varieties, and toroidal geometry.  For vertical divisors, the Fano type hypothesis is necessary: without it the strong Stein degree is unbounded \cite{BirkarQuNonFano}.

Our first result is the positive characteristic version of the Shokurov conjecture, which concerns bases of relative-dimension-one Fano type log Calabi-Yau fibrations. 
\begin{theorem}[\Cref{mult1}]
    Let $\epsilon>0$ be a real number and $R\subset [0,1]$ be a finite set of rational numbers, then there exists a prime number $p_0$ and a real number $\delta$ depending only on $\epsilon$ and $R$ such that suppose:
    \begin{enumerate}
        \item $(X,B)\to Z$ is an $\epsilon$-lc Fano-type fibration,
        \item $\dim X-\dim Z=1$,
        \item $B\in R$,
        \item $K_X+B\sim_\mathbb Q 0/Z$
        \item $\dim X\leq 3$ and
        \item $\chara k=p>p_0$,
    \end{enumerate} then the general fibers of $X/Z$ are $\mathbb P^1$ and there is a canonical bundle formula 
    \[K_X+B\sim_\mathbb Q f^*(K_Z+B_Z+M_Z)\]
    with the generalized pair $(Z,B_Z+M_Z)$ generalized $\delta$-lc. In particular, for any prime divisor $D$ on $Z$, the coefficients of $f^*D$ is bounded from above depending only on $\epsilon$ and $R$ after properly shrinking near the generic point of $D$.
\end{theorem}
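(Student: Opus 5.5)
The argument has three parts: first make the general fibre a smooth conic with good reduction, then compute the discriminant coefficients over a codimension-one point of $Z$, and finally get the uniform $\delta$ from an ACC/BAB-type finiteness statement that holds once $p$ is large.

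\textbf{Step 1: the general fibre is $\mathbb P^1$.} The generic fibre $X_\eta$ is a regular curve over $K(Z)$ with $-K_{X_\eta}$ big, since the fibration is of Fano type. In characteristic $p>3$ regular curves are geometrically reduced and smooth, because the only non-smooth regular del Pezzo/conic curves occur when $p\le 3$. So $X_{\bar\eta}\cong\mathbb P^1$, and the general fibres are $\mathbb P^1$.

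\textbf{Step 2: reduction to a codimension-one computation.} The canonical bundle formula in relative dimension one is available for tame fibrations:
\begin{itemize}
\item Write $B_Z$ for the discriminant, $b_D=1-\operatorname{lct}_{\eta_D}(X,B;f^*D)$.
\item Write $M_Z$ for the moduli part. Since the general fibre is $\mathbb P^1$, $M$ is b-nef and descends on a suitable model, as in the Prokhorov--Shokurov argument for $\mathbb P^1$-fibrations. This works because $(\mathbb P^1,B_{\bar\eta})$ has a moduli space $\overline M_{0,n}$ that is defined over $\mathbb Z$.
\end{itemize}
Since $\dim X\le 3$, $\dim Z\le 2$. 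Generalized $\delta$-lc of $(Z,B_Z+M_Z)$ is a codimension-one statement over divisors on birational models of $Z$. Running over all birational models by base change, it reduces to a bound $b_D\le 1-\delta$ for every prime divisor $D$ over $Z$, together with the $\epsilon$-lc hypothesis. Shrink $Z$ to the generic point of $D$, so that $Z$ is the spectrum of a DVR (or a curve germ) and $X$ is a surface over it. Then $X\to Z$ is an $\epsilon$-lc surface germ with a conic bundle structure. Surface MMP holds in all characteristics, and log resolution holds for surfaces and threefolds. Run an MMP over $Z$ to reach either a $\mathbb P^1$-bundle with a special fibre $\lambda F$, or a Mori fibre space with an irreducible special fibre $f^*D=mC$.

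\textbf{Step 3: the uniform $\delta$ (main obstacle).} We need $\operatorname{lct}(X,B;f^*D)\ge\delta$ with $\delta$ independent of $p$.
\begin{itemize}
\item In characteristic zero, this is Birkar's result (or Alexeev's surface boundedness): $\epsilon$-lc surface germs with coefficients in $R$ are log bounded, and they are toric-like near the special fibre. This bounds the multiplicity $m$ of $f^*D$ along components, which gives $b_D\le 1-1/m$ up to boundary contributions.
\item In characteristic $p$, I would instead classify $\epsilon$-lc surface singularities. These are quotient-type, and their minimal resolution graphs have bounded chains, with the bound depending only on $\epsilon$ in every characteristic, since the discrepancy calculus is characteristic-free on the dual graph.
\item On the minimal resolution, the fibre $f^*D$ becomes a configuration of $(-n)$-curves with bounded graph. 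Its multiplicities are computed numerically by intersection theory with $K+B$. So the multiplicities, and hence the log canonical threshold, are bounded by a finite combinatorial computation depending on $\epsilon$ and $R$.
\end{itemize}

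The delicate point is wild behaviour: inseparable multiple fibres, or a fibre that is not generically reduced for inseparable reasons. I would rule this out by choosing $p_0$ larger than all the multiplicities that appear in the bounded combinatorial list. Then all multiplicities are prime to $p$, the local covering trick is tame, and the characteristic-zero argument transfers. That establishes the generalized $\delta$-lc claim. The final assertion then follows, since a component of $f^*D$ of multiplicity $m$ forces $b_D\ge 1-1/m$.
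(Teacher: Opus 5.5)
\textbf{There is a genuine gap in Step 3.} Steps 1--2 broadly match the paper's preliminary reductions; the paper simply quotes \cite[Theorem 3.4]{complement1} for the $\mathbb P^1$ fibres and the canonical bundle formula when $p>2/\min R$, and then bounds coefficients of $B_Z$. Step 3, which carries all the content, rests on a false claim.

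$\epsilon$-lc surface singularities do \emph{not} have bounded minimal resolution graphs. $A_n$ points are canonical, hence $\epsilon$-lc for every $\epsilon\le 1$, yet they resolve to chains of $n$ $(-2)$-curves. This happens within the hypotheses of the theorem. Take $X=\{xy=t^{n+1}z^2\}\subset\mathbb P^2\times\mathbb A^1_t$ and $B=\frac12(G_1+G_2+G_3+G_4)$. Here $G_1,G_2$ (resp.\ $G_3,G_4$) are general sections meeting the line $\{x=0\}$ (resp.\ $\{y=0\}$) of the central fibre. Near that fibre the pair is $\frac12$-lc, $-K_X$ is relatively ample, and $K_X+B\equiv 0$ over the base. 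On the minimal resolution, however, the fibre is a chain of $n+2$ curves.

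Consequently:
\begin{enumerate}
\item There is no finite list from which multiplicities and thresholds can be read off.
\item Choosing $p_0$ above ``all multiplicities in the list'' to make the situation tame is circular.
\item Passing first to an extremal germ, as in your Step 2, does not help unless you supply a classification of extremal $\epsilon$-lc conic-bundle germs. That classification is exactly the hard part, and it does not follow from characteristic-free discrepancy calculus.
\end{enumerate}
Bounded multiplicities of the components of $f^*D$ on $X$ also do not by themselves bound the lct, because exceptional divisors over the fibre enter. Your char-$0$ bullet has $b_D\le 1-1/m$ the wrong way round: a component of multiplicity $m$ only gives $b_D\ge 1-1/m$, as you correctly use at the end.

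\textbf{The missing idea is Birkar's degree argument.} The paper runs it after cutting down, via a topological hyperplane section, to a surface over a curve over $k^{\tp}$. It does not localise at the generic point of $D$, which would leave you with a surface over a DVR with imperfect residue field. The argument runs as follows.
\begin{enumerate}
\item On a log resolution $W$, put coefficient $1$ on every component of the fibre over $D$ and $b=1-\epsilon/2$ on the other exceptional and boundary components. This gives $\Gamma_W$.
\item Run a $(K_W+\Gamma_W)$-MMP over $Z$ to a good minimal model $Y$ with lc model $Y'$.
\item On a general fibre, $\deg(K+\Gamma)\le v$. This holds because $\Gamma_F\le\Supp B_F$ and the horizontal components have bounded degree, their coefficients being $\ge\min R$.
\item On each fibre component $T_i$ not contracted on $Y'$, adjunction with DCC coefficients gives $\deg(K_{T_i^\nu}+\Gamma_{T_i^\nu})\ge\theta>0$.
\item Comparing $(K_Y+\Gamma_Y)\cdot h^*J=(K_Y+\Gamma_Y)\cdot h^*D$ yields $m_i\le v/\theta$.
\item A further MMP contracts the remaining fibre components. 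This produces $Y''$, on which the whole fibre lies in $\lfloor\Delta_{Y''}\rfloor$, $(Y'',\Delta_{Y''})$ is lc, and $B_{Y''}+\frac{\epsilon\theta}{v}h''^*D\le\Delta_{Y''}$.
\item By crepancy, $(X,B+\frac{\epsilon\theta}{v}f^*D)$ is lc, so one can take $\delta=\epsilon\theta/v$.
\end{enumerate}
This needs no classification of singularities or fibres, and it works over non-closed fields. Large $p$ is used only for the $\mathbb P^1$ fibres, the canonical bundle formula, and the reducedness of horizontal components on general fibres.
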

Depending on the theorem above, we get our main theorem, which is the boundedness statement $\mathcal H(2,t)$ of three-dimensional horizontal Stein degrees. 

\begin{theorem}[$\mathcal H(2,t)$, \Cref{H2t}]
    Let $(X,B)/Z$ be a 3-dimensional log Calabi-Yau fibration with $\dim Z=1$, let $S$ be a horizontal component of $B$ with coefficient greater or equal than $t$, then there is a prime number $p_0$ depending only on $t$ such that suppose $\chara k>p_0$, then $\text{sdeg}(S^\nu/Z)$ is bounded from above depending only on $t$.
\end{theorem}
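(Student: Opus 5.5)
We follow the Birkar--Qu route $\mathcal H(1,t)\Rightarrow\mathcal V(1,t)\Rightarrow\mathcal H(2,t)$, with Theorem~\ref{mult1} replacing the characteristic-zero Shokurov input. The statement is birational over the generic point of $Z$. After shrinking $Z$, Stein degree only depends on the generic fibre $(X_\eta,B_\eta)$ over $K(Z)$. Since $Z$ is a curve, $\sdeg(S^\nu/Z)$ equals the degree over $K(Z)$ of the algebraic closure of $K(Z)$ in $K(S)$. The strategy is to run an MMP that makes $S$, or a divisor dominating it, vertical over a surface. Then we bound the multiplicity of that vertical divisor using Theorem~\ref{mult1}, and translate this into a bound on the Stein degree.

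\textbf{Step 1 (reductions).} First replace $B$ by $B'\le B$ with coefficients in a finite set $R$ depending on $t$, keeping $\mu_S B'\ge t'$. Then pass to a dlt or $\mathbb Q$-factorial model; log resolutions and the MMP are available for threefolds when $p>5$. Next we need $X$ to be of Fano type over $Z$ after modification. If $K_X+B-tS$ is not pseudo-effective over $Z$, run a $(K_X+B-tS)$-MMP over $Z$. It terminates, in sufficiently large characteristic, with a Mori fibre space $X'\to Y\to Z$ on which $S'$ is ample over $Y$. If $K_X+B-tS$ is pseudo-effective, we use the generic fibre, a log Calabi--Yau surface over $K(Z)$, and cut down the boundary. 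This case is delicate.

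\textbf{Step 2 (case split).} There are three cases for the Mori fibre space $X'\to Y$.
\begin{itemize}
\item If $\dim Y=1$, then $Y\to Z$ is finite and $S'$ dominates $Y$. We work on the general fibre of $X'/Y$, a del Pezzo-type surface, with lc thresholds bounded in terms of $t$. Bounded complements (BAB in dimension two, valid over imperfect fields only with care) bound $\deg(Y/Z)$ and the Stein degree of $S'$ over $Y$.
\item If $\dim Y=0$, the situation is similar.
\item If $\dim Y=2$, then $X'\to Y$ has relative dimension one and $S'$ is a multisection. Choose a general curve $C\subset Y$ over $Z$ through the image of $S'$ in a suitable way. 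Alternatively, base change to make $S'$ a union of sections, and consider the divisor on $Y$ over which $S'$ ramifies. The Stein degree of $S'$ over $Z$ is then controlled by the degree of $Y/Z$ on the relevant component and the degree of $S'\to Y$. The latter is at most $2/t$, since $S'\cdot F\le (B\cdot F)/t=2/t$ on a general $\mathbb P^1$-fibre $F$.
\end{itemize}

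\textbf{Step 3 (the main obstacle).} The hard part is bounding the Stein degree of the base $Y$, or of the image of $S'$, over $Z$. The idea is to construct $\epsilon$-lc Fano-type data so that Theorem~\ref{mult1} applies to $X'\to Y$, and then again to the surface fibration $Y\to Z$ via its canonical bundle formula $(Y,B_Y+M_Y)$, which is generalized $\delta$-lc. Theorem~\ref{mult1} bounds the multiplicities of fibres. A large Stein degree of $S$ over $Z$ would force the fibres of $Y/Z$ over a general point of $Z$, or after base change, to split into many components or to have large multiplicity. Comparing $K_Y+B_Y+M_Y\equiv0$ on the generic fibre, a curve over $K(Z)$, bounds the field extension degree: the coefficient of the image of $S$ in $B_Y$ is at least $t$, and its degree over $K(Z)$ is at most $2/t$ times the bounded multiplicity.

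I expect the difficulty to lie in two places. One is ensuring the $\epsilon$-lc hypothesis of Theorem~\ref{mult1}; I would get it by a perturbation with bounded complements, adding a general ample piece. The other is inseparability: the purely inseparable part of the constant-field extension is invisible to general-fibre arguments. I would handle it by showing that inseparable extensions of degree $p$ force coefficients below $t$ once $p>p_0(t)$, using the canonical bundle formula's coefficient estimate.
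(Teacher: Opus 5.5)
\textbf{There is a genuine gap: the case where $S$ is ample over $Z$ is the substance of the theorem, and your proposal does not handle it.}

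Your first reduction is the paper's. Because $K_X+B-tS\sim_{\mathbb Q}-tS/Z$ and $S$ is horizontal, the ``pseudo-effective'' case you call delicate never occurs. The MMP over $Z$ ends in a Mori fibre space $X'\to Y$ with $Y\to Z$ a contraction, so $\dim Y=0$ is impossible and $\dim Y=1$ forces $Y=Z$. For $\dim Y=2$, $S'$ is horizontal over $Y$, $\sdeg(Y/Z)=1$, and $\mathcal H(1,t)$ with \Cref{composition} finishes, as in the paper.

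The gap is the remaining case $Y=Z$, where $S$ is ample over $Z$ and $X$ is of Fano type over $Z$. One sentence about bounded complements on the general fibre does not handle it. A general closed fibre detects only $\mathfrak n(S/Z)$, never $\mathfrak i(S/Z)$. Even on the generic fibre $X_\eta$ over the imperfect field $K(Z)$, boundedness of the surface does not control an arbitrary divisor $S$, which may also be contracted by whatever MMP you run.

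The paper's ingredients here are:
\begin{itemize}
\item a monotonic $n$-complement (\Cref{relacomp}), giving $B\in\mathbf T_n$ and $n(K_X+B)\sim0/Z$;
\item ACC, to make $X$ $\epsilon$-lc over $\eta$;
\item a second MMP, for $K_X$, over $Z$;
\item log boundedness over $\Spec\mathbb Z$ of $\epsilon$-lc del Pezzo surfaces over arbitrary fields \cite{bernasconi2024boundinggeometricallyintegraldel}, together with a uniform log resolution over $\mathbb Z$ (\Cref{genres1}).
\end{itemize}
Since $a(S,X,B)\le1-t<1$, the centre of $S_\eta$ is then a stratum of a bounded log smooth family. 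This bounds the separable and inseparable parts at once.

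When the $K_X$-MMP ends over a surface $Z'$ with $S$ vertical, \Cref{mult1} is not used to bound a Stein degree directly. Instead, the paper subtracts suitable coefficient-one whole fibres of minimal multiplicity to produce the $\tfrac1n$-lc input for \Cref{mult1}. Adding a general ample piece, as you suggest, cannot remove lc centres. The resulting bound on the multiplicity $l$ is then used to build $B^\flat=B-\tfrac12D+\tfrac1{2l}h^*L$. This is $\epsilon$-lc with $2nlB^\flat\sim-2nlK_X/Z$, so the anticanonical model falls back into the bounded family.

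\textbf{Step 3 does not close the argument either.} It presupposes a fibration $h\colon X\to Y$ over a surface with $S$ vertical, which your Step 2 never produces: there $S'$ is ample over $Y$.

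Grant such a set-up, with $S$ the whole fibre over a curve $T\subset Y$ and $h^*T=lS$ over the generic point of $T$. The degree count on $Y_\eta$ then gives $[K(T):K(Z)]\le2/t$, so $\sdeg(T^\nu/Z)$ is bounded including its inseparable part. The paper uses this very count, but only to limit the number of coefficient-one fibres.

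By \Cref{composition} you would still need to bound the vertical Stein degree $\sdeg(S^\nu/T)$, including its purely inseparable part. Nothing available does this: \Cref{V1t} bounds only $\mathfrak n(S/T)$. Your remedy, that inseparability forces coefficients below $t$, does not reach $S\to T$. Indeed $\mu_TB_Y\ge1-(1-\mu_SB)/l\ge t$ whatever the extension $K(S)/K(T)$ is. So this branch needs an argument you have not given, and that is exactly what the paper's bounded-family approach avoids.
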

This is the positive-characteristic analogue of the threefold case of Birkar--Qu's theorem. Here we give the proof sketch of this theorem.

After a $\mathbb Q$-factorial dlt modification of $(X,B)$, we run the MMP for $K_X+(B-bS)$, where $b=\mu_SB$.  If its Mori fibre space has a surface base, then $S$ is ample and horizontal over that base and the relative-dimension-one bound $\mathcal H(1,t)$ together with the composition lemma completes the proof.  Otherwise $S$ is ample over the original curve and $X$ is of Fano type.  A $-(K_X+qS)$-MMP and the relative complement theorem produce a fixed-index complement $B^+\ge qS$. Now running the $K_X$-MMP gives a second Mori fibre space.

If the second target is a curve, its general fibre is a uniformly klt del Pezzo surface. After a log resolution unformly over $\Spec\mathbb Z$, the boundedness of $\epsilon$-lc del Pezzo surfaces over $\Spec\mathbb Z$ then bound the function field extension of the valuation $S$, whether or not $S$ survives the MMP.  If the second target is a surface, we perturb the pair so that $(X,B)$ is lc but not klt near $S$ and the problem divides into two branches depending on the whether the lc place near $S$ is horizontal over the second target. The horizontal lc-place branch uses an adjunction estimate to compare the degree of $S$ with that of a horizontal coefficient-one divisor.  In the vertical branch, the Shokurov's conjecture gives a uniform fibre-multiplicity threshold; selecting minimal multiplicities produces whole fibres, and a controlled redistribution of coefficient-one components produces an $\epsilon$-lc boundary $B^\flat$.   The final anticanonical MMP contracts to a model whose general fibre lies in a bounded family of del Pezzo surface pairs, where the bounded valuation statement bounds the function field extension of the original divisor $S$.

\section{Preliminaries}
In this paper, a variety $X$ is a reduced scheme separated of finite type over a field $k$. Moreover, $X$ is assumed to be quasi-projective over $k$, and $k$ is algebraically closed unless stated otherwise. For a scheme $X$, we assume every quasi-compact open subset of $X$ contains finitely many irreducible components unless stated otherwise, we define its reduction $X^\text{red}\to X$ to be the maximal reduced closed subscheme of $X$ and define its normalization $X^\nu\to X$ to be the normalization of $X$ under the morphism \[\coprod\limits_{\eta\to X}\Spec (k(\eta))\to X\] where $\eta$ varies among the generic points of all irreducible components of $X$, cf. \cite[\href{https://stacks.math.columbia.edu/tag/035E}{Tag 035E}]{stacks-project}. We denote $\eta$ as a field $k(\eta)$ and also as a generic point $\Spec(k(\eta))\to X$ if not causing confusion. It is well known that the normalization $X^\nu\to X$ factors through the reduction.

\subsection{Basics in birational geometry}
In this subsection,  we introduce the basic knowledge in birational geometry we shall use in this paper. We work over a field $k$. For a variety $X$, a resolution of singularity is a proper birational map $f:Y\to X$ from a smooth variety $Y$, which is an isomorphism over the regular locus of $X$, and for the singular locus $X_{sing}$, one has $f^{-1}X_{sing}$ is a divisor with simple normal crossings. 
\begin{theorem}
    For a 3-dimensional variety $X$, there is a resolution of singularity $f:Y\to X$ which is obtained by a sequence of blow-ups along smooth centers over $X_{sing}$.
\end{theorem}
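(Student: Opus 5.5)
This statement is Cutkosky's theorem: resolution of singularities for threefolds in characteristic $p>0$, made precise later by Cossart and Piltant. I would not reprove it here. I would cite it and check that the version in the literature gives exactly the properties required by the definition of resolution above:
\begin{itemize}
\item $Y$ is smooth;
\item $f$ is proper and birational;
\item $f$ is an isomorphism over the regular locus;
\item $f^{-1}X_{sing}$ is a simple normal crossing divisor;
\item $f$ is a composition of blow-ups along smooth (regular) centres lying over $X_{sing}$.
\end{itemize}

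I would organise the argument as a reduction to the known theorems in three steps. First, since $X$ is quasi-projective over the algebraically closed field $k$, embed it as a locally closed subvariety of a smooth ambient variety. Then invoke the Cossart--Piltant theorem in its embedded, functorial form. That theorem resolves any reduced excellent quasi-projective scheme of dimension at most three by a finite sequence of blow-ups along regular centres contained in the singular locus, so it is an isomorphism over the regular locus.

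Second, upgrade the exceptional locus to a simple normal crossing divisor. Cossart--Piltant (and, for surfaces and threefolds, Cossart--Jannsen--Saito for the boundary) provide a principalisation: after further blow-ups along regular centres lying over the non-snc locus of $f^{-1}X_{sing}$, the total transform becomes an snc divisor. Each of these centres sits over $X_{sing}$, so the regular locus is never touched. Over a perfect (here algebraically closed) field, regular centres are smooth, and regularity of $Y$ is the same as smoothness. This identifies "regular" with "smooth" throughout.

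Third, check compatibility with the conventions of this paper. The result must be quasi-projective, and blow-ups of quasi-projective varieties are quasi-projective. If $X$ is reducible, normalise the components or treat each one separately; since $Y$ is smooth, its components are disjoint, so the statement makes sense.

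The main obstacle is in the second step, and it is a matter of bookkeeping rather than new mathematics. One must ensure that the principalisation making $f^{-1}X_{sing}$ snc only uses centres over $X_{sing}$ and does not destroy smoothness already achieved. My plan is to take the non-snc locus of the exceptional divisor as the ideal to principalise, inside the smooth threefold $Y$. The embedded resolution for surfaces in a smooth threefold is available in every characteristic (Abhyankar, Cossart--Jannsen--Saito), and it gives exactly this.
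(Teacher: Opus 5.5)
Your route is the paper's route: the paper's proof is only a pointer to Cutkosky and Cossart--Piltant. So everything depends on whether the cited results, as you describe them, actually give the statement. There is a real problem here, and it sits exactly at the clause that goes beyond the bare existence of a resolution. In your first step you invoke ``the Cossart--Piltant theorem in its embedded, functorial form'', which resolves a threefold by a finite sequence of blow-ups along regular centres inside the singular locus. As far as I know, no such form exists for threefolds in positive characteristic. Embedded or functorial resolution by permissible blow-ups is available there only up to dimension two (Cossart--Jannsen--Saito). Cossart--Piltant reduce resolution to local uniformization and then glue by a Zariski-type patching argument. What they provide is a proper (projective in the quasi-projective case) birational morphism $\pi\colon\tilde X\to X$ with $\tilde X$ regular which is an isomorphism over the regular locus. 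It is not presented as a composite of blow-ups along regular centres. So in your write-up the property ``obtained by a sequence of blow-ups along smooth centers over $X_{sing}$'' is asserted, not derived. You would need a genuine source for it, or else drop that clause; the paper's bare citation does not obviously supply one either.

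This also affects your second step. With only such a $\pi$, the set $\pi^{-1}(X_{sing})$ need not be a divisor: a small resolution of a threefold ordinary double point has a curve as exceptional locus. So there is no ``exceptional divisor'' whose non-snc locus you can work on. Moreover, principalising the ideal of the non-snc locus is not the same as making the total transform snc; a single blow-up of an isolated non-snc point already principalises that ideal while the strict transform can remain singular. The correct move is an embedded resolution of the reduced closed set $\pi^{-1}(X_{sing})$ inside the regular threefold $\tilde X$, with centres over $X_{sing}$. You then blow up whatever regular curves and points remain among its components, which are by then in normal crossing position with the boundary. That yields a resolution in the paper's sense: smooth source, isomorphism over the regular locus, and $f^{-1}X_{sing}$ snc. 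But the composite is a blow-up sequence only after the initial morphism $\pi$, not from $X$ itself.
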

\begin{proof}
    See \cite{Cutkosky2004ResolutionOS}, \cite{Cossart2008ResolutionOS} and \cite{Cossart2009RESOLUTIONOS}.
\end{proof} 

For $\mathbb F=\mathbb Q,\mathbb R$, $\mathbb F$-divisors are the $\mathbb F$-linear combination of divisors and $\mathbb F$-linear equivalences between divisors are generated by $\mathbb F$-linear combination of linear equivalences. A similar definition applies for $\mathbb F$-Cartier divisors. $(X,B)$ is called a sub-pair if $X$ is a normal variety and $B$ is a $\mathbb Q$-divisor such that $K_X+B$ is $\mathbb Q$-Cartier and $B\leq 1$ (in coefficients).  A sub-pair $(X,B)$ is called a pair if $B\geq 0$. Here, $B$ is called the boundary of the sub-pair.

For an $\mathbb F$-divisor $M$, we often denote \[H^0(M):=H^0(X,\mathcal O_X(\lfloor M\rfloor))=\{f\in k(X)\mid\text{div}(f)+M\geq 0\}.\] The linear system of $M$ is defined as  \[|M|:=\Proj(H^0(M))=\{N\sim M\mid N\geq0\}.\] The $\mathbb F$-linear system is defined as \[|M|_{\mathbb F}:=\{N\sim_{\mathbb F}\mid N\geq0\},\]in particular \[|M|_{\mathbb Q}=\bigcup\limits_{m\in\mathbb N} \frac{1}{m}|mM|.\]The base locus $Bs(|M|)$ denotes the maximal closed subset of $X$ contained in each $N\in|M|$, the stable base locus is defined as \[Bs(|M|_{\mathbb Q}):=\bigcap\limits_{m\in\mathbb N} Bs(|mM|).\] If $|M|\neq\emptyset$, $|M|$ will define a rational map \[\phi_M:X\dashrightarrow |M|^\lor\simeq \mathbb P^n,\] which is determined on $X\setminus Bs(|M|)$ by mapping $x$ to the hyperplane in $H^0(M)$ consisting of sections whose corresponding divisors $N\sim M$ pass through $x$. 

Let $X$ be a variety, a b-divisor $\mathbf M$ on $X$ is a configuration of divisors $\mathbf M_Y$ on each projective birational model $Y$ over $X$ such that if $f:Z\to Y$ is a morphism of birational models over $X$, then $f_*\mathbf M_Z=\mathbf M_Y$. A b-divisor is said to be represented by $Y$ if $\mathbf M_Y$ is $\mathbb R$-Cartier and if $\mathbf M_Z=f^*\mathbf M_Y$ holds for any projective birational morphism $f:Z\to Y$. Usually we will use $M:=\mathbf M_X$ to represent the b-divisor $\mathbf M$ for convenience.

We say a b-divisor $\mathbf M$ is b-nef if it is represented by some model $Y$ and $\mathbf M_Y$ is nef. Suppose $X$ is a $\mathbb Q$-factorial surface and $\mathbf M$ is a b-nef b-$\mathbb Q$-divisor on $X$, then $M$ is nef. Indeed, if $\mathbf M$ is represented by some $Y$ and $C$ is any curve on $X$, then \[M\cdot C=f_*\mathbf M_Y\cdot C=\mathbf M_Y\cdot f^*C\geq0,\] which implies $M$ is nef. In general, a b-nef b-divisor $\mathbf M$ is not nef on $X$.

A generalized pair is given as $(X',B'+M')/Z$ where $X'$ is a normal variety with a projective morphism $X'\to Z$, $B'\geq0$ a $\mathbb Q$-divisor (usually $B'\leq 1$) on $X'$ and a b-$\mathbb Q$-Cartier $b$-$\mathbb Q$-divisor $M'$ represented by some projective birational morphism $\phi:X\to X'$ and a $\mathbb Q$-Cartier $\mathbb Q$-divisor $M$ on $X$ such that $M$ is nef over $Z$ and $M'=\phi_*M$ and $K_{X'}+B'+M'$ is $\mathbb Q$-Cartier. Since $M'$ is defined birationally, one may assume that $X\to X'$ is a log resolution. $M$ is viewed as a b-divisor in generalized pairs.

Suppose $D$ is a prime divisor on $X$, for any $\mathbb Q$-divisor $A$ on $X$ we define $\mu_D(A)$ to be the coefficient of $D$ in $A$. For a prime divisor $D$ on a log resolution $W/X$ of the (resp. sub-)pair $(X,B)$, let $K_W+B_W$ be the pullback of $K_X+B$, the log discrepancy of $(X,B)$ is defined as \[a(D,X,B):=1-\mu_D(B_W).\] For a birational map $\phi:(X,B)\dashrightarrow(Y,B_Y)$, we say $\phi$ is crepant (resp. sub-crepant) if 
\[a(E,X,B)=\text{(resp.}\geq\text{) }a(E,Y,B_Y)\]for every divisor $E$ over $X$.

One say the (resp. sub-)pair $(X,B)$ is (resp.sub-)lc (resp. klt, plt, canonical, terminal, $\epsilon$-lc) if $a(D,X,B)\geq 0$ (resp. $>0$, $>0$ for exceptional $D$, $\geq 1$ for exceptional $D$, $>1$ for exceptional $D$, $\geq\epsilon$) for every $D$ over $X$. A non-klt place of a sub-pair $(X,B)$ is a prime divisor $D$ on birational models of $X$ such that $a(D,X,B)\leq0$. A non-klt center is the image on $X$ of a non-klt place. A (resp.sub-)pair is (resp.sub-)dlt if it is lc and log smooth near generic points of non-klt centers.

For a generalized pair $(X',B'+M')$ and a divisor $D$ over $X$, take a sufficiently high resolution $f:X\to X'$ defining $M'=f_*M$ and contains $D$, we define $K_X+B+M:=f^*(K_{X'}+B'+M')$, and one can similarly define generalized version of lc, klt, plt, $\epsilon$-lc by considering the generalized log discrepancy \[a(D,X',B'+M'):=1-\mu_D(B).\]If $M=0$, these notions of singularities will coincide with the classical version.

We use standard results of the minimal model program (MMP), MMP in char $k>5$ up to dimension 3 is already fully known in \cite{KYH20lcmmp}:
\begin{theorem}[\cite{KYH20lcmmp}, 1.1]\label{MMP}
    Let $(X,B)/Z$ be a 3-dimensional lc pair over $k$ of char $>5$, $X\to Z$ be a projective contraction, then there is a minimal model program$/Z$ on $K_X+B$ such that:
    \begin{enumerate}
        \item If $K_X+B$ is pseudo-effective$/Z$, then the MMP terminates with a log minimal model$/Z$.
        \item If $K_X+B$ is not pseudo-effective$/Z$,then the MMP terminates with a Mori fiber space$/Z$.
    \end{enumerate}
\end{theorem}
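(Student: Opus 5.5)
This statement is quoted from \cite{KYH20lcmmp}, so in the paper it suffices to cite it. If I had to reconstruct the argument, I would reduce the log canonical case to the dlt case and then assemble the known pieces of the three-dimensional MMP in characteristic $p>5$.

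\textbf{Step 1: reduction to the dlt case.} Take a log resolution of $(X,B)$, which exists by the resolution theorem above. Then run a suitable relative MMP over $X$ to produce a $\mathbb Q$-factorial dlt modification $g\colon(Y,B_Y)\to(X,B)$ with $K_Y+B_Y=g^*(K_X+B)$. Two properties of $g$ let us work on $Y$ instead of $X$:
\begin{enumerate}
\item $K_Y+B_Y$ is pseudo-effective over $Z$ exactly when $K_X+B$ is.
\item A log minimal model or Mori fibre space for $(Y,B_Y)/Z$ yields one for $(X,B)/Z$, using the weak notions of models that are standard in the lc setting.
\end{enumerate}
So the task is to run a $(K_Y+B_Y)$-MMP over $Z$ with scaling of an ample divisor $A$ and to show it terminates.

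\textbf{Step 2: the individual steps of the MMP.} Each step needs three inputs.
\begin{enumerate}
\item The cone and contraction theorems for $\mathbb Q$-factorial dlt threefold pairs in characteristic $p>5$, as in Birkar and Hacon--Xu. These produce an extremal ray $R$ with $(K_Y+B_Y)\cdot R<0$ and $(K_Y+B_Y+\lambda A)\cdot R=0$, together with its contraction.
\item For a divisorial contraction, the $\mathbb Q$-factorial dlt condition is preserved.
\item For a flipping contraction, the flip exists. For klt pairs this is Hacon--Xu. In the dlt case one reduces to pl-flips via special termination and adjunction to components of $\lfloor B_Y\rfloor$, where one uses that surface log canonical singularities are tame for $p>5$ (Hacon--Xu, Birkar, Waldron).
\end{enumerate}

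\textbf{Step 3: termination.} The number of divisorial contractions is bounded by the Picard number. For flips I would use special termination: by adjunction, the flips eventually become isomorphisms near $\lfloor B_Y\rfloor$, because the surface MMP on each component of $\lfloor B_Y\rfloor$ terminates. Away from $\lfloor B_Y\rfloor$ the pair is klt, and termination of klt threefold flips with scaling follows from the finiteness of minimal models (Birkar--Waldron) or from the difficulty-function argument.

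If $K_Y+B_Y$ is not pseudo-effective over $Z$, the MMP with scaling must end with a negative extremal contraction of fibre type, which is a Mori fibre space. If it is pseudo-effective, the MMP ends with a log minimal model.

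\textbf{Main obstacle.} I expect the hardest part to be the existence and termination of flips for genuinely lc, non-klt pairs. This is where the lc abundance-type inputs of \cite{KYH20lcmmp} are needed. My first approach would be to follow Hashizume's strategy: run the MMP on the dlt model with scaling, and use the log minimal model for $K_Y+B_Y$ over $Z$ to prove termination.
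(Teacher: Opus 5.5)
The paper gives no argument beyond citing \cite{KYH20lcmmp}, so your opening sentence is exactly what the paper does. Your reconstruction, however, proves a weaker statement than the one quoted, and this is a genuine gap.

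The theorem is about an arbitrary lc pair, possibly neither $\mathbb Q$-factorial nor dlt, and asserts that a $(K_X+B)$-MMP \emph{starting from $X$} exists and terminates. In Step 1 you replace $X$ by a $\mathbb Q$-factorial dlt modification $g\colon Y\to X$ and run the MMP on $Y$. Those steps are not extremal contractions or flips of $X$. Their outcome is a log minimal model or Mori fibre space of $(X,B)$ only in the weak sense, since $X\dashrightarrow Y_{\min}$ may still extract the lc places extracted by $g$. So Step 1(2) gives existence of weak models, not an MMP on $K_X+B$. This also makes your plan internally inconsistent: after Step 1 you never need a flip of a genuinely lc pair, yet your last paragraph rightly names lc flips as the crux.

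To get the statement as written you need three lc-specific inputs on $X$ itself.

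(i) Cone and contraction theorems for lc threefolds. The cone theorem descends from $Y$ via $K_Y+B_Y=g^*(K_X+B)$ and the projection formula, but the contraction theorem does not. In the $\mathbb Q$-factorial dlt case you reduce to the klt case by subtracting $\delta\lfloor B\rfloor$, which keeps a given ray negative; for $(X,B)$ this is unavailable. On $Y$, take a supporting divisor $H$ of the ray. Then $g^*H-(K_Y+B_Y)=g^*(H-(K_X+B))$ is nef and big, but $(Y,B_Y)$ is not klt, and perturbing away $\lfloor B_Y\rfloor$ destroys nefness. So you need a base point free theorem for lc threefolds that controls the non-klt locus.

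(ii) Existence of lc flips. For a flipping contraction $\varphi\colon X\to X'$ you run the $(K_Y+B_Y)$-MMP over $X'$. The real issue is then to show that the resulting relatively nef and big $K_{Y_m}+B_{Y_m}$ is semi-ample over $X'$, so that $\Proj_{X'}\bigoplus_{m\geq0}\varphi_*\mathcal O_X(\lfloor m(K_X+B)\rfloor)$ exists. In the non-$\mathbb Q$-factorial setting, the same log canonical model also replaces any birational contraction whose target has $K+B$ not $\mathbb Q$-Cartier.

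(iii) Termination on $X$. Lift each step $X_i\dashrightarrow X_{i+1}$ to a non-empty sequence of dlt MMP steps on dlt models, run over the contraction base of that step. The output is again a $\mathbb Q$-factorial dlt model of $X_{i+1}$, so termination on $X$ follows from the dlt termination in your Step 3.

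Your Steps 2--3 are a fair sketch of the $\mathbb Q$-factorial dlt case, which is how the theorem is mostly used later in the paper. That case is a special case of the cited statement, not a proof of it.
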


A normal variety is $\mathbb Q$-factorial if every divisor is $\mathbb Q$-Cartier. For a generalized pair $(X',B',M')$ with data $\phi:X\to X'$, a $\mathbb Q$-factorial generalized dlt modification is a $\mathbb Q$-factorial generalized dlt generalized pair $(X'',B''+M'')$ with a projective birational morphism $\psi:X''\to X'$ under a log resolution $X\to X''$ such that $B''$ and $M''$ are pushdowns of $B$ and $M$, where \[K_{X''}+B''+M''=\psi^*(K_{X'}+B'+M')\] and every exceptional prime divisor of $\psi$ that appears in $B''$ has coefficient 1. 

Such model exists for generalized lc pairs in dimension$\leq 3$. In fact, suppose $(X,B+M)$ is the log resolution of $(X',B'+M')$, we run $K_X+B+E+M$-MMP over $X'$, where 
\[E=\sum\limits_{E_i \in\text{Exc}(\phi)}a(E_i,X',B'+M')E_i,\]
we get a minimal model $\mathbb Q$-factorial generalized dlt model $\psi:X''\to X'$ with $E$ contracted by negativity lemma, and hence $\psi^*(K_{X'}+B'+M')=K_{X''}+B''+M''$.

If $(X',B'+M')$ is generalized klt, then the modification $(X'',B''+M'')$ is a $\mathbb Q$-factorial generalized klt model and $\psi$ is a small morphism (i.e. no divisor is contracted or extracted), this is called a small $\mathbb Q$-factorial modification.

Let $X$ be a normal projective variety of dimension $d$, and let $D$ be a $\mathbb Q$-divisor on $X$. The Kodaira dimension $\kappa(D)$ (resp. the numerical Kodaira dimension $\kappa_\sigma(D)$) is defined as $-\infty$ if $D$ is not effective (resp. pseudo-effective), and otherwise as the largest integer $r$ such that \[\limsup_{m\to\infty}\frac{h^0(\lfloor mD\rfloor)}{m^r}>0.\] resp. for some very ample divisor $A$\[\limsup_{m\to\infty}\frac{h^0(\lfloor mD\rfloor+A)}{m^r}>0.\] We define the volume \[\text{vol}(D):=\limsup_{m\to \infty}\frac{h^0(\lfloor mD\rfloor)}{m^d},\] and say $D$ is big if $vol(D)>0$. $|\lfloor mD\rfloor|$ will define a rational map \[\phi_m=\phi_{\lfloor mD\rfloor}:X-\Bs(\lfloor mD\rfloor)\to \mathbb P^\vee(H^0(X,mD)).\] The dimension of the image of $\phi_m$ will stabilize to $\kappa(D)$. The stabilized rational fibration $\phi:X\dashrightarrow \phi(X)$ is called the Iitaka fibration. Suppose $D$ is semi-ample, then $\phi$ is a morphism by definition.

A generalized log Calabi-Yau fibration of varieties is a projective surjective morphism $f:(X',B'+M')\to Z$ between varieties such that $(X',B'+M')$ is a generalized lc generalized pair with $K_{X'}+B'+M'\sim_\mathbb Q0/Z$. The following property of generalized log Calabi-Yau fibrations will be frequently used in this paper.

\begin{lemma}[\cite{sdeglogcalabiyau}, Lemma 2.17]\label{cycrep}
    Suppose $(X',B'+M')/Z$ is a generalized log Calabi-Yau fibration and $\phi:(X',B'+M')\dashrightarrow(Y',B_Y'+M'_Y)$ is a birational map that does not extract any divisor with $B_Y'=\phi_*B'$, then $\phi$ is a crepant map.
\end{lemma}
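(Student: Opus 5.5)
The claim to prove is Lemma~\ref{cycrep}: a birational map $\phi$ of generalized log Calabi--Yau fibrations over $Z$ that extracts no divisor and satisfies $B'_Y=\phi_*B'$ preserves all log discrepancies. The approach is the negativity lemma on a common resolution. I will take it as implicit in the statement that $(Y',B'_Y+M'_Y)$ is also a generalized pair over $Z$ with $K_{Y'}+B'_Y+M'_Y\sim_\mathbb Q0/Z$ and with the same nef part $\mathbf M$. In the applications $\phi$ is a step of an MMP over $Z$, so this holds automatically.

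Take a common log resolution $p:W\to X'$, $q:W\to Y'$ that is high enough for $\mathbf M$ to be represented on $W$, so that $\mathbf M_W=M_W$ is nef over $Z$. Write
\[
p^*(K_{X'}+B'+M')=K_W+B_W+M_W,\qquad q^*(K_{Y'}+B'_Y+M'_Y)=K_W+B^Y_W+M_W,
\]
using the same $K_W$ and the same $M_W$ in both. Set $E:=B_W-B^Y_W$. Then
\[
E\sim_\mathbb Q p^*(K_{X'}+B'+M')-q^*(K_{Y'}+B'_Y+M'_Y)\sim_\mathbb Q 0/Z.
\]
In particular $E\equiv 0$ over $Y'$ and over $X'$.

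Next I determine where $E$ can be supported. Since $\phi$ extracts no divisor, every prime divisor on $Y'$ is the strict transform of a prime divisor on $X'$. Along such a divisor both coefficients equal its coefficient in $B'_Y=\phi_*B'$, so $E$ has coefficient $0$ there. Hence $E$ is $q$-exceptional, and $q_*E=0$.

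Now apply the negativity lemma to $q$. Both $E$ and $-E$ are $q$-nef and $q$-exceptional, so $E\le 0$ and $-E\le 0$, which gives $E=0$. Therefore $a(D,X',B'+M')=a(D,Y',B'_Y+M'_Y)$ for every divisor $D$ on $W$. Any divisor over $X'$ appears on some higher common resolution, so this equality holds for all divisors and $\phi$ is crepant.

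The main obstacle is only bookkeeping. One has to make sure the nef part on $Y'$ is the same b-divisor $\mathbf M$, which is what lets the $M_W$ terms cancel. Then the relative triviality of $E$ over $Y'$ is immediate, because both sides are pullbacks of divisors that are $\mathbb Q$-trivial over $Z$. No hypothesis on the characteristic is used.
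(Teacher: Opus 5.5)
Your proof is correct. The paper gives no argument of its own here, only the citation to Birkar--Qu, Lemma 2.17, and your negativity-lemma argument is exactly the standard proof of that lemma: $E=p^*(K_{X'}+B'+M')-q^*(K_{Y'}+B'_Y+M'_Y)$ is $q$-exceptional, because $\phi$ extracts no divisor, $B'_Y=\phi_*B'$ and $M'_Y=\mathbf M_{Y'}$, and it is $\equiv 0/Y'$, hence $E=0$.

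The extra hypothesis $K_{Y'}+B'_Y+M'_Y\sim_{\mathbb Q}0/Z$ that you imposed is automatic, and in fact not needed. Write $f\colon X'\to Z$, $g\colon Y'\to Z$ and $K_{X'}+B'+M'\sim_{\mathbb Q}f^*L$. Pushforward along a map that extracts no divisor preserves $\mathbb Q$-linear equivalence, so $K_{Y'}+B'_Y+M'_Y\sim_{\mathbb Q}g^*L$. In any case $p^*(K_{X'}+B'+M')\sim_{\mathbb Q}(g\circ q)^*L$ is already $\equiv 0/Y'$, which is all your argument uses.
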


\subsection{Varieties of Fano type}
A contraction is a projective surjective morphism $f:X\to Y$ between schemes such that $f_*\mathcal O_X=\mathcal O_Y$, or equivalently the integral part of the Stein factorization of $f$ is trivial. For Stein factorization, see Section 3. Let $(X,B)$ be a pair with a contraction $X\to Z$, we say $(X,B)$ is log Fano (resp. weak log Fano) over $Z$ if $-(K_X+B)$ is ample (resp. nef and big) over $Z$. We say a pair $(X,\Delta)$ is of Fano type over $Z$ if there exists a boundary $B$ such that $(X,\Delta+B)$ is a klt weak log Fano pair over $Z$, or equivalently, if there exists a boundary $\Gamma$ such that $(X,\Delta+\Gamma)$ is klt, $\Gamma$ is big over $Z$, and $K_{X}+\Delta+\Gamma\sim_{\mathbb{Q}}0/Z$.

Suppose $f:X\to Y$ is a birational contraction and $(X,B)$ is of Fano type, then $(Y,B_Y=f_*B)$ is of Fano type as the pushforward of a big divisor is big. Suppose $(X,B)\dashrightarrow(Y,B_Y)$ is a sub-crepant birational map, then $(Y,B_Y)$ is of Fano type will imply that $(X,B)$ is of Fano type. Hence taking crepant resolutions, running MMP and taking $\mathbb Q$-factorial dlt models will keep the property of Fano type.

Let $X$ be a variety $\mathbb Q$-factorial of Fano type and suppose $(X,\Delta)$ is klt and $K_X+\Delta\sim_{\mathbb Q}0$. Let $D$ be a $\mathbb Q$-divisor on $X$, then for $\epsilon\ll 1$, we have \[\epsilon D\sim_{\mathbb Q} K_X+\Delta+\epsilon D\sim_{\mathbb Q}K_X+(1-n\epsilon)\Delta +n\epsilon \Delta+\epsilon D.\] Since $\Delta$ is big, we have $\Delta\sim_{\mathbb Q} B+A$ for some effective $\mathbb Q$-divisor $B$ and ample $\mathbb Q$-divisor $A$ on $X$, then we can always find some $n$ large enough such that there is some $ H\sim_{\mathbb Q} nA$ and $H+D>0$. So 
\[\epsilon D\sim_{\mathbb Q}K_X+(1-n\epsilon)\Delta +n\epsilon \Delta+\epsilon D\sim_{\mathbb Q}K_X+(1-n\epsilon)\Delta+ n\epsilon B+\epsilon (H+D).\]
Since $(X,\Delta)$ is klt, for $\epsilon\ll 1$, we always have $(X,(1-n\epsilon)\Delta+ n\epsilon B+\epsilon (H+D))$ is klt. Hence we can always run $\epsilon D$-MMP on $X$ which will terminate to get some good models by \Cref{MMP}. Moreover, in the outcome, if $D$ is a nef divisor on a threefold of Fano type of $\chara>5$, then $D$ is semi-ample by base-point free theorem \cite[1.2]{birkar2014existencemorifibrespaces}.

\subsection{Complements}

Now we introduce the terminology of complements, which is introduced by Shokurov in \cite{3foldflip}. Let $(X',B'+M')/Z$ be a generalized pair, set $T':=\lfloor B'\rfloor$ and $\Delta':=B'-T'$. An $n$-complement of $K_{X'}+B'+M'$ over $z\in Z$ is of the form $K_{X'}+B'^++M'$ such that over some neighbourhood of $z$, $(X,B'^++M')$ is generalized lc, $nM$ is b-Cartier, and \[n(K_{X'}+B'^++M')\sim 0,nB'^+\geq nT'+\lfloor (n+1)\Delta'\rfloor.\] Moreover if $(X',B'+M')$ is generalized klt, then we say $B^+$ is a klt $n$-complement. We say a complement $B^+$ is monotonic if $B^+\geq B$. We recall that monotonic relative $n$-complements exists in large characteristics for Fano type fibrations.

\begin{lemma}[\cite{complement1}, Theorem 1.7]\label{relacomp}
    Assume $(X,B)$ is a 3-dimensional projective lc pair, $f:X\to Z$ is a contraction with $3>\dim Z>0$, let $R\subset [0,1]$ be a finite set of rational numbers, then there is some natural number $n=n(R)\in\mathbb N$ a prime number $p_0=p_0(R)$ such that suppose:
    \begin{enumerate}
        \item $\chara k>p_0$
        \item $B\in\Phi(R)$,
        \item $X$ is of Fano type$/Z$,
        \item $-(K_X+B)$ is nef$/Z$.
    \end{enumerate}
    Then, for any $z\in Z$, there is an $n$-complement $K_X+B^+$ of $K_X+B$ over $z$ with $B^+\geq B$.
    
\end{lemma}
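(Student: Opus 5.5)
The plan is to follow the Prokhorov--Shokurov--Birkar strategy for relative complements and isolate the places where characteristic zero tools are used. Each of those will be replaced by an argument valid for $p>p_0(R)$. Since $X$ is of Fano type over $Z$, there is a klt boundary that is big over $Z$, so by the discussion after \Cref{MMP} we may run MMPs freely over $Z$. First take a $\mathbb Q$-factorial dlt modification. Then run a $-(K_X+B)$-MMP over $Z$; it terminates because $X$ is of Fano type and $\chara k>5$, and $-(K_X+B)$ becomes semi-ample over $Z$ by the base-point free theorem \cite[1.2]{birkar2014existencemorifibrespaces}. By \Cref{cycrep}-type crepancy arguments, an $n$-complement on the resulting model pulls back to an $n$-complement on $X$. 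Monotonicity $B^+\ge B$ is preserved because the map only contracts divisors on which $-(K_X+B)$ is negative. We may therefore assume $-(K_X+B)$ is semi-ample$/Z$ and let $X\to V/Z$ be the associated contraction, with $K_X+B\sim_\mathbb Q$ a pullback from $V$.

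We split into two cases according to whether $\dim V>\dim Z$ or $V\to Z$ is birational. In the first case we use a canonical bundle formula $K_X+B\sim_\mathbb Q g^*(K_V+B_V+M_V)$. Here $\dim V-\dim Z\le 1$ because $0<\dim Z<3$. The relative-dimension-one situation is where we need the general fibre to be $\mathbb P^1$ (true for $p\gg0$) and the moduli part to be b-nef with $pM$ b-Cartier for a bounded $p$. Granting this, $B_V$ has coefficients in a hyperstandard set $\Phi(R')$ with $R'$ depending only on $R$. We then produce an $n$-complement on the curve or surface $V$ over $z$ by the lower-dimensional theory: for curves this is elementary, and for surfaces it is a known result in any characteristic. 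Finally we pull it back to $X$, and the pullback is automatically an $n$-complement with $B^+\ge B$ once $n$ is divisible by the index of $M_V$.

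In the birational case, $-(K_X+B)$ is big over $Z$ near $z$. We then perturb, following the standard route, so as to create a non-klt centre over $z$. Concretely, we add a general member $\Gamma$ of $|{-(K_X+B)}|_\mathbb Q$ passing through the fibre and take the lc threshold. After another dlt modification and MMP we may assume the pair has a plt-like divisor $S$ with $\lfloor B\rfloor\supseteq S$ over $z$, and that $-(K_X+B)$ is still nef and big over $Z$. Adjunction $(K_X+B)|_{S}=K_S+B_S$ gives coefficients in $\Phi(R)$ (by the usual hyperstandard computation) on a surface $S$ of Fano type over its image. By the surface case we get an $n$-complement $K_S+B_S^+$. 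It remains to lift it to $X$ using the exact sequence
\[0\to \mathcal O_X(-nK_X-nT-\lfloor(n+1)\Delta\rfloor - S)\to \mathcal O_X(\cdots)\to \mathcal O_S(\cdots)\to 0,\]
which requires the vanishing of $R^1f_*$ of the left-hand sheaf.

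This lifting step is the main obstacle, since Kawamata--Viehweg vanishing fails in characteristic $p$. I would first try to show that the relevant pairs $(X,B)$ near the fibre over $z$ form a bounded family over $\Spec\mathbb Z$. For this we would use boundedness of the coefficients in $\Phi(R)$ together with the fact that only finitely many $n$ need to be considered, with $n=n(R)$ fixed in advance from the characteristic zero theory. We would then argue that vanishing holds on the generic fibre of this family over $\Spec\mathbb Z$ by characteristic zero KV vanishing, so by semicontinuity it holds for all $p>p_0$. If boundedness of the total space is not available, the fallback is to use Hacon--Witaszek type vanishing for threefolds in large characteristic for klt pairs with big-and-nef divisors. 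That vanishing has a threshold on $p$ which, given the finite coefficient set, depends only on $R$. Either way the prime $p_0(R)$ is the maximum of the thresholds for the MMP ($>5$), for the general fibres being $\mathbb P^1$ in the canonical bundle formula, and for the vanishing used in lifting.
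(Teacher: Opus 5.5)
The paper does not prove this lemma; it is imported from \cite[Theorem 1.7]{complement1}. Your outline therefore has to stand on its own as a proof of that theorem, and it does not. It is Birkar's characteristic-zero scheme with the one genuinely characteristic-$p$ step left open: lifting the complement from the non-klt divisor $S$, i.e.\ the vanishing of $R^1f_*\mathcal O_X(-nK_X-nT-\lfloor(n+1)\Delta\rfloor-S)$ near $z$. Neither of your routes closes it.

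Route (i) needs $(X,B)$ near $X_z$, together with this sheaf, to lie in a bounded family over $\Spec\mathbb Z$. That is false, because $(X,B)$ is only lc and $X$ only of Fano type over an arbitrary curve or surface. Already $X=\mathbb P(1,1,m)\times Z\to Z$ with $B=0$, $m\in\mathbb N$, satisfies all the hypotheses and is unbounded; the special fibres over $z$, the perturbation and the dlt models only make it worse. Complement theory exists precisely to avoid boundedness of the underlying varieties. For the same reason $n$ cannot be ``fixed in advance from the characteristic zero theory'': nothing transports the characteristic-zero $n(R)$ to characteristic $p$ without exactly such a specialization argument.

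Route (ii) invokes a Kawamata--Viehweg vanishing for klt threefolds, relative to a base of dimension $1$ or $2$, with a threshold depending only on $R$. You do not identify such a theorem, and it cannot be taken for granted. Kawamata--Viehweg vanishing for nef and big divisors already fails on smooth rational surfaces in positive characteristic, and here it would have to hold across arbitrarily degenerate, possibly non-reduced, fibres over $z$. A uniform $p_0(R)$ for the lifting step is the real content of the theorem, not an input to it.

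Separately, the case division is wrong as written. With $X\to V\to Z$ defined by $-(K_X+B)$:
\begin{itemize}
\item ``$V\to Z$ birational'' means $K_X+B\sim_{\mathbb Q}0/Z$, where $-(K_X+B)$ is not big over $Z$, contrary to what you assert.
\item ``$\dim V>\dim Z$'' contains the big case $\dim V=3$. There the canonical bundle formula is vacuous and $V$ is not a curve or surface, so your claim $\dim V-\dim Z\le 1$ fails when $\dim Z=1$.
\end{itemize}
The correct split is big versus non-big over $Z$. In particular the case $\dim V=\dim Z=1$ is covered by neither branch. This is a relative-dimension-two log Calabi--Yau fibration whose generic fibre is a surface of Fano type over the imperfect field $k(Z)$. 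The canonical bundle formula you grant is only the relative-dimension-one one, which is also the setting in which this paper invokes \cite[Theorem 3.4]{complement1}; your second branch assumes bigness. That case can be fed into the lifting route (create the non-klt centre with $tf^*H$, $z\in H$, and use the Fano type boundary for positivity), but then it too rests on the missing vanishing.

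Finally, once you add $t\Gamma$ the boundary is no longer in $\Phi(R)$. So ``adjunction gives coefficients in $\Phi(R)$'' holds only if you keep the original $B$ separate from the perturbation, as in Birkar's lifting argument. The lifted $B^+$ must satisfy $nB^+\ge nT+\lfloor(n+1)\Delta\rfloor$ and $B^+\ge B$ for the original $B$, not for $B+t\Gamma$.
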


\subsection{Bounded families of schemes}
In this subsection, we introduce the notion of bounded families of schemes. A couple $(X,D)$ consists of a reduced scheme $X$ and a reduced divisor $D$ on $X$. Isomorphisms between couples are isomorphisms between the schemes such that the morphism is compatible and onto for boundaries. A set $\mathcal P$ of couples of projective varieties is birationally bounded (resp. bounded) over a scheme $S$ if there exist a couple $(\mathfrak X,\mathfrak D)$ and projective surjective morphisms $\mathfrak X\to \mathfrak T$ of reduced schemes of finite type over $S$. For each $(X,D)\in\mathcal P$ defined over a field $k$, there is a point 
\[\Spec(k)\to\mathfrak T,\] 
and a birational map (resp. isomorphism) $\phi:\mathfrak X_t\dashrightarrow X$ (resp. $\phi:\mathfrak X_t\to X$) such that the fiber $(\mathfrak X_t,\mathfrak D_t)$ of  $(\mathfrak X,\mathfrak D)\to \mathfrak T$ over $t$ is a couple of projective varieties over $t$ and $E\leq \mathfrak D_t$, where $E$ is the sum of the strict transform of $D$ and the reduced exceptional divisor of $\phi$. 

A set $\mathcal R$ of pairs of projective varieties $(X,B)$ is said to be log birationally bounded (resp. log bounded) over $S$ if the set of couples $(X,\text{Supp}B)$ is birationally bounded (resp. bounded) over $S$. If $B=0$, we simply say that the set of $X$ is birationally bounded (resp. bounded) over $S$. If $S=\Spec(k)$ is the base field, we simply say that $\mathcal R$ is log birationally bounded (resp. log bounded). A bounded family over $\Spec \mathbb Z$ usually implies that varieties in this family which are defined over a field of sufficiently large characteristic will admit  bounded good modifications. Explicitly speaking, we have the following propositions on geometric normality and bounded log resolution of singularities for a bounded family of varieties.

\begin{proposition}[{\cite[2.6]{complement1}}]\label{genorm}
    Suppose $\mathcal P$ is a bounded family over $\Spec\mathbb Z$ of projective varieties. Suppose a component $X_0\subset X\in\mathcal P$ is a normal variety defined over $k$, then there is $p_0=p_0(\mathcal P)$ such that if $\chara k>p_0$, $X_0$ is geometrically normal.
\end{proposition}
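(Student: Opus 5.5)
The plan is to prove the statement by a Noetherian induction on the parameter space of the bounded family. The only way a normal variety over a field $k$ can fail to be geometrically normal is through inseparability phenomena, and these can only occur over finitely many primes of $\Spec\mathbb Z$. The characteristic-zero members of the family supply the required uniformity. Concretely, fix $(\mathfrak X,\mathfrak D)\to\mathfrak T$ witnessing boundedness of $\mathcal P$ over $\Spec\mathbb Z$. For $X\in\mathcal P$ defined over $k$ there is a point $t:\Spec k\to\mathfrak T$ with $\mathfrak X_t\simeq X$, and $X_0$ is an irreducible component of $\mathfrak X_t$. Geometric normality of $X_0$ means normality of $X_0\otimes_k\bar k$. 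Since normality can be tested after a field extension, it is enough to understand the fibre of a suitable family over the image point of $t$, base-changed to $\bar k$.

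\textbf{Step 1: stratification.} Replace $\mathfrak T$ by a finite disjoint union of integral, locally closed subschemes $T_i$, and replace $\mathfrak X$ by its reduced preimages. For each $i$, let $\xi_i$ be the generic point of $T_i$, and consider the geometric generic fibre $\mathfrak X_{\bar\xi_i}$. Its irreducible components $Y_{ij}$, their normalizations $Y_{ij}^\nu$, and the finite maps $Y_{ij}^\nu\to Y_{ij}$ are all defined over some finite extension $L_i/\kappa(\xi_i)$. Spread them out over a quasi-finite dominant $T_i'\to T_i$ with function field $L_i$. After shrinking $T_i$ to a dense open, the following properties, all constructible in the base by EGA IV 9.7--9.9, hold fibrewise:
\begin{itemize}
\item the fibres of the spread-out $\mathcal Y_{ij}\to T_i'$ are geometrically integral;
\item the fibres of $\mathcal Y_{ij}^\nu\to T_i'$ are geometrically normal;
\item the maps $\mathcal Y_{ij}^\nu\to\mathcal Y_{ij}$ are finite and birational on every fibre.
\end{itemize}
Each of these properties holds at $\bar\xi_i$ by construction. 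Note that geometric normality of $Y^\nu_{ij}$ over $L_i$ is automatic when $\operatorname{char}\kappa(\xi_i)=0$, and holds by construction after passing to $\bar\xi_i$ in general. Applying Noetherian induction to the closed complement of the open sets obtained, we get finitely many strata after finitely many steps.

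\textbf{Step 2: choice of $p_0$.} Each stratum $T_i$ either dominates $\Spec\mathbb Z$, so that $\xi_i$ has characteristic $0$, or maps to a single closed point $(p_i)$. Take $p_0$ larger than every $p_i$ of the second kind. If $\chara k>p_0$, then $t$ lands in a stratum $T_i$ whose generic point has characteristic $0$. The key extra point is that over such a stratum I can additionally arrange, again by constructibility, that the fibres are geometrically reduced, since this holds at $\xi_i$ because $\kappa(\xi_i)$ is perfect. I can also arrange that the components of fibres over $\bar k$ correspond exactly to the $\mathcal Y_{ij,\bar t}$.

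\textbf{Step 3: comparison with $X_0$.} Now $X_0\otimes_k\bar k$ is reduced, and it is a union of some of the $\mathcal Y_{ij,\bar t}$. Each of these is integral, with finite birational normalization $\mathcal Y^\nu_{ij,\bar t}$ that is normal. Since $X_0$ is normal, its normalization over $k$ is trivial. The decisive point, and the main obstacle I expect, is that normalization commutes with this base change. In general, formation of the normalization does not commute with inseparable base change, which is exactly how normal but non-geometrically-normal varieties arise. Here I would argue as follows. Consider the conductor of $\mathcal Y_{ij}^\nu\to\mathcal Y_{ij}$ restricted to the fibre over $t$, and descend via the Galois closure of $T'_i\to T_i$. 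This shows that the cokernel $\nu_*\mathcal O/\mathcal O$ over $\bar k$ descends to a coherent sheaf supported on a closed subset of $X_0$ that is a non-normal locus of $X_0$. Hence this cokernel vanishes, and $X_0\otimes\bar k$ coincides with the disjoint union of its normalized components, which are normal. If the descent argument becomes awkward, the fallback is to shrink the strata further so that the $\mathcal Y_{ij}\to T_i'$ are flat with $S_2$ and $R_1$ fibres. This is again a constructible condition that holds at the characteristic-$0$ generic point. In that case Serre's criterion gives geometric normality directly, since $X_0$ is one of these fibres after base change. This completes the plan.
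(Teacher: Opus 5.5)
The paper does not prove this proposition itself; it is quoted from \cite[2.6]{complement1}, so there is no internal argument to compare yours with. Your route is sound and self-contained. You stratify $\mathfrak T$ by Noetherian induction and observe that for $\chara k>p_0$ the point $t$ must lie on a stratum whose generic point has characteristic $0$. You then spread out the geometric generic components together with their normalizations. You also correctly identify the real issue: normalization of the fibres must commute with base change.

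Step~3 becomes a complete argument once it is phrased precisely, as follows.
\begin{enumerate}
\item On a characteristic-$0$ stratum $L_i/\kappa(\xi_i)$ is separable. After shrinking, $T_i'\to T_i$ (or its Galois closure) is therefore finite \'etale.
\item The spread-out $\coprod_j\mathcal Y^\nu_{ij}$ then descends to a finite morphism $\mathcal W_i\to(\mathfrak X|_{T_i})_{\mathrm{red}}$ over $T_i$. On every fibre this map is finite and birational with geometrically normal source. So it is the normalization of each fibre, and its formation commutes with every field extension.
\item At $t$, the part $W_0\to X_0$ lying over $X_0$ is a finite birational map from a normal scheme onto the normal variety $X_0$. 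It is therefore an isomorphism, and $X_0\otimes_k\bar k\cong W_0\otimes_k\bar k$ is normal.
\end{enumerate}
Note that the descent is from $k^s$, not from $\bar k$. For imperfect $k$ one cannot in general descend the normalization from $\bar k$; that failure is exactly what produces normal but not geometrically normal varieties. This is why the \'etaleness of $T_i'\to T_i$ is essential, and your phrase ``descends over $\bar k$'' should be read this way.

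Your fallback is not valid as stated. You propose shrinking so that the fibres of $\mathcal Y_{ij}\to T_i'$ are $R_1$ and $S_2$. That is possible only for those $j$ with $Y_{ij}$ normal. If some geometric generic component is non-normal, which the family is allowed to have, the condition fails at $\bar\xi_i$ and cannot be spread out. You then still have to exclude components of $X_0\otimes\bar k$ that specialize from such $Y_{ij}$. The reason they are excluded is that $\mathcal Y^\nu_{ij,t'}\to\mathcal Y_{ij,t'}$ remains a non-trivial finite birational map from a normal scheme. Such components are therefore non-normal already over $k^s$ and cannot occur in a normal $X_0$. That is the normalization argument again, so the fallback is not an independent proof.

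Also, $X_0\otimes\bar k$ is a union of several such fibres, not ``one of these fibres''. Either drop the fallback or state it with this normal/non-normal dichotomy built in.
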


\begin{proposition}[Generic resolution of singularity over $\mathbb Z$]\label{genres}
    Let $(X,D)$ be a couple of finite type and dominant over $\Spec\mathbb Z$. Then there is an open subset $U\subset \Spec \mathbb Z$ and a projective birational morphism $\phi:X'\to X|_U:=X\times_{\Spec \mathbb Z} U$, such that $(X', D^\sim+\text{Exc}(\phi))\to U$ is log smooth, i.e. every irreducible stratum of $(X',D^\sim+\text{Exc}(\phi))$ is smooth over $U$, where $D^\sim$ denotes the strict transform of $D$.
\end{proposition}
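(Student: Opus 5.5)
The statement is a ``generic log resolution over $\mathbb Z$'': we must find a single projective birational modification of the arithmetic couple that becomes log smooth after shrinking $\Spec\mathbb Z$. The approach is to resolve on the generic fibre over $\Spec\mathbb Q$, where Hironaka applies, and then spread out. Since everything is of finite type, the construction is defined over an open subset of $\Spec\mathbb Z$, and log smoothness is an open condition that holds at the generic point.

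\textbf{Step 1 (generic fibre).} Let $\eta=\Spec\mathbb Q$ be the generic point of $\Spec\mathbb Z$, and set $X_\eta:=X\times_{\Spec\mathbb Z}\eta$ and $D_\eta:=D|_{X_\eta}$. Since $X$ is dominant over $\Spec\mathbb Z$ and reduced, $X_\eta$ is a nonempty reduced scheme of finite type over a field of characteristic zero, and it is dense in $X$ after discarding components not dominating $\Spec\mathbb Z$. Such components lie over finitely many primes, which we remove. By Hironaka's embedded resolution, there is a projective birational morphism $\phi_\eta:X'_\eta\to X_\eta$, obtained as a sequence of blow-ups along smooth centres, such that $X'_\eta$ is smooth and $D_\eta^\sim+\text{Exc}(\phi_\eta)$ is a simple normal crossing divisor. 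Over the perfect field $\mathbb Q$, this means every stratum is smooth over $\mathbb Q$.

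\textbf{Step 2 (spreading out).} Each blow-up centre $C_i\subset X_{i,\eta}$ is a closed subscheme cut out by finitely many equations with coefficients in $\mathbb Q$. There is therefore some $N$ such that all these data are defined over $\mathbb Z[1/N]$. Inductively, we take the scheme-theoretic closure of $C_i$ in the previous model over $U_0=\Spec\mathbb Z[1/N]$ and blow it up. This yields a projective birational morphism $\phi:X'\to X|_{U_0}$ whose generic fibre is $\phi_\eta$. Blow-up commutes with the flat base change $\eta\to U_0$, and birationality is preserved after possibly shrinking $U_0$. By generic flatness we also shrink $U_0$ so that $X'$, every $C_i$, and all strata of $D^\sim+\text{Exc}(\phi)$ are flat over $U_0$, and so that the closure of $\text{Exc}(\phi_\eta)$ equals $\text{Exc}(\phi)$ over $U_0$. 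The last condition holds because the exceptional locus of $\phi$ over $U_0$ differs from that closure only on a constructible set that is empty over $\eta$, and such a set lies over finitely many primes.

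\textbf{Step 3 (openness of log smoothness).} There are finitely many irreducible strata $W_j$ (intersections of components of $D^\sim+\text{Exc}(\phi)$), and each is of finite type over $U_0$. The locus where $W_j\to U_0$ is smooth is open in $W_j$, and it contains the whole generic fibre. Its complement is closed, and its image in $U_0$ is constructible and does not contain $\eta$, so it is contained in a finite set of primes. Removing these gives $U$. The same argument applied to the snc condition (transversality, i.e.\ smoothness of all intersections over $U$, and irreducible components remaining geometrically irreducible after further shrinking) completes the proof. Properness of $\phi$ ensures that images of closed sets in $X'$ are closed, which keeps the shrinking finite.

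The main obstacle is bookkeeping in Step 2: ensuring that the strata over $U$ are exactly the closures of the generic strata. Concretely, no new components of $D^\sim$ or $\text{Exc}(\phi)$ may appear in special fibres, and components must not split there, which would break the snc stratum structure. I would handle this first by requiring, via generic flatness, that every relevant subscheme be flat over $U$; flatness forces it to be the closure of its generic fibre. Second, I would use that geometric irreducibility of the generic fibres (after a finite extension we do not need, since smoothness over $U$ already gives what is required) spreads out to an open set of primes.
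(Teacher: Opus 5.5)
Your proposal is correct and is essentially the paper's argument: resolve the generic fibre over $\mathbb Q$, spread the blow-up centres out by taking closures over an open $U_0\subset\Spec\mathbb Z$, and shrink the base using openness of the smooth locus (the paper's ``generic smoothness''); your use of classical Hironaka with smooth centres, and your explicit handling of vertical components and of $\mathrm{Exc}(\phi)$, are if anything more careful than the paper's. The only inaccuracy is the parenthetical claim that components can be made geometrically irreducible by shrinking $U$, which is false in general (e.g.\ $\Spec\mathbb Z[i]\to\Spec\mathbb Z$); as you observe yourself, this is not needed, since the statement only asks that each irreducible stratum be smooth over $U$.
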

\begin{proof}
    Let $(X^0,D^0)$ be the generic fiber of $(X,D)$ over $\Spec\mathbb Z$, which is a couple of finite type over $\Spec\mathbb Q$. By \cite[Theorem 1.4.1]{Resolution} there is a sequence of (weighted) blow ups of reduced centers 
    \[X'^0:=X^0_n\to X^0_{n-1}\to \cdots \to X^0_0:=X^0\]
    such that $(X'^0,E'^0:=E^0_n)$ is log smooth over $\mathbb Q$, where $E_k^0$ is the sum of strict transformations of $D^0$ and the reduced exceptional divisors of $X_k^0/X^0$. Denote $U_0=\Spec\mathbb Z$ and $(X_0,E_0)=(X,D)$. We construct $(X_i,E_i)\to U_i$ by induction on $i$. Suppose $f_i:X_i\to U_i$ is constructed, for the blow-up $X^0_{i+1}\to X_{i}^0$ at a reduced center $F_i^0$, let $F_i$ denote  the closure of $F_i^0\subset X_i$ over $U_i$, we define the open subset $U_{i+1} \subset U_i$ to be $U_i-f_i(F_i^{\text{ver}})$, where $F_i^\text{ver}$ denotes the vertical components of $F_i$. We define \[(X_{U_{i+1}},E_{U_{i+1}}, F_{U_{i+1}}):=(X_i,E_i,F_i)|_{U_{i+1}}.\] Blow up $X_{U_{i+1}}$ along the center $F_{U_{i+1}}$ and we get a scheme $X_{i+1}\to U_{i+1}$ whose generic fiber is $X_{i+1}^0$ by construction. We define $E_{i+1}$ to be the sum of the strict tranformations of $E_{U_{i+1}}$ and the reduced exceptional divisors of $X_{i+1}/X_{U_{i+1}}$, which coincide with $E^0_{i+1}$ on the generic fiber. Hence we finally get $\phi:X_n\to X|_{U_n}/ U_n$ with $(X_n, E_n)$ generically log smooth. By generic smoothness there is an open subset $U\subset U_n$ such that $(X_n, \text{Exc}(\phi))$ is log smooth over $U$. We denote $X':=X_n\times_{U_n}U$ and the assertion follows as desired.
\end{proof}

\begin{proposition}\label{genres1}
    Suppose $\mathcal P$ is a bounded family over $\Spec\mathbb Z$ of couples of projective integral normal varieties. Then there is a bounded family $\mathcal S$ over $\Spec \mathbb Z$ of log smooth couples and a prime number $p_0$ depending only on $\mathcal P$, such that if $(X,D)\in \mathcal P$ is defined over a field $k$ with $\chara k>p_0$, then $(X,D)$ admits a log resolution $(X^s,D^s)\in\mathcal S$.
\end{proposition}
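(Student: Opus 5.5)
The plan is to spread the family $\mathcal P$ out over $\Spec\mathbb Z$, resolve it generically using \Cref{genres}, and then use Noetherian induction on the parameter space so that only finitely many strata, hence finitely many primes, have to be discarded.

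Since $\mathcal P$ is bounded over $\Spec\mathbb Z$, fix a couple $(\mathfrak X,\mathfrak D)$ with a projective surjective morphism $\mathfrak X\to\mathfrak T$ of reduced schemes of finite type over $\Spec\mathbb Z$. Every $(X,D)\in\mathcal P$ defined over $k$ is then isomorphic to a fibre $(\mathfrak X_t,\mathfrak D_t)$ over a point $t:\Spec k\to\mathfrak T$, with $D\le\mathfrak D_t$. I will stratify $\mathfrak T$ into finitely many locally closed, reduced, irreducible pieces $\mathfrak T_j$ so that, over each piece, the family behaves well enough fibrewise. The stratification should also track the components of $\mathfrak D$, so that every sub-divisor $D\le\mathfrak D_t$ is a union of components that spread out over the stratum.

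Fix one irreducible stratum $T=\mathfrak T_j$ and restrict the family over it. Its generic fibre $X_\eta$ lives over the function field $K(T)$. The first task is to replace $X_\eta$ by a geometrically normal and geometrically integral model. For this I pass to a finite extension of $K(T)$, which amounts to a generically finite base change $T'\to T$, and normalize. Boundedness will not be affected, since all of this is finite-type data over $\Spec\mathbb Z$.

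Next I apply \Cref{genres} to the total space $\mathfrak X_{T'}$ together with $\mathfrak D_{T'}$, viewed as a couple dominant over an open subset of $\Spec\mathbb Z$. If $T'$ does not dominate $\Spec\mathbb Z$, it lies over finitely many primes, and those primes are simply thrown into $p_0$. The resolution is only log smooth over an open $U\subset\Spec\mathbb Z$, which is not yet enough for the fibres over points of $T'$. To fix this, I run the proof of \Cref{genres} relative to $T'$: resolve the generic fibre over $K(T')$, spread the blow-up centres out, and use generic smoothness over $T'$. This yields a dense open $T'^0\subset T'$ over which $(\mathfrak X', \mathfrak D'^\sim+\mathrm{Exc})$ is log smooth over $T'^0$. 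Hence every fibre over a point of $T'^0$ is a log resolution of the corresponding $(\mathfrak X_t,\mathfrak D_t)$, and therefore also of $(X,D)$ after keeping only the strict transform of $D$ and the exceptional divisors. The complement $T'\setminus T'^0$ has smaller dimension, so I repeat the argument on it. By Noetherian induction this stops after finitely many steps, and $\mathcal S$ is the finite union of the resulting log smooth families.

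The main obstacle is checking that the fibrewise objects really are log resolutions, and this is where the hypothesis $\chara k>p_0$ enters. Three things must hold for the fibre over $t$:
\begin{enumerate}
\item the fibre of the resolved family is birational to $X$, i.e.\ it is integral and the blow-up centres restrict properly;
\item the morphism is an isomorphism over the regular locus of $X$;
\item the strict transform of $D$ on the fibre coincides with the fibre of the strict transform.
\end{enumerate}
For (1) I use \Cref{genorm}: for $p>p_0$ the fibres are geometrically normal, so they are not inseparable degenerations. For (2) and (3) I shrink $T'^0$ further, using that the regular and singular loci, and the components of $\mathfrak D$, are constructible in families. These opens can be chosen to contain every characteristic-$p$ point with $p$ large, because only finitely many vertical strata over closed points of $\Spec\mathbb Z$ are ever removed. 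Collecting the finitely many primes discarded along the way gives $p_0$, which depends only on $\mathcal P$.
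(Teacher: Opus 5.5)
Your proposal is correct and follows the paper's route. You restrict to irreducible pieces of $\mathfrak T$ dominating $\Spec\mathbb Z$, discarding the finitely many primes under the rest, resolve in characteristic zero via \Cref{genres}, apply generic smoothness over the parameter space to get a dense open set of log smooth fibres, and finish by Noetherian induction; your explicit fibrewise checks (birationality, strict transforms) fill in details the paper leaves implicit. The finite base change $T'\to T$ is unnecessary, since in characteristic zero a normal generic fibre is already geometrically normal and its resolution is smooth over $K(T)$, and it is better dropped: for members defined over a non-closed field $k$, a $k$-point of $T$ need not lift to $T'$.
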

\begin{proof}
    Let $(\mathfrak X,\mathfrak D)\to \mathfrak T/\Spec \mathbb Z$ be the bounded family of $\mathcal P$. Taking irreducible components of $\mathfrak T$ and ruling out the component of $\mathfrak T$ not dominating $\Spec \mathbb Z$, we may assume $\mathfrak T$ is irreducible and dominating $\Spec\mathbb Z$. Let $(\mathfrak X^s,\mathfrak D^s)\to \mathfrak T_U/U\subset\Spec\mathbb Z$ be the log smooth family constructed in \ref{genres}.   Then by generic smoothness \{\red{ref}\}, there is an open subset $\mathfrak V\subset \mathfrak T_U$ such that for any point $t\to \mathfrak V$, $(\mathfrak X^s_t,\mathfrak D^s_t)$ is log smooth. Hence for $t\in \mathfrak V$ corresponding to $(X,D)$, it admits a log resolution in the bounded family $(\mathfrak X^s_\mathfrak V,\mathfrak D^s_\mathfrak V)\to \mathfrak V/U\subset \Spec\mathbb Z$. Replacing $(\mathfrak X,\mathfrak D)\to \mathfrak T/\Spec \mathbb Z$ with $(\mathfrak X^s_{\mathfrak T_U\backslash\mathfrak V},\mathfrak D^s_{\mathfrak T_U\backslash\mathfrak V})\to  \mathfrak {T}_U\backslash\mathfrak V$ over $U\subset \Spec\mathbb Z$, the assertion follows by Noetherian induction.
\end{proof}

\subsection{Topological hyperplane sections}
Base-point free linear systems usually behave pathologically in positive characteristic. For example, let $L$ be a very ample line bundle on a variety $X$ over a field of characteristic $p>0$, and consider the base-point free sub-linear system $V=H^0(X,L)^p\subset H^0(X,L^p)$, we see every element in $|V|$ is not reduced. In this subsection, we introduce the technique of topological hyperplane sections in this subsection, which maximally avoids the pathological behaviors of base-point free linear systems.

\subsection*{Notations}
    We first introduce the settings we shall use in this paper. Let $K$ be a field (not necessarily algebraically closed), we denote $K^s$ (resp. $K^a$) to be the separable closure (resp. algebraic closure) of $K$. Suppose $X$ is a projective integral $K$-variety with $H^0(X,\mathcal O_X)=K$, $D$ is a Weil divisor on $X$, and $V\subset H^0(X,\mathcal O_X(D))$ is a $K$-linear subspace, the linear system associated to $V$ is defined as $|V|:=\mathbb P(V)$, which is a projective $K$-variety. We also define $|D|:=\mathbb P(H^0(X,\mathcal O_X(D)))$ to be the full linear system. We say  densely general elements (resp. general elements) of a linear system $\mathbb P(V)$ satisfy a property $\mathscr P$ if for a Zariski dense subset $U\subset \mathbb P(V)$ of $K$-points (resp. a non-empty open subset $U\subset \mathbb P(V)$), every effective Weil divisor $L\sim D$ corresponding to a $K$-point of $U$ satisfies $\mathscr P$.

    Let $f:X\to Z$ be a contraction between projective normal varieties over an algebraically closed field $k$, and let $i:Z\to \mathbb P_k^n$ be a closed immersion induced by a very ample line bundle $\mathscr L=i^*\mathcal O_{\mathbb P^n}(1)$. We denote $\Spec(k'):=\Spec(k(t_1,t_2,\cdots,t_n))$ to be the generic point of $\mathbb P(H^0(\mathbb P^n,\mathcal O_{\mathbb P^n}(1)))=(\mathbb P^{n})^\vee$, which is a purely transcendental extension of $k$. For any suffix $q=s,a,\prime,\cdots$ with a field extension $\Spec(k^q)\to\Spec(k)$ and any datum (schemes, divisors, line bundles and etc.) $\mathcal J/k$, we denote $\mathcal J^q/k^q$ to be the base change of the initial data along the field extension. 

\begin{definition}[Topological hyperplane sections]
    Let $k$ be a field and $f:X\to \mathbb P^n$ be a morphism of $k$-schemes, we define the topological extension of $k$ to be  
    \[k^\tp:=k(X_1,X_2,\cdots)^s=(\bigcup\limits_{n=1}^{\infty}k(X_1, X_2,\cdots,X_n))^s.\]
    For any couple $(X,D)$, we define its topologization to be $(X^\tp,D^\tp):=(X,D)\otimes_k k^\tp$. A topological hyperplane section of $f$ is a hyperplane section $(f^\tp)^*H^\tp\subset X^\tp$, where  $H^{\tp}\in ((\mathbb P^n)^\tp)^\vee$.
\end{definition}
The concept of topological extension is inspired by the following diagram of generic hyperplane section.
\[\begin{tikzcd}
	&& {\mathcal H_X} && {H'_X} && {H_X^\tp} \\
	X & {X\times(\mathbb P^n)^\vee} && {X'} && {X^\tp} \\
	&& {\mathcal H} && {H'} && {(H')^\tp} \\
	{\mathbb P^n} & {\mathbb P^n\times(\mathbb P^n)^\vee} && {(\mathbb P^n)'} && {(\mathbb P^n)^\tp} \\
	{\Spec (k)} & {(\mathbb P^n)^\vee} && {\Spec (k')} && {\Spec(k^\tp)}
	\arrow[from=1-3, to=2-2]
	\arrow[from=1-3, to=3-3]
	\arrow[from=1-5, to=1-3]
	\arrow[from=1-5, to=2-4]
	\arrow[from=1-5, to=3-5]
	\arrow[from=1-7, to=1-5]
	\arrow[from=1-7, to=2-6]
	\arrow[from=1-7, to=3-7]
	\arrow["p", from=2-1, to=4-1]
	\arrow[from=2-2, to=2-1]
	\arrow[from=2-2, to=4-2]
	\arrow[from=2-4, to=2-2]
	\arrow[from=2-4, to=4-4]
	\arrow[from=2-6, to=2-4]
	\arrow[from=2-6, to=4-6]
	\arrow["h"', from=3-3, to=4-2]
	\arrow[from=3-5, to=3-3]
	\arrow[from=3-5, to=4-4]
	\arrow[from=3-7, to=3-5]
	\arrow[from=3-7, to=4-6]
	\arrow[from=4-1, to=5-1]
	\arrow[from=4-2, to=4-1]
	\arrow[from=4-2, to=5-2]
	\arrow[from=4-4, to=4-2]
	\arrow[from=4-4, to=5-4]
	\arrow[from=4-6, to=4-4]
	\arrow[from=4-6, to=5-6]
	\arrow[from=5-2, to=5-1]
	\arrow[from=5-4, to=5-2]
	\arrow[from=5-6, to=5-4]
\end{tikzcd}\]
All squares in this diagram is Cartesian, $p$ is the composition of $f:X\to Z$ and $i:Z\to \mathbb P^n$, and $\mathcal H$ is the universal hyperplane section defined by 
\[\mathcal H:=(\sum\limits_{i=0}^ns_it_i=0)\subset \Proj(k[s_0,s_1,\cdots,s_n])\times\Proj (k[t_0,t_1,\cdots,t_n])\cong \mathbb P^n\times(\mathbb P^n)^\vee.\]
We see that $H_X^\tp$ is a topological hyperplane section of $X$, different choice of the isomorphism $\Proj (k[t_0,t_1,\cdots,t_n])\cong(\mathbb P^n)^\vee$ will give different topological sections. The terminology of such extension is named to be a "topologization" because for any scheme $X$ over a non-perfect field $K$, its separable closure $X^s$ will be homeomorphic to its algebraic closure $X^a$, however the geometry of $X^s$ and $X^a$ are quite different. We see that while the topological hyperplane section we have constructed has the same topological information with general geometric hyperplane sections, it also has the same algebro-geometric properties with the generic hyperplane section $H_X'$.
\begin{lemma}[Topologized Bertini]\label{topologicalbertini}
    Under the notations above, suppose $X\to Z$ is a contraction from a threefold to a surface over $k$, then we have:
    \begin{enumerate}
        \item $\dim_k |\mathscr L|=\dim_{k'}|\mathscr L'|=n$.
        \item For a general element $L$ of $|\mathscr L|$, $L$ is geometrically integral and geometrically normal curve. Moreover, assume general fibers of $X/Z$ are smooth curves, then $f^*L$ is integral and $f^*L\to L$ is a flat morphism. The conclusion is true when we replace $(f:X\to Z,L,\mathscr L)/k$ with $(f^q:X^q\to Z^q,L^q,\mathscr L^q)/k^q$ for arbitrary field extension $\Spec(k^q)\to\Spec(k)$.
        \item For a densely general element $L'$ of $|\mathscr L'|$, $f'^* L'$ is an integral normal $k'$-surface. Moreover, assume $(X,B)$ is a projective lc (resp. klt) pair, $(X',B'+f'^* L')$ is lc (resp. klt). The conclusion is true when we replace $(f':X'\to Z',L',\mathscr L')/k'$ with $(f^\tp:X^\tp\to Z^\tp,L^\tp,\mathscr L^\tp)/k^\tp$ for the topological extension $k\to k'\to k^\tp$.
    \end{enumerate}
\end{lemma}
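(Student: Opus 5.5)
Part (1) is a dimension count. Since $\mathscr L$ is very ample and $i:Z\to\mathbb P^n$ is the embedding it induces, the image of $H^0(\mathbb P^n,\mathcal O(1))$ in $H^0(Z,\mathscr L)$ has dimension $n+1$. Here I would take the convention that $i$ is given by the complete linear system (or restrict attention to the sublinear system cut out by hyperplanes). Flat base change along $k\to k'$ gives
\[
H^0(Z',\mathscr L')=H^0(Z,\mathscr L)\otimes_k k'.
\]
Hence the projective dimensions of $|\mathscr L|$ and $|\mathscr L'|$ agree, which gives (1).

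For (2) the plan is to apply the classical Bertini theorems over the algebraically closed field $k$ to the very ample system on the normal surface $Z$.
\begin{enumerate}
\item A general hyperplane section $L$ of $Z$ is irreducible, because $\dim Z=2$ and Jouanolou's Bertini applies to the unramified morphism $i$.
\item $L$ is reduced, and it is normal because $Z$ has isolated singularities and a general $L$ avoids them. A normal curve over $k=\bar k$ is regular, hence geometrically normal.
\item For $f^*L$, the fibres of $f$ are equidimensional of dimension $1$ away from finitely many points of $Z$, which a general $L$ avoids. So $f^{-1}(L)\to L$ is equidimensional over a regular curve with $X$ Cohen--Macaulay along it; I would either use miracle flatness or argue directly that a dominant morphism from an integral scheme to a Dedekind scheme is flat.
\item Integrality of $f^*L$ follows from Bertini on $X$ applied to the base-point-free system $f^*|\mathscr L|$. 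Its image is $2$-dimensional, so general members are irreducible, and they are generically reduced since the general fibres of $f$ are smooth curves.
\item For an arbitrary extension $k^q$, all these properties are geometric ones (geometrically integral, geometrically normal, flat), so they persist under base change.
\end{enumerate}

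For (3) I would work with the universal hyperplane $\mathcal H\to(\mathbb P^n)^\vee$ and its pullback $\mathcal H_X$. Both are projective bundles over $Z$ and over $X$ respectively, hence normal and integral. The generic fibre $H'_X=f'^*L'$ is a localization of $\mathcal H_X$, so it is integral and normal. For the lc/klt statement, take a log resolution $\pi:W\to X$ of $(X,B)$. The family $\mathcal H_W$ is likewise a projective bundle, so the generic fibre of $\pi^*\mathcal H$ is a regular divisor meeting every stratum of $\Supp B_W\cup\mathrm{Exc}(\pi)$ transversally. This is the generic-hyperplane Bertini, which is valid in every characteristic because $\mathcal H_W\to W$ is smooth. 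Consequently $(W',B_W'+\pi'^*H'_X)$ is log smooth with the same coefficients plus a reduced component. Its pushforward $K_{X'}+B'+f'^*L'$ has discrepancies equal to those of $(X,B)$ along $B$ and the exceptional divisors, together with a coefficient-one transversal divisor. This gives lc, or plt when $(X,B)$ is klt. In that case I expect the intended reading of ``klt'' for $(X',B'+f'^*L')$ to be plt, or klt after giving $f'^*L'$ a coefficient less than $1$.

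The passage to densely general $k$-points, and the extension from $k'$ to $k^{\tp}$, would go as follows. The step $k'\to k^{\tp}$ is a separable algebraic extension followed by a purely transcendental one. Both normality and regularity are preserved under separable base change, since such a change is geometrically reduced and étale-local. The log smoothness of $W'$ therefore survives, and with it the lc/klt conclusion.

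The main obstacle is the separability point: geometric normality of $H'_X$ over $k'$ can fail, and that is exactly why the argument passes through the separable closure rather than the algebraic closure. I would handle it by verifying that every property used (regularity of $W'$, snc of its boundary) is preserved by separable extensions, and nothing more is claimed.
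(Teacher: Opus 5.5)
Your arguments for (1) and (2) are fine. Over $k'$, your projective-bundle argument with $\mathcal H_W\to W$ is a legitimate self-contained replacement for the paper's citation of Tanaka's generic Bertini theorem. You are also right that ``klt'' for $(X',B'+f'^*L')$ can only mean plt.

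\textbf{Integrality over $k^{\tp}$.} The gap is in the passage to $k^{\tp}$. The statement requires $(f^{\tp})^*L^{\tp}=H'_X\otimes_{k'}k^{\tp}$ to be an \emph{integral} normal surface. You only check that regularity, normality and the snc condition survive the separable extension $k^{\tp}/k'$, and you say explicitly that nothing more is claimed. Integrality does not survive such an extension in general. The field $k^{\tp}$ contains the separable closure of $k'$, so if $k'$ were not separably closed in $K(H'_X)$, the base change would be a disjoint union of several normal components.

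Your proof that $H'_X$ is integral, as a localization of the integral scheme $\mathcal H_X$, never uses $\dim Z=2$. It would go through verbatim for a fibration over a smooth curve $Z\subset\mathbb P^n$ of degree $d\geq 2$. There the generic hyperplane meets $Z$ in a single closed point of degree $d$, separable over $k'$, and $H'_X\otimes_{k'}k^{\tp}$ splits into $d$ disjoint pieces.

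The missing input is Bertini irreducibility in its generic form (Jouanolou). Since $X$ is irreducible over $k=\bar k$ and $\dim f(X)=2$, the generic hyperplane section $H'_X$ is \emph{geometrically} irreducible over $k'$. Alternatively, this follows from the irreducibility of general closed members that you already use in (2), because geometric irreducibility of fibres is a constructible condition. With this, $H'_X\otimes_{k'}k^{\tp}$ is irreducible and reduced, hence integral. This is exactly the step the paper takes when it deduces that $(f^{\tp})^*L^{\tp}$ is irreducible from the geometric irreducibility of $f^*L$.

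\textbf{Densely general points.} Part (3) concerns a \emph{densely general} $k'$-point of $|\mathscr L'|$, but you only treat the single tautological point $H'$. You announce a passage to densely general points and never carry it out. It is easy to supply. Compose the generic point with the $k$-automorphisms $t\mapsto At+b$ of $k'=k(t_1,\dots,t_n)$, where $A\in\mathrm{GL}_n(k)$ and $b\in k^n$. This gives a Zariski-dense set of $k'$-points whose fibres, together with $X'$, $B'$ and the log resolution, are isomorphic as schemes to those over $H'$. All the properties therefore transfer. As written, though, the proposal proves the statement only for the generic hyperplane over $k'$. Over $k^{\tp}$ it proves normality and the lc/plt assertion, but not integrality.
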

\begin{proof}
    For $(1)$, we see that $|\mathscr L|=\mathbb P^n$ is of dimension $n$. Since $k\to k'$ is a purely transcendental extension, $H^0(Z,\mathscr L)\otimes _k k'$ is an $n+1$-dimensional $k'$-linear subspace of $H^0(Z', \mathscr L')$. Hence $\dim_{k'}|\mathscr L'|\geq n$. On the other hand, we see \[H^0(Z', \mathscr L')\otimes_{k'}\overline{k'}\subset H^0(Z_{\overline{k'}},\mathscr L_{\overline {k'}}).\] We see that $\mathscr L_{{k'}^a}$ induce a closed embedding $Z_{{k'}^a}\to \mathbb P^n_{{k'}^a}$ of hyperplane sections, which implies $\dim_{{k'}^a}H^0(Z_{{k'}^a},\mathscr L_{{k'}^a})=n+1$, hence $H^0(X_{k'},\mathscr L_{k'})=\mathbb A^{n+1}_{k'}$ as desired.

    For $(2)$, the statement about $\mathscr L$ is just the Bertini theorem for hyperplane sections, which holds for arbitrary infinite fields; cf. \cite[Theorem~3.4.14]{joinsandintersections}. As $X\to Z$ is a contraction, general fibers are irreducible,
    hence the geometrical irreducibility of $f^*L$ follows from \cite[6.10.3]{Bertini}. Now assume general fibers are smooth curves. Since $X/Z$ has geometrically reduced general fibers, we see for general sections in $|\mathscr L|$, the pull-back of such sections to $X$ are again reduced.
    So we have $f^*L$ is a geometrically integral surface over $k$ and $L$ is a smooth curve for general sections. By Bertini for Serre conditions (cf. \cite[Theorem~3.4.5]{joinsandintersections}),
    we have $f^*L$ is Cohen Macaulay for general $L$, so $f|_L:f^*L\to L$ is an equi-dimensional morphism from a Cohen Macaulay integral $k$-scheme to a smooth curve, which implies that $f|_L$ is flat by miracle flatness.
    For the conclusion under field extensions, since $f^*L\to L$ is defined over an algebraically closed field and the properties in the argument is stable under purely transcendental extensions, the properties will also hold under arbitrary field extensions.

    For $(3)$, the $k'$ part of the argument is simply \cite[Theorem 5.4]{Bertinibasechange}. We see $f^*L$ is geometrically irreducible by $(2)$, hence $(f^\tp)^*L^\tp$ is irreducible. The left properties in the argument are stable under base changing along separable extensions and purely transcendental extensions
    of the base field, hence the argument holds for $(f^\tp)^*L^\tp$.
\end{proof}
\begin{remark}
    Keep the notations in \Cref{topologicalbertini}, in general, better regularity than reducedness for general $f^*L$ needs more assumptions on $X/Z$. For example, one can prove that suppose $X\to Z$ is a contraction from a threefold to a surface with the following assumptions:
    \begin{enumerate}
        \item general fibers of $X/Z$ are irreducible smooth curves,
        \item for every codimension-$1$ point $\alpha\in Z$ and every generic point $\beta$ of $f^{-1}\alpha$, the residue field extensions  $\kappa(\beta)/\kappa(\alpha)$ is separable,
    \end{enumerate} 
    then for general elements $L\in |\mathscr L|$, $f^*L$ is an integral normal $k$-surface.
\end{remark}

\section{Stein factorization and Stein degree}
For a universally closed quasi-separated locally of finite type surjective morphism $f:X\to Y$ between schemes, the Stein factorization of $f$ is the unique factorization, cf. \cite[\href{https://stacks.math.columbia.edu/tag/03GX}{Tag 03GX}]{stacks-project}:
\[\begin{tikzcd}
	X & T \\
	& Y
	\arrow["g", from=1-1, to=1-2]
	\arrow["f"', from=1-1, to=2-2]
	\arrow["h", from=1-2, to=2-2]
\end{tikzcd}\]
Here $T=\Spec_Y(f_*\mathcal O_X)$, $g_*\mathcal O_X=\mathcal O_T$ and $h$ is integral, moreover $g$ is universally closed quasi-separated locally of finite type and surjective. Suppose $f$ is proper and $Y$ is locally Noetherian, e.g. $X$ and $Y$ are varieties over a field $k$, we have $g$ is proper with geometrically connected fibers and $h$ is finite.

\begin{example}
    Let $Y=\mathbb A^1_k:=\Spec(k[t])$ for a field $k$ of characteristic $p>0$, and let $F:Y\to Y$ denote the absolute Frobenius morphism of $Y$. Denote by 
    \[Y\to Y'=\Spec (k[t^p])\simeq \mathbb A^1_k\]
    the geometric Frobenius morphism of $Y$, which is a $k$-variety morphism. Let $X=\mathbb A^1\times \mathbb P^1\to Y$ be the projection to the first factor and consider the composite morphism $f:X\to Y\to Y'$. We see $Y\to Y'$ is the finite part of the Stein factorization of $f$. We see that though $f$ has geometrically irreducible fibers, the finite part of the Stein factorization of $f$ is not trivial.
\end{example}
\subsection{Stein degree}
Suppose $f:X\to Y$ is a universally closed locally of finite type quasi-separated morphism between locally Noetherian schemes and $Y$ is integral with the generic point $\eta$, let $h:T\to Y$ be the Stein factorization of $f$, we define the Stein degree $\sdeg(X/Y)$ of $X\to Y$ to be the $\eta$-dimension of $\Gamma(T_\eta,\mathcal O_{T_\eta})=\Gamma(X_\eta,\mathcal O_{X_\eta})$.
It is evident that $\sdeg(X/Y)\leq \sdeg (X^\nu/Y)$.

From now on we assume $X$ is normal and $f$ is proper. Let $\{X_\alpha\}_{\alpha\in A}$ be the irreducible components of $X$. By definition, we have \[\sdeg(X/Y)=\sum\limits_{\alpha\in A}\sdeg(X_\alpha/Y).\] So we assume $X$ is normal integral for convenience.

\begin{proposition}\label{sdeg_normal}
    Let $f:X\to Y$ be a proper surjective morphism between integral locally Noetherian schemes with $X$ normal, and let $h\colon T\to Y$ be the finite part of the Stein factorization of $f$. Then $T$ is an integral normal scheme, and the Stein degree is $\sdeg(X/Y)=\deg(K(T)/K(Y))=\deg(T/Y)$. Moreover, $K(T)$ is the algebraic closure of $K(Y)$ in $K(X)$.
\end{proposition}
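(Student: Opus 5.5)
Since $T=\Spec_Y(f_*\mathcal O_X)$ is affine over $Y$, the plan is to work on an affine open $\Spec A\subset Y$ with $A$ a Noetherian domain and $K(Y)=\operatorname{Frac}(A)$. Over it $T$ is $\Spec B$ with $B=\Gamma(f^{-1}\Spec A,\mathcal O_X)$. Because $f$ is proper and $Y$ is locally Noetherian, $f_*\mathcal O_X$ is coherent, so $h$ is finite and $B$ is a finite $A$-module.

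\textbf{Integrality.} Since $X$ is integral, $\Gamma(U,\mathcal O_X)$ injects into $K(X)$ for every nonempty open $U$. Hence $B$ is a subring of $K(X)$, in particular a domain. Covering $Y$ by affines and gluing, $T$ is integral with $K(T)=\operatorname{Frac}(B)\subset K(X)$.

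\textbf{Normality.} Take $b\in\operatorname{Frac}(B)$ integral over $B$. Then $b\in K(X)$ is integral over $\mathcal O_{X,x}$ for every $x\in f^{-1}\Spec A$. Since $X$ is normal, $b\in\mathcal O_{X,x}$ for all such $x$, so $b$ is a regular function on $f^{-1}\Spec A$, that is, $b\in B$. This shows $T$ is normal.

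\textbf{Degree.} $h$ is finite and surjective, and $h$ is surjective because $f$ is. Hence $K(T)=B\otimes_A K(Y)$ is a finite field extension of $K(Y)$. By flat base change, $\Gamma(X_\eta,\mathcal O_{X_\eta})=B\otimes_AK(Y)$, since localization is flat and commutes with global sections on the quasi-compact, separated scheme $f^{-1}\Spec A$. Therefore $\sdeg(X/Y)=[K(T):K(Y)]$, which is also the generic rank of the finite morphism $h$, i.e.\ $\deg(T/Y)$.

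\textbf{Algebraic closure.} Write $L$ for the algebraic closure of $K(Y)$ in $K(X)$. Since $K(T)$ is finite over $K(Y)$, we have $K(T)\subset L$. For the converse, take $\alpha\in L$.
\begin{itemize}
\item Clear denominators: there is $0\neq a\in A$ such that $a\alpha$ is integral over $A$.
\item For every $x\in f^{-1}\Spec A$, the element $a\alpha$ is integral over $\mathcal O_{X,x}$, so by normality $a\alpha\in\mathcal O_{X,x}$.
\item Hence $a\alpha\in B$, and $\alpha\in\operatorname{Frac}(B)=K(T)$.
\end{itemize}

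The only slightly delicate step is the identification of $\Gamma(X_\eta,\mathcal O_{X_\eta})$ with the generic fibre of $f_*\mathcal O_X$, which is handled by flat base change. Everything else is direct use of the normality of $X$.
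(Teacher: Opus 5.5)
Your proposal is correct and follows essentially the same route as the paper: the key step, clearing denominators so that $a\alpha$ is integral over $A$ and then using normality of $X$ to conclude $a\alpha\in\Gamma(f^{-1}\Spec A,\mathcal O_X)$, is exactly the paper's argument that $K(T)$ is the algebraic closure of $K(Y)$ in $K(X)$. The differences are only in detail: you prove integrality and normality of $T$ directly where the paper cites Stacks Tag 035L, and you justify $\Gamma(X_\eta,\mathcal O_{X_\eta})=\Gamma(T_\eta,\mathcal O_{T_\eta})$ by flat base change where the paper treats it as part of the definition; also, working with the local rings $\mathcal O_{X,x}$ for $x\in f^{-1}\Spec A$ avoids the paper's loose reduction to the case where $X$ itself is affine.
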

\begin{proof}
    By  \cite[\href{https://stacks.math.columbia.edu/tag/035L}{Tag 035L}]{stacks-project},
    $T$ is an integral normal scheme. By definition, \[\sdeg(X/Y)=\dim_{k(\eta)}\Gamma(T_\eta,\mathcal O_{T_\eta})=\dim_\eta\Gamma(\eta_T,\mathcal O_{\eta_T})=\dim_\eta K(T)=\deg(K(T)/K(Y)).\]
    For $K(T)$, this is a generic problem hence we may assume $X$ and $Y$ are affine schemes and $\mathcal O_T$ is the integral closure of $\mathcal O_Y$ in $\mathcal O_X$. As $\mathcal O_X$ is normal, suppose some element $\alpha\in K(X)$ is algebraic over $K(Y)$, there is some element $\beta\in \mathcal O_Y$ such that $\beta\alpha$ is integral over $\mathcal O_Y$ and hence integral over $\mathcal O_X$, hence $\beta\alpha\in\mathcal O_X$ and by definition $\beta\alpha\in\mathcal O_T$, which implies that $\alpha\in K(Y)\mathcal O_T\subset K(T)$ and hence $K(T)$ is the algebraic closure of $K(Y)$ in $K(X)$.
\end{proof}
\subsection{Separable Stein degree and purely inseparable Stein degree}
By \Cref{sdeg_normal}, the Stein degree between integral locally Noetherian schemes  $X/Y$ with $X$ normal is equal to the Stein degree of the function field extension $\Spec(K(X))/\Spec(K(Y))$. So we only need to describe Stein degree for essentially of finite type field extensions. Let $K(Z)/K(Y)$ be the finite part of the Stein factorization, where $Z\to Y$ is the finite part of the Stein factorization of $X\to Y$. Let $K(Y)\to K(Y)^a$ be an algebraic closure of $K(Y)$ and denote by $k$ the separable closure of $K(Y)$ in $K(X)$, which is a subfield of $K(Z)$ by \Cref{sdeg_normal}. Then we have:
\[\begin{aligned}
    \sdeg(X/Y)&=\deg(K(Z)/K(Y))=\deg(K(Z)\otimes_{K(Y)} K(Y)^a/K(Y)^a)\\&=\deg(K(Z)\otimes_k(k\otimes _{K(Y)}K(Y)^a)/K(Y)^a)
\end{aligned}\]
We define the separable Stein degree of $X/Y$ to be 
\[d_s:=\deg(k/K(Y))=\deg(k\otimes_{K(Y)}K(Y)^a/K(Y)^a).\] Since $k$ is separable over $K(Y)$, we have \[k\otimes_{K(Y)}K(Y)^a\simeq (K(Y)^a)^{\oplus d_s}\] as $K(Y)^a$-algebras.
Hence $d_s=\mathfrak n(\Spec (k\otimes_{K(Y)}K(Y)^a))$, where $\mathfrak n(X)$ denotes the number of irreducible components of $X$ for a scheme $X$. We also denote $\mathfrak n(A):=\mathfrak n(\Spec(A))$ for a ring $A$. More generally, for a morphism $f:X\to Y$ of schemes, we define $\mathfrak n(X/Y)$ to be the number of geometric irreducible components of a general fiber of $X/Y$, which is well-defined by \cite[\href{https://stacks.math.columbia.edu/tag/0BUI}{Tag 0BUI}]{stacks-project}.
Moreover, in our case that $X$ and $Y$ are integral normal schemes with Stein factorization $X\to Z\to Y$, we have 
\[\begin{aligned}
    \mathfrak n(X/Y)&=\mathfrak n(X_{\bar \eta})=\mathfrak n(K(X)\otimes_{K(Y)}K(Y)^a)\\
    &=\mathfrak n(K(X)\otimes_k(K(Y)^a)^{\oplus d_s})=d_s\mathfrak n(K(X)\otimes_{K(Z)}(K(Z)\otimes_kk^a))\label{2.3.1}
\end{aligned}\]
Here $k^a=K(Y)^a$ is an algebraic closure of $k$. We see that $K(Z)/k$ is purely inseparable hence radicial, that is $K(Z)/k$ is a universal homeomorphism and hence \[\mathfrak n((K(Z)\otimes_kk^a))=\mathfrak n(k^a)=1.\] 

Now we are going to show that the base change along $K(Z)\to K(X)$ preserve the number of irreducible components, or equivalently $\mathfrak n(K(X)\otimes_{K(Z)}(K(Z)\otimes_kk^a))=1$. Denote 
\[K(Z)^\text{perf}:=\lim\limits_{e\to\infty} F^e_* K(Z)\]
to be the colimit perfection of $K(Z)$, which is a perfect field purely inseparable over $K(Z)$. Since $K(Z)$ is algebraically closed in $K(X)$, $K(Z)^\text{perf}\otimes_{K(Z)} K(X)$ is a purely inseparable extension of $K(X)$, which is radicial. We claim that $K(Z)^\text{perf}$ is algebraically closed in $K(Z)^\text{perf}\otimes_{K(Z)} K(X)$. In fact, suppose 
\[\alpha\in K(Z)^\text{perf}\otimes_{K(Z)} K(X)-K(Z)^\text{perf}\]
is an element algebraic over $K(Z)^\text{perf}$, denote \[f(x)=\sum\limits_{i=0}^n a_ix^i\] to be the monic minimal polynomial of $\alpha$ in $K(Z)^\text{perf}$, then there is an $e\in\mathbb N$ such that $a_i^{p^e}\in K(Z)$ for all $i=0,1,\cdots, n$ and $\alpha^{p^e}\in K(X)$. We have that \[f^e(x):=\sum\limits_{i=0}^n a_i^{p^e}x^i\in K(Z)[x]\] is a zero polynomial of $\alpha^{p^e}\in K(X)$, which implies that $\alpha^{p^e}\in K(Z)$ as $K(Z)$ is algebraically closed in $K(X)$, and hence $\alpha\in K(Z)^\text{perf}$. 

Hence after base changing to $K(Z)^\text{perf}$ and replacing $K(X)/K(Z)$ with the field extension $K(Z)^\text{perf}\otimes_{K(Z)} K(X)/K(Z)^\text{perf}$, we may assume $K(Z)$ is a perfect field. Let $K'$ be the colimit perfection of $K(X)$, $K(Z)$ is algebraically closed in $K'$ since $K(Z)$ is perfect and $K'/K(X)$ is purely inseparable. Rename $K(Z)$ with $k'$ and replace $K(X)$ with $K'$, we end with the field extensions $K'/k'/k$, where $k'/k$ is purely inseparable, $K'$ and $k'$ are perfect field and $k'$ is algebraically closed in $K'$. It suffices to prove that \[\mathfrak n(K'\otimes_{k'}(k'\otimes_kk^a))=1.\]

$R:=k'\otimes_k k^a$ is a ring with a unique prime ideal $\mathfrak p$ such that $R/\mathfrak p\simeq k^a$.  Since $K'$ is $k'$-flat, $\mathfrak p\otimes _{k'}K'\to R\otimes _{k'}K'$ is an injection and $\mathfrak p\otimes _{k'}K'$ is an ideal of $R\otimes _{k'}K'$. Now what we are going to prove is that $\mathfrak n(R\otimes_{k'}K')=1$. We see that $\mathfrak p$ is the nilpotent radical of $R$, hence every element in $\mathfrak p\otimes_{k'}K'$ is nilpotent. We have \[\mathfrak n(R\otimes_{k'}K')=\mathfrak n(R\otimes_{k'}K'/\mathfrak p\otimes_{k'}K')=\mathfrak n(k^a\otimes_{k'}K'),\] thus it suffices to prove that $k^a\otimes_{k'}K'$ is irreducible. We only need to check this for finite extensions of $k'$ instead of base changing to $k^a$ by passing to colimits. Let $l/k'$ be a finite extension, which is finite separable, hence a single extension, say $l=k'[\theta]$ with $f\in k'[t]$ the monic minimal polynomial of $\theta$. We claim that $f$ is also the minimal polynomial of $\theta'$ in $K'$. In fact factorizing $f$ in $K'^a$ and $k'^a$ will give the same unique factorization, hence if the minimal polynomial $h$ of $\theta$ in $K'$ has degree not equal to $f$, then $h|f$ and all coefficients are algebraic over $k'$, hence in $k'$ and $h$ will serve as a zero-equation of $\theta$, which contradicts the minimality of $f$. This implies that \[l\otimes_{k'} K'=K'(\theta)\] is also a field separable over $K'$ as desired. Hence $\mathfrak n(K(X)\otimes_{K(Z)}(K(Z)\otimes_kk^a))=1$ and in particular we have \[\mathfrak n(X/Y)=d_s\mathfrak n(K(X)\otimes_{K(Z)}(K(Z)\otimes_kk^a))=d_s.\] 

So the Stein degree of $X/Y$ is equal to $\mathfrak n(X/Y)d_i$, where $d_i$ is the purely inseparable degree of $K(Z)/K(Y)$. We denote $\mathfrak i(X/Y)$ to be $d_i$ and we have $\sdeg(X/Y)=\mathfrak n(X/Y)\mathfrak i(X/Y)$ and we call $\mathfrak n(X/Y)$ and $\mathfrak i(X/Y)$ the separable Stein degree and the purely inseparable Stein degree of the morphism $f:X\to Y$ between normal integral schemes instead of $d_s$ and $d_i$ respectively. Suppose $X$ is not normal generally, we have 
\[\sdeg(X/Y)\leq\sdeg(X^\nu/Y)=\mathfrak n(X/Y)\mathfrak i(X/Y).\]
More generally, for a dominant map $f:X\to Y$ with $X$ proper over $\text{Im}(f)$, where $\text{Im}(f)$ is an open subscheme of $Y$ whose base space is the image of $f$, we define the Stein degrees $\sdeg(X/Y),\sdeg(X^\nu/Y),\mathfrak n(X/Y),\mathfrak i(X/Y)$ to be the Stein degrees of $X/\text{Im}(f)$. Moreover, suppose $X,Y$ are $K$-schemes with $K$ separably closed, then $\mathfrak n(X/Y)$ equals to the number of irreducible components of a general closed fiber by definition. In particular, Stein degrees between schemes defined over an algebraically closed field of characteristic $0$ is much simpler than that in general case.
\begin{corollary}
    Suppose $X,Y$ are normal integral schemes with characteristic $0$ function fields and $f:X\to Y$ is a dominant map proper over its image, then $\sdeg(X/Y)=\mathfrak n(X/Y)$.
\end{corollary}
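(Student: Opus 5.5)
The plan is to reduce to the decomposition $\sdeg(X/Y)=\mathfrak n(X/Y)\,\mathfrak i(X/Y)$ established just above, and to show that in characteristic $0$ the purely inseparable factor is trivial. Since $f$ is dominant and proper over its image, the Stein degree is computed over $\text{Im}(f)$. By \Cref{sdeg_normal} it therefore depends only on the function field extension $K(X)/K(Y)$. Concretely, it equals $\deg(K(T)/K(Y))$, where $K(T)$ is the algebraic closure of $K(Y)$ in $K(X)$.

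The key steps, in order, are as follows. First, note that $K(Y)$ has characteristic $0$, so it is perfect, and every algebraic extension of it is separable. Second, in the notation of the preceding discussion, the separable closure of $K(Y)$ in $K(X)$ therefore coincides with the algebraic closure $K(T)$. Hence $K(T)/k$ is trivial, and $\mathfrak i(X/Y)=d_i=1$. Third, the argument preceding the corollary gives $\mathfrak n(X/Y)=d_s$. That argument never used positive characteristic: in characteristic $0$ the perfection step is vacuous, because $K(T)$ is already perfect. Combining these gives $\sdeg(X/Y)=d_s\cdot d_i=d_s=\mathfrak n(X/Y)$.

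Alternatively, one can bypass the $d_i$ bookkeeping. Since $K(T)/K(Y)$ is finite separable of degree $d$, we have $K(T)\otimes_{K(Y)}K(Y)^a\cong (K(Y)^a)^{\oplus d}$. Moreover, $K(X)\otimes_{K(T)}K(T)^a$ is irreducible, because $K(T)$ is algebraically closed in $K(X)$ and the extension is separable. Counting irreducible components of $K(X)\otimes_{K(Y)}K(Y)^a$ then gives exactly $d$.

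I expect no real obstacle here. The only point requiring care is to check that the earlier component-counting argument applies verbatim when the characteristic exponent is $1$. In that case $K(T)=k$, the perfection is the identity, and one lands directly in the final step, where $l\otimes_{k'}K'$ is a field for every finite separable extension $l/k'$.
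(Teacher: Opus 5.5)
Your proposal is correct and follows the paper's route. The paper states the corollary without separate proof, as an immediate consequence of the factorization $\sdeg(X/Y)=\mathfrak n(X/Y)\,\mathfrak i(X/Y)$ derived just before it, and you likewise get $\mathfrak i(X/Y)=1$ because algebraic extensions in characteristic $0$ are separable, after checking that the count $\mathfrak n(X/Y)=d_s$ goes through with the perfection steps vacuous.
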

The following proposition shows that the Stein degrees are well controlled under base changes and compositions.
\begin{proposition}\label{composition}
    Let $X\xrightarrow{f} Y\xrightarrow{g} Z$ be projective surjective morphisms between integral schemes with $[K(Z):K(Z)^p]=p^d$ for some $d\in \mathbb N$,  we have:
    \begin{enumerate}
        \item $\mathfrak n(X/Y)=\mathfrak n(X\times_Y Y'/Y')$ for any dominant morphism $Y'\to Y$, 
        \item $\mathfrak i(X/Y)=\mathfrak i(X\times_Y Y'/Y')$  for any generically separable dominant morphism $Y'\to Y$,
        \item $\mathfrak n(X/Z)\leq \mathfrak n(X/Y)\mathfrak n(Y/Z)$,
        \item $\mathfrak i(X/Z)\leq \mathfrak i(X/Y)^d\mathfrak i(Y/Z)$,
        \item $\sdeg(X^\nu/Z)\leq \sdeg(X^\nu/Y)^{\max\{d,1\}}\sdeg(Y^\nu/Z)$.
    \end{enumerate}
\end{proposition}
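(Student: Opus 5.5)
Throughout, write $K_X,K_Y,K_Z$ for the function fields, and reduce everything to field theory via \Cref{sdeg_normal}. For a normal integral $X$, the algebraic closure $L_{X/Y}$ of $K_Y$ in $K_X$ has degree $\sdeg(X/Y)$. Its separable part $k_{X/Y}$ has degree $\mathfrak n(X/Y)$, and $L_{X/Y}/k_{X/Y}$ is purely inseparable of degree $\mathfrak i(X/Y)$. For non-normal $X$ we pass to $X^\nu$, which does not change function fields. So the invariants are
\[\mathfrak n=[k_{X/Y}:K_Y]_s,\qquad \mathfrak i=[L_{X/Y}:k_{X/Y}].\]

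For (1), the computation preceding the statement identifies $\mathfrak n(X/Y)$ with the number of irreducible components of $K_X\otimes_{K_Y}K_Y^a$, i.e. of the geometric generic fibre. For a dominant $Y'\to Y$, choose an algebraic closure of $K_{Y'}$ containing $K_Y^a$. The generic fibre of $X\times_YY'\to Y'$ is the base change of $X_\eta$ along $K_Y\to K_{Y'}$. The number of irreducible components of a geometric generic fibre does not change under extension of algebraically closed fields, since irreducible components of varieties over an algebraically closed field are geometrically irreducible. Hence (1) holds. Here $\mathfrak n(X\times_YY'/Y')$ should be read on the (reduced, normalized) dominant part.

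For (2), base change along a separable $K_{Y'}/K_Y$. The key claim is that $L\otimes_{K_Y}K_{Y'}$ is reduced when $L$ is algebraic over $K_Y$, and that the purely inseparable part is preserved: $L/k$ is purely inseparable of degree $\mathfrak i$ and $k\otimes K_{Y'}$ is a product of fields $k_j$, so $L\otimes_k k_j$ is a local field extension of $k_j$ of degree $\mathfrak i$. One then checks that the algebraic closure of $K_{Y'}$ inside each component of $K_X\otimes K_{Y'}$ is exactly such a factor. This uses that $K_X/L$ is primary and separability-preserving after the $K_Y$-separable base change, which I would derive from the perfection argument in the text. Thus the inseparable degree is unchanged.

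For (3)–(5), set $L_1=L_{Y/Z}\subset K_Y$ and $L_2=L_{X/Y}\subset K_X$. Any element of $K_X$ algebraic over $K_Z$ is algebraic over $K_Y$, hence lies in $L_2$. So $L_{X/Z}$ is the algebraic closure of $K_Z$ in $L_2$, which is a finite extension of $K_Y$ of degree $\sdeg(X/Y)$. Separable parts give (3): the separable closure of $K_Z$ in $L_2$ is separable algebraic over $K_Z$. Its compositum with $K_Y$ lies in $k_{X/Y}$, which has degree $\mathfrak n(X/Y)$ over $K_Y$. Intersecting with $K_Y$ lands in $k_{Y/Z}$. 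A linear-disjointness count (separable extension of $K_Z$, base changed to $K_Y$) then bounds its degree by $\mathfrak n(X/Y)\mathfrak n(Y/Z)$.

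For (4), let $F=L_{X/Z}$ and $F_0=F\cap K_Y=L_1$. We need $[F:F_0]_i\le\mathfrak i(X/Y)^d$. The main obstacle is here, and it is exactly where the hypothesis $[K_Z:K_Z^p]=p^d$ enters. The plan is to use $p$-bases. Purely inseparable elements of $F$ over $F_0K_Y$-type subfields of exponent $e$ lie in $K_Z^{p^{-e}}F_0$-type extensions. Such an extension generated over a field of $p$-degree $d$ has degree at most $p^{ed}$. Meanwhile $p^e\le\mathfrak i(X/Y)$, because the exponent of $F/F_0$ is controlled by the exponent of $L_2/k_{X/Y}$. Combining gives degree $\le (p^e)^d\le\mathfrak i(X/Y)^d$. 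I expect the delicate point to be showing that the exponent bound transfers from $L_2/K_Y$ to $F/F_0$ without losing the separable part; I would handle it by first base changing by the separable closure, using (1) and (2).

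Finally, (5) follows by multiplying (3) and (4), using $\sdeg=\mathfrak n\,\mathfrak i$. This gives
\[\sdeg(X^\nu/Z)\le \mathfrak n(X/Y)\,\mathfrak i(X/Y)^{d}\,\sdeg(Y^\nu/Z)\le\sdeg(X^\nu/Y)^{\max\{d,1\}}\sdeg(Y^\nu/Z),\]
since $\mathfrak n,\mathfrak i\ge1$.
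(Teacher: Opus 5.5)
Your proposal is correct. Parts (1), (3) and (5) coincide with the paper's proof: your ``linear-disjointness count'' in (3) is exactly the paper's observation that the minimal polynomial over $k_{Y/Z}$ of an element separable over $K_Z$ stays irreducible over $K_Y$. Part (2) follows the same reduction as the paper, and in both versions the key closedness claim is only sketched.

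The genuinely different part is (4).

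\emph{The paper's route.} It replaces $K_Y$ by its separable closure in $K_X$. It then climbs a chain $K_Y=L_0\subset\cdots\subset L_n$ of degree-$p$ purely inseparable steps, with $p^n=\mathfrak i(X/Y)$. By induction, each step enlarges the algebraic closure of $K_Z$ by a factor of at most $[K^a:(K^a)^p]=p^d$.

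\emph{Your route.} You bound everything at once using the exponent $e$ of $L_{X/Y}/k_{X/Y}$. This works, and the point you flag as delicate needs no base change. For $\gamma\in F$, the element $\gamma^{p^e}$ lies in $k_{X/Y}$, so it is separable over $K_Y$. The coefficients of its minimal polynomial over $K_Y$ are algebraic over $K_Z$, so they lie in $F_0=L_1$. Hence $\gamma^{p^e}$ is separable over $F_0$. This is the same minimal-polynomial lemma the paper uses in its reduction step.

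\emph{One correction.} The container is not $K_Z^{p^{-e}}F_0$, which can be too small. For example, take $F_0=K_Z(a^{1/p})$ with $a\notin K_Z^p$, and $K_X=K_Y(a^{1/p^2})$. Then $e=1$, but $a^{1/p^2}\notin K_Z^{1/p}F_0=K_Z^{1/p}$. The correct container is $(F_0')^{p^{-e}}$, where $F_0'$ is the separable closure of $F_0$ in $F$. Since $F_0'$ is finite over $K_Z$, it has $p$-degree $d$, so $[(F_0')^{p^{-e}}:F_0']=p^{ed}$. This gives $[F:F_0]_i=[F:F_0']\le p^{ed}\le\mathfrak i(X/Y)^d$.

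\emph{Comparison.} Your route avoids the induction and yields the slightly sharper bound $p^{de}$. The paper's degree-by-degree climb never needs the exponent, but it requires the preliminary reduction plus an induction.
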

\begin{proof}
    For $(1)$, suppose $Y'\to Y$ is a dominant morphism, then any general geometric point $y\to Y'$ is also a general geometric point of $Y$ and the assertion  follows.

    For $(2)$, this is a purely field-theoretical problem hence we may assume $X$ is integral. Denote $K(Y)$ to be $k$, $K(X)$ to be $K$, the separable closure and the algebraic closure of $k$ in $K$ to be $k^s$ and $k^a$ respectively. Suppose $k\to l$ is a separably generated field extension, then $l$ is a separable algebraic extension of $k(\{t_i\}_{i\in I})$ for some index set $I$. It's clear that purely transcendental extensions preserve the inseparable degree. Hence it suffices to show the argument for separable algebraic extensions $k\to l$. By passing to limit, we may assume $k\to l$ is finite, hence a single extension, say $l=k(\alpha)$. Hence $k^s\otimes_k l$ is the finite direct sum of several $k^s(\alpha)$'s. Replacing $k\to l$ with $k^s\to k^s(\alpha)$ we only need to treat with $k$ separably closed in $K$. We see that suppose $k$ is separably closed in $K$ and $l$ is a separable extension of $k$, then $L:=l\otimes_k K$ is a field and $l$ is separably closed in $L$. Moreover, as $k^a$ is purely inseparable over $k$, we see \[k^a\otimes_k l=lk^a=:f\] is a field finite separable over $k^a$ and purely inseparable over $l$ by considering the elements in the fields. Hence \[L=K\otimes_k l=K\otimes_{k^a}f.\] By considering the elements in the fields, again we have $f$ is algebraically closed in $L$. Hence $\mathfrak i(K\otimes_k l/l)=[f:l]=[k^a:k]=\mathfrak i(X/Y)$.

    For $(3)$, this is the same as in \cite[Lemma 2.9]{sdeglogcalabiyau}. Here we give a field theoretical proof. Let $K\to L\to M$ be the function field extensions. If one of such $\mathfrak n(L/K)$ or $\mathfrak n(M/L)$ is infinity then we are done, so we may assume $\mathfrak n(L/K),\mathfrak n(M/L)<\infty$. Replace $M$ with the separable closure of $L$ in $M$ we may assume $M/L$ is a finite separable extension. Let $K^s$ denote the separable closure of $K$ in $L$ and $K'$ denote the separable closure of $K$ in $M$. It suffices to show $[K':K^s]\leq [M:L]$. In fact suppose $\beta\in K'$ is separable over $K^s$, then consider the minimal polynomial $f$ of $\beta$ over $K^s$, since $K^s(\beta)$ is separable over $K^s$, we see that $f$ is also the minimal polynomial of $\beta$ in $K^a$ as we have \[[K^a(\beta):K^a]=[K^s(\beta):K^a\cap K^s(\beta)]=[K^s(\beta):K^s]=\deg(f).\]  Hence suppose $f_1$ is the monic minimal polynomial of $\beta$ in $L$ which divides $f$, then the coefficients of $f_1$ are algebraic over $K$ and hence $f_1$ is the minimal polynomial of $\beta$ in $K^a$, which is just $f$. So we have \[[M:L]\geq [L(\beta):L]= [K^s(\beta):K^s].\] Passing to the colimit of all finite sub-extensions of $K'/K^s$, which are  such single extensions, we have $[M:L]\geq [K':K^s]$ as desired.

    For $(4)$, we may assume $X$ is integral. Therefore we only need to consider the function field extensions $K\to L\to M$ and prove that \[\mathfrak i(M/K)\leq \mathfrak i(L/K)^d\cdot\mathfrak i(M/L).\] If one of such $\mathfrak i(L/K)$ or $\mathfrak i(M/L)$ is infinity then we are done, so we may assume $\mathfrak i(L/K),\mathfrak i(M/L)<\infty$. Replace $L$ with its separable closure $L'$ in $M$, let $K'$ to be the algebraic closure of $K$ in $L'$, we show that $K'/K^a$ is separable. Otherwise there is an inseparable element $x\in K'\subset L'$ over $K^a$, consider its minimal polynomial $f$ over $K^a$ and its minimal polynomial $g$ over $L$, then $g$ is separable and $f$ is not separable. However $g$ divides $f$ and every coefficient of $g$ is algebraic over $K^a$ and falls in $L$, hence in $K^a$, which is a contradiction as $g\neq f$. Hence we have $\mathfrak i(L/K)=\mathfrak i(L'/K)$. Thus we may assume $L$ is separably closed in $M$ and $K$ is separably closed in $L$. Let \[L=L_0\subset L_1\subset L_2\subset\cdots\subset L_n=L^a\subset M\] be a chain of purely inseparable extensions of degree $p$ with $L^a$ algebracally closed in $M$. Moreover, we have $\mathfrak i(M/L)=p^n$. We prove the argument by induction on $n$. If $n=0$, then the assertion follows as the algebraic closure of $K$ in $L$ is the same as that in $M$. Suppose the $(n-1)$-case is proved, consider the $n$-case. Now twe are going to prove that $\mathfrak i(L_n/K)\leq p^d\mathfrak i(L_{n-1}/K)$. Replace $L$ with $L_{n-1}$, we only need to show the statement for $n=1$. In fact, let $K^b$ denote the algebraic closure of $K$ in $L_1$, we see $K^b/K^a$ is purely inseparable. Since $L_1/L$ is purely inseparable of degree $p$, we have $L_1=L(\alpha)$ for some $\alpha^p=a\in L$. Then suppose $\beta\in L'$ is algebraic over $K$, then we see $\beta^p\in L$ is algebraic over $K$, hence $\beta^p\in M$. Since $K^a$ is algebraic over $K$, we see \[[K^b:K^a]\leq [(K^a)^\frac{1}{p}:K^a]\leq [K:K^p]=p^d.\] Hence we have\[\mathfrak i(L_n/K)\leq p^d\mathfrak i(L_{n-1}/K)\leq\cdots\leq p^{dn}\mathfrak i(L/K)=\mathfrak i(M/L)^d\mathfrak i(L/K)\] by induction hypothesis and the assertion follows.

    For $(5)$, this directly follows by $(3)$ and $(4)$.
\end{proof}

\section{Boundedness of S-degree of divisors on log Calabi-Yau fibrations}

Before going to the proof of the boundedness of the Stein degrees, we first state a characteristic-free version of the Shokurov's Conjecture in relative dimension $1$-case. We work over an algebraically closed field $k$ with characteristic $p$.
\begin{theorem}\label{mult1}
    Let $\epsilon>0$ be a real number and $R\subset [0,1]$ be a finite set of rational numbers, then there exists a prime number $p_0$ and a real number $\delta$ depending only on $\epsilon$ and $R$ such that suppose:
    \begin{enumerate}
        \item $(X,B)\to Z$ is an $\epsilon$-lc Fano-type fibration,
        \item $\dim X-\dim Z=1$,
        \item $B\in R$,
        \item $K_X+B\sim_\mathbb Q 0/Z$
        \item $\dim X\leq 3$ and
        \item $\chara k=p>p_0$,
    \end{enumerate} then the general fibers of $X/Z$ are $\mathbb P^1$ and there is a canonical bundle formula 
    \[K_X+B\sim_\mathbb Q f^*(K_Z+B_Z+M_Z)\]
    with the generalized pair $(Z,B_Z+M_Z)$ generalized $\delta$-lc. In particular, for any prime divisor $D$ on $Z$, the coefficients of $f^*D$ is bounded from above depending only on $\epsilon$ and $R$ after properly shrinking near the generic point of $D$.
\end{theorem}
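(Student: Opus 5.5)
We would first dispose of the generic fibre. Let $F$ be a general fibre of $f$ and $\eta$ the generic point of $Z$. Since $X$ is of Fano type over $Z$ and $K_X+B\sim_\mathbb Q0/Z$, the generic fibre $X_\eta$ is a regular curve over $K(Z)$ with $-K_{X_\eta}=B_\eta$. Moreover $B_\eta\neq0$: otherwise $K_X\equiv 0/Z$, contradicting bigness of $-K_X$ over $Z$ coming from the Fano type hypothesis. Hence $\deg K_{X_\eta}<0$. In characteristic $p>3$ the genus change of a regular curve with $\deg K<0$ forces $X_\eta$ to be smooth of arithmetic genus $0$ (Tate's genus change: a nonsmooth regular genus-zero curve only occurs for $p\le 3$). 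Taking $p_0\ge 5$, general fibres are therefore smooth conics, i.e.\ $\mathbb P^1$ over $k$.

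Since $\dim X\le3$, $\dim Z\le 2$. We would define the canonical bundle formula in the usual way. For each prime divisor $D$ on $Z$ (after any birational model of $Z$, to obtain the b-divisor), set $b_D:=1-\mathrm{lct}_{\eta_D}(X,B;f^*D)$ and $B_Z=\sum b_DD$. Then $M_Z$ is defined by $K_X+B\sim_\mathbb Q f^*(K_Z+B_Z+M_Z)$. Because the fibres are $\mathbb P^1$ and $p$ is large, the moduli part should be b-nef (and even b-semiample): it is pulled back from the moduli of the points $B|_F$ on $\mathbb P^1$, with coefficients in $R$, i.e.\ from a morphism to $\overline M_{0,n}$-type spaces in the sense of Kawamata/Prokhorov–Shokurov. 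We expect this to be provable in char $p>p_0$ because the fibration is tame once $p$ exceeds the denominators of $R$ and $2$. We would cite or reprove the characteristic-free relative-dimension-one canonical bundle formula for this step.

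The heart of the proof is the bound $b_D\le 1-\delta$ on every birational model. Localize at the generic point of $D$ (a codimension-one point of a model of $Z$). Base changing to a DVR (or, when $\dim Z=2$, cutting by a topological hyperplane section via \Cref{topologicalbertini}(3)), this reduces to a surface $\epsilon'$-lc pair $(S,B_S)$ fibred over a curve germ with $\mathbb P^1$ generic fibre. Here $\epsilon'$ depends only on $\epsilon$, and the adjunction to $f^*L^\tp$ keeps the lc/klt/$\epsilon$-lc properties because the section is a Cartier divisor in general position. We then run a $K_S$-MMP over the germ to reach a relatively minimal ruled surface or a conic-bundle germ, preserving $\epsilon$-lc since the map is crepant for $K+B$ (\Cref{cycrep}). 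On such a germ the surface $\epsilon$-lc singularities over $\chara>5$ are tame quotient-type: they are toric/cyclic for large $p$ since $\epsilon$-lc surface singularities in large characteristic are the same as over $\mathbb C$. The classification of fibres then gives a bound on the fibre multiplicities $m$ of $f^*D$ in terms of $\epsilon$. Consequently $\mathrm{lct}(X,B;f^*D)\ge \delta$, via the standard computation that for an $\epsilon$-lc germ over a DVR whose fibre components have multiplicity $\le m$, the lct is at least $\epsilon/m$. The in-particular clause follows, since the multiplicities of $f^*D$ near the generic point of $D$ are at most $\epsilon^{-1}$-controlled. Finally, generalized $\delta$-lc for $(Z,B_Z+M_Z)$ follows because every divisor over $Z$ appears as $D$ on some model and the b-divisor computation is uniform.

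We expect the main obstacle to be the reduction of the codimension-one analysis in the case $\dim Z=2$ to a surface statement whose constants are independent of $p$. There, wild phenomena (inseparable residue field extensions along fibres over $D$, as in the Remark after \Cref{topologicalbertini}) could make $f^*L$ non-normal. We would handle this by using the topological hyperplane section over $k^\tp$, normality from \Cref{topologicalbertini}(3), and bounding $p_0$ by the boundedness over $\Spec\mathbb Z$ of the finitely many surface germ types allowed by $\epsilon$ and $R$ (\Cref{genorm}, \Cref{genres1}).
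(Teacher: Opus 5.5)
Your treatment of the general fibre is a valid alternative to the paper's citation of \cite[Theorem 3.4]{complement1}: a regular genus-zero curve over $K(Z)$ with $H^0=K(Z)$ is a smooth conic once $p>2$. For the existence of the canonical bundle formula the paper also just cites that result.

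The genuine gap is the step you call the heart of the proof. The uniform bound on $\mu_D B_Z$ is asserted, not proved. You hand the multiplicity bound to a ``classification of fibres'' of $\epsilon$-lc surface Mori fibre germs, but you never say which feature of that classification bounds $m$ in terms of $\epsilon$; that bound is the whole content of the theorem. There are three specific problems.
\begin{enumerate}
\item For $\dim Z=2$ the cut surface $f^*L^\tp$ lives over the separably closed but imperfect field $k^\tp$. By \Cref{topologicalbertini} it is only known to be normal, not geometrically normal. So no classification of singularities ``as over $\mathbb C$'' is available there.
\item Your fallback, boundedness over $\Spec\mathbb Z$ of ``the finitely many surface germ types allowed by $\epsilon$ and $R$'', does not work. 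The $\epsilon$-lc surface singularities are not finitely many (every $A_n$ is canonical). Moreover, boundedness of the fibre germs that can occur already implies the multiplicity bound you want, so invoking it is circular.
\item Even granting a bound $m$ on the multiplicities of $f^*D$, the inequality $\mathrm{lct}(X,B;f^*D)\ge\epsilon/m$ is not a standard computation. The threshold depends on how fibre components pass through singular points and meet $\Supp B$, not just on multiplicities. To turn a multiplicity bound into an lc statement you need a model on which the \emph{reduced} fibre can be added to a boundary dominating $B$ with the result still lc. On your Mori fibre germ there is no reason for $(S',B'+C)$ to be lc.
\end{enumerate}

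The missing idea is the argument the paper adapts from \cite{SingbaseFanofib}.
\begin{enumerate}
\item On a log resolution $W$, put coefficient $1$ on every component of $\phi^*f^*D$ and $b=1-\epsilon/2$ on the remaining exceptional divisors and strict transforms of $\Supp B$. This gives $\Gamma_W\ge\Delta_W$.
\item Run a $(K_W+\Gamma_W)$-MMP over $Z$ to a good minimal model $Y\to Y'$.
\item On a general fibre, $\deg(K_F+\Gamma_F)\le v$ because $\Gamma_F\le\Supp B_F$.
\item Each component $T_i$ of $h^*D$ not contracted over $Y'$ has $K_{T_i^\nu}+\Gamma_{T_i^\nu}$ ample with coefficients in a DCC set, hence of degree at least $\theta(\epsilon)$.
\item Intersecting $K_Y+\Gamma_Y$ with $h^*D$ and with a general fibre gives $\sum_i m_i\deg(K_{T_i^\nu}+\Gamma_{T_i^\nu})\le v$, so $m_i\le v/\theta$.
\item A $(K_Y+\Delta_Y)$-MMP over $Y'$ and a contraction to $Y''$ remove the other fibre components. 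On $Y''$ one has $B_{Y''}+\epsilon\lfloor\Delta_{Y''}\rfloor\le\Delta_{Y''}$ with $\lfloor\Delta_{Y''}\rfloor=\Supp h''^*D$.
\item By \Cref{cycrep}, $(X,B+\frac{\epsilon\theta}{v}f^*D)$ is lc, so $\mu_DB_Z\le 1-\frac{\epsilon\theta}{v}$.
\end{enumerate}
This argument uses only intersection degrees and adjunction to curves. That is exactly why it survives over $k^\tp$ without any classification or boundedness of germs.
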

\begin{proof}
    The following proof is similar to that in \cite{SingbaseFanofib}. By \cite[Theorem 3.4]{complement1}, we see the general fibers are $\mathbb P^1$ and the canonical bundle formula holds for $p>\frac{2}{\min(R)}$, hence it suffices to show that $(Z,B_Z+M_Z)$ is generalized $\delta$-lc for some $\delta$. Replace $X$ and $Z$ with their $\mathbb Q$-factorializations we may assume $X$ and $Z$ are $\mathbb Q$-factorial. We are going to bound the coefficients of $B_Z$. The boundedness of coefficients of the b-divisor $\mathcal B_Z$ can be obtained by taking higher models of $X/Z$.
    
    Let $D\subset Z$ be a prime divisor contained in $\Supp(B_Z)$, shrink $Z$ near the generic point of $D$ we may assume $D$ is a Cartier divisor and each component of $B$ is either horizontal over $Z$ or mapped onto $D$. Pick a topological hyperplane section $L^\tp$ such that $(X^\tp, B^\tp+(f^\tp)^*L^\tp)$ is plt with second minimal log discrepancy $\geq \epsilon$, denote $X_L^\tp:=(f^\tp)^*L^\tp$ and we have adjunction formula 
    \[K_{X_L^\tp}+B_L^\tp:=(K_{X^\tp}+B^\tp+X_L^\tp)|_{X_L^\tp}.\]
    Moreover, we have $(X_L^\tp,B_L^\tp)$ is $\epsilon$-lc, and $D_L^\tp$ is sum of $k^\tp$-points, we only need to consider one of them. In the following paragraphs, denote $(X,B)\to Z$ to be $(X_L^\tp,B_L^\tp)\to L^\tp$, which is a lc surface pair projective over a smooth curve over $k^\tp$ with general fibers geometrically integral and geometrically normal.
    
    We put $b=1-\frac{\epsilon}{2}$ and let $\phi:W\to X$ be a log resolution of $(X, B+f^*D)$. Let $\{M_i\}$ be the set of components of $\phi^*f^*D$, let $\{M_j'\}$ be the set of prime exceptional divisors of $\phi$ which do not belong to $\{M_i\}$, and let $\{M_k''\}$ be the set of components of the strict transform of $B$ which do not belong to $\{M_i\}$. We have 
    \[\phi^*B=\sum\limits_i a_i M_i+\sum\limits_j b_j M''_j+\sum\limits b_k M'_k,\]
    where $b_j\leq 1-\epsilon$ by definition. Define
    \[\begin{aligned}
        \Delta_W&:=\sum\limits_i M_i+\sum\limits_jbM'_j+\sum\limits_k b_kM''_k,\\
        \Gamma_W&:=\sum\limits_i M_i+\sum\limits_jbM'_j+\sum\limits_k bM''_k,\\
        B_W&:=\phi^*(K_X+B)-K_W.
    \end{aligned}\]
    We have $(W,\Gamma_W)$ and $(W,\Delta_W)$ are dlt. Any component of $B_W$ with positive coefficient is either exceptional over $X$ or a component of the strict transform of $B$, which is a component of $\Gamma_W$ automatically. So we have $\Supp(\phi^*B)\subset\Supp(\Gamma_W-B_W)$ and $K_W+\Gamma_W$ is big over $Z$ as $X$ is of Fano type over $Z$. Let \[\Gamma:=\phi_*\Gamma_W,\quad-E_W:=-(K_W+\Gamma_W)+\phi^*(K_X+\Gamma),\] we have $E_W$ is exceptional over $X$. Let $g:W\to Z$ be the composite morphism and let $(G,\Gamma_G)$ be a general fiber of $g$, and let $(F,\Gamma_F)$ be a general fiber of $f$ corresponding to $G$. We have that $F$ is isomorphic to $G$ and \[\begin{aligned}K_F+\Gamma_F&=(K_X+\Gamma)|_F=\phi^*(K_X+\Gamma)|_G\\&=(K_W+\Gamma_W-E_W)|_G=K_G+\Gamma_G-E_G=K_G+\Gamma_G\end{aligned}\] as $E_W$ is exceptional over $X$ and hence vertical over $Z$. Since $\Supp \Gamma=\Supp B$ over the generic point of $Z$, we have $\Supp\Gamma_F=\Supp B_F$ has bounded degree, and for each component $S\subset \Supp B$, $S|_F$ is reduced for $p>\frac{2}{\min (R)}$ as $B\subset R$. Hence $\Gamma_F=\Gamma|_F\leq \Supp B_F$ and we get
    \[K_F+\Gamma_F\leq K_F+\Supp B_F\sim_\mathbb Q \Supp B_F-B_F\leq \Supp B_F\]
    has bounded degree from above, say bounded by $v$. Run $(K_W+\Gamma_W)$-MMP over $Z$ we end with a good log minimal model $h:(Y,\Gamma_Y)\to Z$. For a general fiber $(H,\Gamma_H)$ of $h$, it is isomorphic to $(G,\Gamma_G)$ as the fibers are curves and $K_W+\Gamma_W$ is effective over the generic point of $Z$. Let $\phi:Y\to Y'$ be the birational contraction defined by $K_Y+\Gamma_Y$ over $Z$ and let $T$ be a component of $\Supp\lfloor \Gamma_Y\rfloor=\Supp h^*D$ which is not contracted over $Y'$. By adjunction formula, cf. \cite[Proposition 2.8]{DasWeakBAB} we have \[K_{T^\nu}+\Gamma_{T^\nu}:=(K_Y+\Gamma_Y)|_{T^\nu}\] is nef and big (hence ample as $T^\nu$ is a normal curve) with $(T^\nu,\Gamma_{T^\nu})$ lc and $\Gamma_{T^\nu}\in \Phi $ for some DCC set $\Phi$ of rational numbers as $\Gamma_Y\in\{b,1\}$. Hence $\deg_{k^\tp}(K_{T^\nu}+\Gamma_{T^\nu})\geq \theta$ for some $\theta$ depending only on $\epsilon$ by direct calculations. For general $k^\tp$-point $J$ with fiber $(H,\Gamma_H)$ other than $D$, we have
    \[\begin{aligned}
         v\geq\deg_{k^\tp}(K_H+\Gamma_H)&=\deg_{k^\tp}(K_Y+\Gamma_Y)|_{h^*J}=(K_Y+\Gamma_Y)\cdot h^*J\\&=(K_Y+\Gamma_Y)\cdot h^*D=\sum_im_i(K_Y+\Gamma_Y)\cdot T_i\\&=\sum_im_i\deg_{k^\tp}(K_{T_i^\nu}+\Gamma_{T_i^\nu}),
    \end{aligned}\]
    where the last equality holds by pulling back the algebraic classes to the normalization of $T^\nu$. Suppose some $K_{T_i^\nu}+\Gamma_{T_i^\nu}$ is big for some $i$, then $v\geq m_i\theta$ and hence $m_i\leq \frac{v}{\theta}$ as desired.

    Now we are going to get rid of $T_i$'s such that $K_{T_i^\nu}+\Gamma_{T_i^\nu}$ is not big. We run $(K_Y+\Delta_Y)$-MMP$/Y'$ with scaling of $P_Y=\Gamma_Y-\Delta_Y$ we end with a good minimal model, replace $Y$ with that model and we have $K_Y+\Delta_Y\sim_\mathbb Q-P_Y/Y'$ is semi-ample over $Y'$. As $K_{Y'}+\Gamma_{Y'}$ is ample over $Z$, we have $K_Y+\Gamma_Y-rP_Y$ is semi-ample over $Z$ for sufficiently small $r$. In fact, we have 
    \[K_Y+\Gamma_Y-rP_Y=\phi^*(K_{Y'}+\Gamma_{Y'})+r(-P_Y)\sim_\mathbb Q \phi^*(K_{Y'}+\Gamma_{Y'}+rD)+rH\]
    is semi-ample over $Z$, where $-P_Y\sim_\mathbb Q\phi^*D+H$ for some semi-ample $\mathbb Q$-divisor $H$. So $(K_Y+\Gamma_Y-rP_Y)$ induces a contraction $Y\to Y''$ over $Z$ such that $Y''\to Y'$ is a morphism for sufficiently small $r$. We see that every component of $\lfloor \Gamma_Y\rfloor$ that is contracted over $Y'$ is contracted over $Y''$. Indeed, suppose $T$ is a component of $\lfloor \Gamma_Y\rfloor$ that is contracted over $Y'$, then $(K_Y+\Gamma_Y)|_T$ has non-positive degree and so for $K_Y+\Gamma_Y-rP_Y$ as $P_Y$ is effective, so $T$ is contracted over $Y''$. 
    
    Now we have components of $h''^*D$ has coefficients bounded by $\frac{v}{\theta}$ where $h'':Y''\to Z$ is the induced morphism as each component of $h''^*D$ is a component of $h^*D$ which is not contracted over $Y'$. Define $B_{Y''}$ to be the strict transform of $B_W$ on $Y''$. Since $B_W+\epsilon\lfloor \Delta_W\rfloor\leq \Delta_W$, we have $B_{Y''}+\epsilon\lfloor\Delta_{Y''}\rfloor\leq \Delta_{Y''}$. As $\Supp h''^*D=\Supp\lfloor\Delta_{Y''}\rfloor$ and coefficient of $h''^*D$ is bounded from above by $\frac{v}{\theta}$, hence we have
    \[K_{Y''}+B_{Y''}+\frac{\epsilon\theta}{v}h''^*D\leq K_{Y''}+\Delta_{Y''}.\]
    Hence, $(Y'',B_{Y''}+\frac{\epsilon\theta}{v}h''^*D)$ is sub-lc, which implies that $(X,B+\frac{\epsilon\theta}{v}f^*D)$ is lc by \Cref{cycrep} as $(X,B)$ is log Calabi-Yau over $Z$, hence the coefficient of $D$ in $B_Z$ is less than or equal to $1-\frac{\epsilon\theta}{v}$, which implies that the multiplicities in $f^*D$ is bounded by $\frac{v}{\epsilon\theta}$ as desired.
\end{proof}

\begin{theorem}[$\mathcal H(2,t)$]\label{H2t}
    Let $(X,B)/Z$ be a 3-dimensional log Calabi-Yau fibration with $\dim Z=1$, let $S$ be a horizontal component of $B$ with coefficient greater or equal than $t$, then there is a prime number $p_0$ depending only on $t$ such that suppose $\chara k>p_0$, then $\text{sdeg}(S^\nu/Z)$ is bounded from above depending only on $t$.
\end{theorem}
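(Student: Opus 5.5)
We follow the route sketched in the introduction and split into cases according to the Mori fibre spaces produced by two successive MMPs. Throughout, $p_0$ is taken large enough that \Cref{MMP}, \Cref{relacomp}, \Cref{mult1}, \Cref{genorm} and \Cref{genres1} all apply. Since $[K(Z):K(Z)^p]=p$ for a curve over an algebraically closed field, \Cref{composition}(5) with $d=1$ lets us compose Stein degree bounds with no loss.

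\textbf{Step 1: first MMP.} Replace $(X,B)$ by a $\mathbb Q$-factorial dlt modification. This changes neither $S$ nor $K(S)$, and the pair remains log Calabi--Yau over $Z$ by \Cref{cycrep}. Let $b=\mu_SB\ge t$. Then $K_X+B-bS\sim_\mathbb Q -bS/Z$ is not pseudo-effective over $Z$, because $S$ is horizontal. We run the $(K_X+B-bS)$-MMP$/Z$, which ends with a Mori fibre space $X'\to V$ over $Z$ on which $S'$ is ample over $V$.

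\emph{Case $\dim V=2$.} Here $S'$ is horizontal over $V$ with coefficient $\ge t$, and $(X',B')\to V$ is a log Calabi--Yau fibration of relative dimension one. The relative-dimension-one bound $\mathcal H(1,t)$ bounds $\sdeg(S'^\nu/V)$. This bound is elementary: a general fibre is $\mathbb P^1$ with $\deg B'=2$, so $\mathfrak n\le 2/t$, and the inseparable part is controlled for $p>2/t$ by the reducedness argument used in \Cref{mult1}. Since $V\to Z$ is a contraction, $\sdeg(V/Z)=1$, and \Cref{composition} then bounds $\sdeg(S^\nu/Z)$.

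\emph{Case $\dim V=1$.} Then $V=Z$ and $X'$ is of Fano type over $Z$. A $-(K_{X'}+qS')$-MMP followed by \Cref{relacomp} with a finite coefficient set, for instance $q=\lfloor nt\rfloor/n$, produces a bounded-index complement $B^+\ge qS'$ over the generic point of $Z$. We may therefore assume that $B$ has coefficients in a finite set depending only on $t$.

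\textbf{Step 2: second MMP.} Run the $K_X$-MMP$/Z$ to obtain a Mori fibre space $Y\to W$.

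\emph{Case $W=Z$.} A general fibre is a klt del Pezzo surface. After lowering coefficients, the perturbed pair is $\epsilon(t)$-lc, so these surfaces lie in a family bounded over $\Spec\mathbb Z$ by BAB. By \Cref{genres1} they admit log resolutions in a bounded log smooth family. The valuation $S$, whether a divisor on $Y$ or exceptional, has log discrepancy bounded in terms of $t$. Consequently its centre appears on a bounded sequence of blow-ups, and the algebraic closure of $K(Z)$ in $K(S)$ is bounded by the number of components and the degree of the corresponding boundary divisors in the family. \Cref{genorm} handles the inseparable part for large $p$.

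\emph{Case $\dim W=2$.} Perturb the pair so that it is lc but not klt near $S$, and consider an lc place $T$.
\begin{itemize}
\item If $T$ is horizontal over $W$, adjunction to $T$ compares $\deg(S/W)$ with the degree of the coefficient-one horizontal divisor, and Step~1 bounds the latter.
\item If $T$ is vertical over $W$, use \Cref{mult1}: it bounds the multiplicities of fibres of $Y\to W$ over divisors of $W$. This lets us take whole fibres and redistribute the coefficient-one components into an $\epsilon$-lc boundary $B^\flat$ with $\epsilon$ depending only on $t$. A final $-(K+B^\flat)$-MMP then contracts to a model whose general fibre over $Z$ is a bounded $\epsilon$-lc del Pezzo pair, and we conclude as in the case $W=Z$.
\end{itemize}

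\textbf{Main obstacle.} The hardest step is the vertical branch of the case $\dim W=2$. Producing $B^\flat$ requires controlling simultaneously the fibre multiplicities and the lc-place coefficients. Positive characteristic adds a further difficulty: we must keep $\mathfrak i(S/Z)$ bounded, which needs the separable-base-change statement \Cref{composition}(2) together with geometric normality in bounded families. We would first try to reduce to the generic fibre over $Z$ via topological hyperplane sections, as in \Cref{topologicalbertini}, and work there with the $\epsilon$-lc surface pair.
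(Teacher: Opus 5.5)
Your Step 1 and the overall case division agree with the paper, and your elementary treatment of $\mathcal H(1,t)$ is fine. In fact, on the generic fibre over $V$ one has $\deg(-K)=2$, which gives $[K(S'):K(V)]\le 2/t$ directly.

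The genuine gap is the vertical branch of the case $\dim W=2$. Writing $h\colon Y\to W$, you apply \Cref{mult1} to $h$. But \Cref{mult1} requires an $\epsilon$-lc pair, and in this branch $(Y,B_Y)$ has, by construction, an lc place over the curve $C=h(T)$. So ``bound the multiplicities, then take whole fibres, then build $B^\flat$'' runs in the wrong order. Your ``main obstacle'' paragraph names this difficulty but does not resolve it.

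The missing idea is to select the lc place of minimal multiplicity.
On a dlt model, extract all lc places $D_1,\dots,D_m$ over $C$, write $h^*C=\sum_i a_iD_i+\sum_j c_jE_j$, and assume $a_1=\min_i a_i$. A $-D_1$-MMP over $W$, crepant by \Cref{cycrep}, makes $D_1$ the whole fibre, so $h^*C=a_1D_1$ near the generic point of $C$. Subtracting $D_1=\frac{1}{a_1}h^*C$ raises the log discrepancy of each divisor $E$ by $\mu_E(h^*C)/a_1$. By minimality this is $\ge 1$ for every lc place over $C$, while all other log discrepancies are already $\ge\frac1n$ because $n(K+B)\sim 0$. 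Hence $(Y^\sharp,B^\sharp-D_1)\to W$ is a $\frac1n$-lc Fano type log Calabi--Yau fibration with coefficients in $\frac1n\mathbb Z\cap[0,1]$. Only now does \Cref{mult1} bound $l:=a_1$ in terms of $t$. Then $B^\flat=B^\sharp-\frac12D_1+\frac{1}{2l}h^*L$, with $L\sim C$ general over the generic point of $Z$, is $\epsilon$-lc with $2nlB^\flat\sim-2nlK$ over $Z$; this bounded index is what gives log boundedness at the end. Topological hyperplane sections play no role here; they are used only inside the proof of \Cref{mult1}.

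Three further points are missing or would fail.

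\emph{Positioning $S$.} In the case $\dim W=2$ you never put $S$ in position. Since $a(S,Y,B_Y)=1-\mu_SB<1$, you may extract $S$. If it is horizontal over $W$ you are done by $\mathcal H(1,t)$ and \Cref{composition}. Otherwise a $-S$-MMP over $W$ makes $S$ the whole fibre over a curve $h(S)$. Only then are your branches exhaustive:
\begin{itemize}
\item Every lc centre horizontal over $W$ meets $S$, so the adjunction branch applies. There the component of $B_{T^\nu}$ of coefficient $\ge t$ coming from $S$ is horizontal over $Z$. Applying $\mathcal H(1,t)$ to $(T^\nu,B_{T^\nu})$ over the Stein factorization of $T^\nu\to Z$ bounds the finite part of the Stein factorization of $S^\nu\to Z$, which that component dominates. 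There is no ``$\deg(S/W)$'' to compare.
\item lc places disjoint from $S$ are vertical over $W$ and are removed as above, while $S$ keeps coefficient $\ge t$.
\item A vertical lc place meeting $S$ along a curve horizontal over $Z$ lies over $h(S)$, which reduces you to $S$ itself being such an lc place.
\end{itemize}

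\emph{No perturbation.} The actual proof does not perturb. Since log discrepancies lie in $\frac1n\mathbb Z$, the pair is either $\frac1n$-lc over the generic point of $Z$, in which case the anticanonical MMP and boundedness finish the proof, or it has lc places. Raising $S$ to an lc threshold while keeping $K+B\sim_\mathbb Q 0/Z$ would create coefficients not controlled by any fixed finite set. It would also destroy the index $n$ that is used twice above.

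\emph{The case $W=Z$.} Here your claim that the centre of $S$ appears after a bounded sequence of blow-ups is unjustified. For the crepant boundary, coefficient-one components allow infinitely many toric valuations of log discrepancy $0$. For a lowered $\epsilon$-lc boundary, you no longer know $a(S)<1$ after the $K$-MMP. What works is the following: on the log smooth resolution taken from the bounded family, a divisor of crepant log discrepancy $<1$ is a toroidal valuation over a stratum $C$ of the boundary. So $S_\eta\to C$ is a contraction, and $\sdeg(S_\eta/\eta)=\sdeg(C/\eta)$ is bounded.
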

\begin{proof}
    We may assume $(X,B)$ is dlt $\mathbb Q$-factorial by taking a such modification. We can run a $-S$-MMP over $Z$ and we end with a Mori fiber space $X'\to Z'/Z$ and $S$ is not contracted. Suppose $\dim Z'>\dim Z$, then $S$ is ample over $Z'$ and hence horizontal over $Z'$, we have \[\text{sdeg}(S^\nu/Z)=\text{sdeg}(S^{\prime\nu}/Z)\leq \sdeg(S^{\prime\nu}/Z')\cdot\sdeg(Z'/Z)=\sdeg(S^{\prime\nu}/Z')\]
    is bounded from above by induction hypothesis and \Cref{composition}. Thus we can assume $Z'\to Z$ is an isomorphism of curves and $S$ is ample over $Z$, therefore $(X,B)$ is of Fano type over $Z$. We run a $-(K_X+tS)\sim (B-tS)$-MMP over $Z$ we end with a minimal model $(X',B')/Z$. Since $S$ is an ample divisor, we see its pushdown $S'$ on $X'$ is also a prime divisor and horizontal over $Z$. Replacing $X$ with $X'$, we can assume $-(K_X+tS)$ is semi-ample over $Z$. By \cite[Theorem 1.7]{complement1}, there is a prime number $p_0$ depending only on $t$ such that for every $\chara k=p>p_0$, there is a monotonic $n$-complement $(X,B^+)$ of $(X,tS)$ over the generic point of $Z$. Shrink $Z$ and replace $(X,B)$ with the dlt $\mathbb Q$-factorial model of  $(X,B^+)$ we may assume $B\in \mathbf T_n:=\frac{\mathbb Z}{n}\cap [0,1]$ and $X$ is klt. By generic global ACC, we see $(X,0)$ is $\epsilon$-lc near the generic fiber over $Z$ for some $\epsilon$ depending only on $t$. Now shrink $Z$ and run $K_X$-MMP over $Z$, we end with a Mori fiber space $f:X'\to Z'/Z$ and $(X',0)$ is $\epsilon$-lc.

    Suppose $\dim Z'=1$, that is $Z'\to Z$ is an isomorphism of curves, then $X_\eta$ is an $\epsilon$-lc del Pezzo surface over $\eta$ of Picard number $1$. As $-nB_{\eta}\sim nK_{X_\eta}$, by \cite{bernasconi2024boundinggeometricallyintegraldel} such $(X'_\eta,B'_\eta)$ falls in a log bounded family $/\mathbb Z$, hence geometrically normal by \Cref{genorm}. Now we are going to keep track with the divisor $S$ over $X'$. Suppose $S$ is not contracted during the $K_X$-MMP, then we see $S_\eta$ is a component of $B_\eta$, which is bounded over $\mathbb Z$. Thus the Stein degree of $S_\eta$ over $\eta$ is bounded from above depending only on $\epsilon$, hence on $t$. So we may assume $S$ is contracted during this minimal model program. By \Cref{genres1}, we see that for $p$ large enough depending only on $t$, there is a log smooth log resolution $(Y'_\eta,B_{Y'_\eta})\to (X'_\eta,B'_\eta)$ such that the strata of $\Supp(B_{Y'_\eta})$ belongs to a log smooth bounded family over $\mathbb Z$. Since $(X'_\eta,B'_\eta)$ is the outcome of $K_{X_\eta}\sim_\mathbb Q-B_\eta$-MMP and $S_\eta$ is a divisor contracted during this MMP with $a_\eta(S,X,B)\leq 1-t<1$, we have the center $C$ of $S_\eta$ on $Y_\eta$ belongs to a strata of $\Supp(B_{Y_\eta})$, whence $C$ has bounded Stein degree over $\eta$. As $(Y,B_Y)$ is log smooth, $S_\eta\to C$ is a contraction, which shows that $\sdeg(S_\eta/\eta)=\sdeg(C/\eta)$ is bounded from above depending only on $t$.
    
    Suppose $\dim Z'=2$, let $\phi:X''\to X'$ be the contraction which only extracts the divisorial valuation at $S$ and $X''$ is $\mathbb Q$-factorial. Set $B'$ to be the pushdown of $B$ to $X'$ and denote $K_{X''}+B'':=\phi^*(K_{X'}+B')$ and let $S''$ be the strict transform of $S$ on $X''$. Suppose $S''$ is horizontal over $Z'$, then we have
    \[\sdeg(S^\nu/Z)=\sdeg (S''^\nu/Z)\leq\sdeg(S''^\nu/Z')\sdeg(Z'/Z)=\sdeg(S''^\nu/Z')\]
    is bounded from above depending only on $t$ by applying induction hypothesis on $(X'',B'')/Z'$ and \Cref{composition}. Suppose $S''$ is not horizontal over $Z'$, then  its schematic image $T'\subset Z'$ is a prime divisor horizontal over $Z$ as $S''$ is horizontal over $Z$.  Run a $-S''$-MMP over $Z'$ we end with a good minimal model $h''':(X''',B''')\to Z'$, we have  $S'''$ is the whole fiber over its image $T'\subset Z'$. Replace $(X,B)$ with $(X''',B''')$ we may assume $f:(X,B)\to Z$ admits such a factorization $h:(X,B)\to Z'/Z$.

    Since the statement only depends on the generic informations over $Z$, we may shrink $Z$ freely in the following proofs. Pick a sufficiently small $\epsilon>0$ and suppose $h:(X,B)\to Z'$ is not $\epsilon$-lc near the generic point of $Z$, then there is a horizontal lc place $D$ over $X$ by ACC of lct for threefolds, extract $D$ if necessary. Suppose all lc center of $(X,B)$ does not intersect $S$, then the centers are vertical over $Z'$ as $S$ is the whole fiber over a divisor on $Z'$. Suppose $D$ is such a lc place which does not intersect $S$, we extract $D$ if necessary. We run $-D$-MMP over $Z'$ and get $(X',B')$, we see $S$ is not contracted during this MMP as $D$ does not intersect $S$. So we may assume $-D$ is nef over $Z'$, hence is the whole fiber over the prime divisor $D'=h(D)$. Repeat this process we can get a model $(X'',B'')/Z'$ where all horizontal$/Z$ lc centers of $(X'',B'')$ is contained in some whole fiber $D$ over $D'\subset Z'$ and $D$ itself is a lc place. Hence $B''=T+\Delta$ where $T=\lfloor B\rfloor$ contains all such lc places. We claim that for $p$ large enough $T$ is an irreducible divisor. By \cite[Theorem 3.4]{complement1} we have a canonical bundle formula for the Fano type fibration $(X'',B'')/Z'$ for large $p$:
    \[K_{X''}+B''\sim_\mathbb Q h^*(K_{Z'}+B_{Z'}+M_{Z'})\sim_\mathbb Q0/Z,\] restrict the formula to the generic point $\eta$ of $Z$, we get $Z'_\eta$ is a smooth curve of genus $0$ and $h(D)_\eta$ is a Cartier divisor on $Z'_\eta$, which is a component of $B_{Z',\eta}$. Suppose there are two such $D$'s, then $B_{Z',\eta}$ contains two components of coefficient $1$ and one component $S_{Z',\eta}$ of coefficient at least $t$, hence \[0=\deg (K_{Z'_\eta}+B_{Z',\eta}+M_{Z',\eta})\geq -2+2+t=t>0\] since $M_{Z'}$ is pseudo-effective by the canonical bundle formula, which is a contradiction. Hence there is at most one such $D$ and $h(D)_\eta$ is a rational point on $Z'_\eta$ when $p$ is large enough. 
    
    Extract lc places of $(X'',B'')$ near $D$ which are horizontal over $Z$ and we get a dlt model $h^\circ:(X^\circ,B^\circ)\to Z'$, let $D_1,D_2\cdots D_m$ be the lc places over $h(D)$ and $E_1,E_2\cdots E_l$ be other components over $h(D)$. Consider \[(h^\circ)^*(h(D))=\sum\limits_{i=1}^m a_i D_i+\sum\limits_{j=1}^l b_j E_j\] where $a_i$'s are positive integers as $h(D)_\eta$ is a Cartier divisor on $Z'_\eta$, hence we may assume $a_1$ is the minimal one among all possible $a_i$'s. We run $-D_1$-MMP over $Z'$ and end with the model $h^\sharp:(X^\sharp,B^\sharp)\to Z'$ where $D_1$ plays the role of $D$ in $(X,B)$, which is the whole fiber over $h(D)$. We see that \[(X^\sharp, B^\sharp-D_1)=(X^\sharp,B^\sharp-\frac{1}{a_1}(h^\sharp)^*(h(D))),\] 
    which is $\epsilon=\frac{1}{n}$-lc as all lc places of $(X,B)$ touches $D$ and all of them has log discrepancy $\geq 1$ in $(X^\sharp, B^\sharp)$ and all other places in $(X,B)$ has log discrepancy $\geq \frac{1}{n}$. So we have \[h^\sharp:(X^\sharp,B^\sharp-D_1)\to Z'\] is an $\epsilon$-lc Fano type log Calabi-Yau fibration with boundary coefficients in $\mathbf T_n$. We replace $(X,B, D)$ with $(X^\sharp, B^\sharp,D_1)$ by \Cref{cycrep}. By \Cref{mult1} applied on $(X,B-D)\to Z'$, the multiplicities of fibers in $X$ over divisors in $Z'$ is bounded from above depending only on $\epsilon$. Say $h^*(h(D))=lD$ for some integer $l$ bounded from above, we define \[B^\flat :=B-\frac{1}{2}D+\frac{1}{2l}h^*L\] for some general integral divisor $L$ such that $L_\eta\sim h(D)_\eta$ is a Cartier divisor on $Z'_\eta$, we have $(X_\eta,B_\eta^\flat)\to Z_\eta$ is a log Calabi-Yau fibration with $\epsilon$-lc singularity for $\epsilon<\frac{1}{2}$. Replace $B$ with $B^\flat$ and we may assume $(X,B)$ is $\epsilon$-lc.
    
    Now we run $-K_X\sim_\mathbb Q B$-MMP$/Z$ and we end with a model $X\to X'/Z$ defined by the semi-ample fibration, $\dim X'=3$ as $X$ is of Fano type over $Z$. We have $(X',B')$ is $\epsilon$-lc and $-K_{X'}$ is ample over $Z$, hence the generic fiber $X'_\eta$ is an $\epsilon$-lc del Pezzo surface. Moreover, We see $(X'_{\eta}, B'_\eta)$ belongs to a log bounded family $(\mathfrak X,\mathfrak B)\to \mathfrak T/\mathbb Z$ for $p$ large enough as $2nlB\sim -2nlK_X$ over $Z$ by our contruction. By \Cref{genres1}, we can construct a log smooth family $\mathcal P:=(\mathfrak X^s,\mathfrak B^s)\to \mathfrak T^s$ such that for $p>p_0$ large enough, $(X',B')$ admits a log resolution $(X^s, B^s)$ which is parametrized by $\mathcal P/\mathfrak T$. Hence $S$ contracts to some strata of $B^s$ which implies that $\sdeg(S/Z)$ is bounded from above as desired.
    
    So we assume there exists some lc place $D$ horizontal$/Z$ whose center intersects $S$, we assume $D\neq S$ first. Then consider the adjunction formula \[(K_X+B)|_{D^\nu}=K_{D^\nu}+B_{D^\nu},\] there is a component $S_{D^\nu}$ in $\Supp (B_{D^\nu})$ with coefficient $\geq t$ in $B_{D^\nu}$. Suppose $S_{D^\nu}$ is not horizontal over $Z$, we simply replace $Z$ with $Z-f(S_{D^\nu})$. So we assume $S_{D^\nu}$ is horizontal over $Z$. Suppose $D^\nu\to U\to Z$ is the Stein factorization of $D^\nu\to Z$ and $S^\nu\to R\to Z$ is the Stein factorization of $S^\nu\to Z$. Consider the following commutative diagram:
    \[\begin{tikzcd}
	{S^\nu_{D^\nu}} & {D^\nu} & U \\
	{S^\nu} & R & Z
	\arrow[from=1-1, to=1-2]
	\arrow[from=1-1, to=2-1]
	\arrow[from=1-2, to=1-3]
	\arrow[from=1-3, to=2-3]
	\arrow[from=2-1, to=2-2]
	\arrow[from=2-2, to=2-3]
\end{tikzcd}\]
Then we see that $S_{D^\nu}$ dominates $R$ and $U$. Suppose that $\sdeg(D^\nu/Z)=\deg(U/Z)$ is bounded from above, then by \Cref{composition}, we have 
\[\sdeg(S^\nu_{D^\nu}/Z)\leq \sdeg(S^\nu_{D^\nu}/U)\cdot\deg(U/Z)\]
is bounded from above by induction hypothesis applied to the contraction $(D^\nu,B_{D^\nu})\to U$. Hence $\deg(T/Z)$ is bounded from above by $\sdeg(S^\nu_{D^\nu}/Z)$ as desired. If $D$ is horizontal over $Z'$, then this is again done by $\mathcal H(1,t)$ and \Cref{composition}. So we may assume $h(D)$ is a prime divisor on $Z'$ which intersect $h(S)$ and we have\[h(S_{D^\nu})\subset h(S)\cap h(D),\] which is also a closed subset of $h(S)$ horizontal over $Z$. Hence we have $h(S)=h(D)$ as reduced divisors, in particular, it suffices to prove the case for $S=D$ is itself the lc center by running $-D$-MMP over $Z'$. 

Now replace $S$ with the lc place $D$ and do the same replacement for other lc centers as the discussions above, we may assume the following assumptions by shrinking $Z$ properly:
\begin{enumerate}
    \item $(X,B)$ is lc and $n(K_X+B)\sim0/Z$,
    \item all lc centers of $(X,B)$ are vertical over $Z'$ but horizontal over $Z$,
    \item each of the lc centers is contained in a unique lc place $D_i\subset X$ which is the whole fiber over its image in $Z'$,
    \item none of the images of $D_i$'s in $Z'$ intersects and
    \item one of such $D_i$'s is $S$.
\end{enumerate}
This case can be similarly settled by using the techniques above. We still state the full arguments here. Under these assumptions, we can construct a higher model $\psi:(X',B')\to (X,B)$ satisfying the following properties by taking a dlt modification and extracting the lc places properly:
\begin{enumerate}
    \item $(X',B')$ is a $\mathbb Q$-factorial dlt pair crepant over $(X,B)$,
    \item for each $D_i$, denote $\mathfrak G_i$ to be the set of all lc places over $(X,B)$ whose center is contained in $D_i$, there exists some $D_i^0:=D_i,D_i^1,D_i^2,\cdots, D_i^{m_i}\in \mathfrak G_i$ for some natural number $m_i$ such that 
    \[\sum\limits_{j=0}^{m_i}D_i^j=\lfloor B\rfloor\text{ over }h(D_i)\subset Z',\]
    \item $D_i^{m_i}$ has the minimal possible multiplicity $l_i$ in $(h')^*h(D_i)$ among all lc places in $\mathfrak G_i$.
\end{enumerate}
We run $-D_i^{m_i}$-MMP on $(X',B')$ over $Z'$ successively for all $i$, as $h(D_i)$ does not intersect, we have $D_i^{m_i}$ are not contracted in the ending model $(X^\sharp, B^\sharp)$, which is crepant to $(X',B')$ by \Cref{cycrep}. The image of $S$ in $X^\sharp$ is contained in some component $S^\sharp=D_{i_0}^{m_{i_0}}$ for some $i$. Thus its suffices to prove $\sdeg(S^{\sharp\nu}/Z)$ is bounded from above depending only on $t$. We see that \[h^\sharp:(X^\sharp, B^\sharp-\sum\limits_i D_i)\to Z'\]
is an $\epsilon=\frac{1}{n}$-lc Fano type log Calabi-Yau fibration with boundary coefficients in $\mathbf T_n$. By \Cref{mult1} all $l_i$'s are bounded depending only on $\epsilon$ for large enough $p$, hence only on $t$. We define
\[B^\flat:=B^\sharp-\frac{1}{2}\sum\limits_i D_i^{m_i}+\frac{1}{2}\sum\frac{1}{l_i}(h^\sharp)^* L_i\]
for some general integral divisor $L_i\subset Z'$ which is linearly equivalent to $h(D_i)$ over the generic point $\eta$ of $Z$. We see that the following properties holds by shrinking $Z$ properly:
\begin{enumerate}
    \item $(X^\sharp,B^\flat)$ is $\epsilon$-lc for all $\epsilon<\frac{1}{2}$,
    \item $(X^\sharp,B^\flat)$ is log Calabi-Yau over $Z$,
    \item $S^\sharp$ is a component in $B^\flat$ with coefficient $\frac{1}{2}$ and
    \item$2n\prod\limits_il_iB^\flat\sim-2n\prod\limits_il_iK_{X^\sharp}/Z$.
\end{enumerate}
Finally, we run $-K_{X^\sharp}\sim_\mathbb Q B^\flat$-MMP over $Z$, replace it with the ending model and we have a birational morphism $(X^\sharp,B^\flat)\to (X^\natural,B^\natural)/Z$ defined by the semi-ample fibration. We have $(X_\eta^\natural,B_\eta^\natural)$ is in a log bounded family, hence $\sdeg(S^\sharp/Z)$ is also bounded from above depending only on $t$ by the same reason as above, as desired.
\end{proof}

Besides the horizontal component, one can bound the separable Stein degree for vertical components whose image is also a divisor by using the following lemma.

\begin{lemma}\label{lccompn}
    Let $K$ be a separably closed field of characteristic $p>5$ and $X/K$ be a geometrically integral normal projective surface with $H^0(X,\mathcal O_X)=K$. Suppose $(X,\Delta)$ is a lc pair with $\Delta\in \Phi(R)$ for some finite set of rational numbers $R$ and $x\in X$ is a closed point, then the number of irreducible components of $\Delta$ passing through $x$ with coefficients $\geq t$ is bounded from above depending only on the real number $t>0$.
\end{lemma}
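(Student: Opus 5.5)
The plan is to reduce to a local computation at the closed point $x$ on a resolution, and to use the fact that an lc surface singularity with many boundary branches through it is forced to be essentially smooth, where the count is elementary. Let $\Delta=\sum_j c_j\Delta_j$ and let $\Delta_1,\dots,\Delta_N$ be the components through $x$ with $c_j\ge t$. We want $N\le 2/t$ (or a similar bound depending only on $t$).

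First, pass to a geometric situation. Since $K$ is separably closed, the base change to $K^a$ is a universal homeomorphism, so the number of irreducible components through $x$ does not drop. However, the lc property and the coefficients may change under inseparable base change. To avoid this, I would work over $K$ directly, with $x$ a closed point and residue field $\kappa(x)$ purely inseparable over $K$. Take the minimal resolution, or a log resolution via the three-dimensional-free surface resolution (excellent surfaces admit resolutions), $\pi:Y\to X$ near $x$. Write
\[K_Y+\Delta_Y=\pi^*(K_X+\Delta),\]
where $\Delta_Y=\sum c_j\Delta_j^\sim+\sum e_iE_i$ with $e_i\le 1$ by lc.

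Next, handle the two cases.

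\emph{Case 1: $x$ is a regular point of $X$.} Take a single blow-up at $x$ with exceptional curve $E$. Then
\[e_E=\sum_j c_j\,\mathrm{mult}_x\Delta_j-1.\]
The multiplicity is computed relative to $\kappa(x)$, so I would use degrees over $K$ consistently. Each $\mathrm{mult}_x\Delta_j\ge 1$, and lc forces $e_E\le 1$, hence $Nt\le 2$.

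\emph{Case 2: $x$ is singular.} Use the classification-free approach. On the minimal resolution, the exceptional divisors $E_i$ have $e_i\ge 0$ and $e_i\le 1$. Choose an exceptional curve $E_0$ and intersect. Since $(K_Y+\Delta_Y)\cdot E_0=0$, we get
\[\sum_j c_j\,\Delta_j^\sim\cdot E_0\le -(K_Y+E_0)\cdot E_0+(1-e_0)E_0^2+\dots,\]
and adjunction on $E_0$ bounds the left side by $2\,h^0(\mathcal O_{E_0})$-type constants. More robustly, I would argue by taking the dlt/plt modification extracting a divisor $E$ with $a(E,X,\Delta)$ minimal, then applying adjunction
\[(K_Y+\Delta_Y)|_{E^\nu}=K_{E^\nu}+\Delta_{E^\nu}.\]
Here $E^\nu$ is a regular (possibly non-smooth, but $p>5$ handles this) conic over the field $H^0(\mathcal O_E)$. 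The strict transforms $\Delta_j^\sim$ meet $E$ and contribute coefficients at least $t$ to $\Delta_{E^\nu}$. Since $-(K_Y+E)\cdot E\ge 0$ relatively, comparing degrees gives $\deg K_{E^\nu}+Nt\le 0$, that is, $Nt\le 2$ in degrees normalized over $H^0(\mathcal O_{E})$. This needs care because each $\Delta_j^\sim\cap E$ has degree at least the degree of $H^0(\mathcal O_E)$.

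The main obstacle is the positive-characteristic, non-closed-field issue. $E^\nu$ might be a non-smooth regular curve with $\deg K_{E^\nu}>-2$, and intersection points may have inseparable residue fields, which distorts the counting. I would handle this with the Tanaka/DasWeakBAB-style adjunction valid for $p>5$ together with the fact that $\Delta\in\Phi(R)$. This makes the different's coefficients lie in a DCC set, so the degree inequality still yields a bound depending only on $t$ (and harmlessly on $R$). If $X$ is klt at $x$ but not covered by one extraction, I would first extract a divisor computing the lc threshold of $\sum\Delta_j$ and reduce to Case 2.
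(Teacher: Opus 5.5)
Your overall plan (extract one divisor over $x$, restrict to it by adjunction, and compare degrees on a genus-zero curve) is the paper's plan. Your regular-point case, using one blow-up and $a(E,X,\Delta)=2-\sum_j c_j\,\mathrm{mult}_x\Delta_j\ge 0$, is correct.

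\textbf{The gap.} It is in Case 2, at the step that carries the proof: the claim that each $\Delta_j^\sim$ ``contributes at least $t$'' to the different on $E^\nu$. Suppose $y\in E$ is a singular point of $Y$ of index $m$. The part of the different coming from $\Delta_j^\sim$ is then $c_j(\Delta_j^\sim\cdot E)_y=c_jk_j/m$ with $k_j\ge 1$, which can be as small as $c_j/m$.

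You recover the count only from the total coefficient $\frac{m-1}{m}+\frac{s_y}{m}$ at $y$, where $s_y=\sum_{j\ni y}c_jk_j$. This is $\ge s_y\ge\sum_{j\ni y}c_j$ \emph{only if} $s_y\le 1$, i.e.\ only if $(Y,E+\Delta^\sim)$ is lc at $y$. A divisor computing the minimal log discrepancy of a klt germ need not have this property. For example, on $\mathbb A^2$ with $\Delta=cC$, $C=\{y^2=x^3\}$ and $\tfrac12<c<\tfrac34$, the ordinary blow-up computes the mld at the origin. Yet $(Y,E+cC^\sim)$ is not lc at the tangency point: two further blow-ups give log discrepancy $1-2c<0$.

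Without lc-ness, components of $\Delta$ meeting $E$ at a common singular point of $Y$ are not controlled. Your DCC remark does not address this, since it says nothing about several branches through one point.

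There is a second problem in the same step. The inequality you invoke, $-(K_Y+E)\cdot E\ge 0$, only bounds $\deg(K_{E^\nu}+\mathrm{Diff}_{E^\nu}(0))$. What you actually need is $(K_Y+E+\Delta^\sim)\cdot E=a(E,X,\Delta)\,E^2\le 0$, which comes from crepancy.

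\textbf{How the paper avoids this.} It makes the lc-place route the main line.
\begin{itemize}
\item It first discards the case where $X$ itself is lc but not klt at $x$; then no component of $\Delta$ can pass through $x$.
\item It raises one component of $\Delta$ to its lc threshold at $x$. This does not decrease the number of components with coefficient $\ge t$, and makes $(X,\Delta)$ lc but not klt at $x$.
\item It takes a plt blow-up $f\colon X'\to X$ extracting a single lc place $C$ with $K_{X'}+\Delta'=f^*(K_X+\Delta)$.
\end{itemize}
Then $K_C+\Delta_C\equiv 0$ with $(C,\Delta_C)$ lc. A component of coefficient $1-a$ meeting $C$ at a point of index $n$ produces coefficient $1-\tfrac an\ge 1-a$ in $\Delta_C$. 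Finally $2=-\deg K_C=\deg\Delta_C$ bounds the count. Your closing sentence about extracting a divisor computing an lc threshold points in this direction, but it has to replace the mld extraction and be combined with the per-point estimate above.
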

\begin{proof}
    Suppose $X$ is lc but not klt at $x$, then we are done since no component of $\Delta$ would pass through $x$. Now we assume that $X$ is klt at $x$. If $(X,\Delta)$ is lc but not klt at $x$, we see that there is a plt blow-up $f:(X',\Delta')\to (X,\Delta)$ such that $f$ extracts exactly one lc place $C=\lfloor \Delta'\rfloor$ and $K_{X'}+\Delta'=f^*(K_X+\Delta)$. Applying the adjunction formula, cf. \cite[Proposition 2.8]{DasWeakBAB}, we see $0\sim_\mathbb Q (K_{X'}+\Delta')|_{C}\sim_\mathbb QK_C+\Delta_C$ and $C$ is a normal curve of arithmetic genus $0$ over $k(x)$, hence either a conic over $k(x)$ or a projective line. Suppose any component $D'$ of $\Delta'$ with coefficient $1-a$ rather than $C$ contributes at least a component $D_C\in \Delta_C$ with coefficient $1-\frac{a}{n}\geq 1-a$ for some natural number $n$. Thus suppose there are $r$ components of $\Delta$ which has coefficient $\geq t$, then \[2=-\deg K_C=\deg \Delta_C\geq (r-1)t.\] As a result, $r\leq \frac{2}{t}+1$. Suppose $(X,\Delta)$ is klt, let $E$ be a component of $\Delta$, then we replace $\Delta$ by $\Delta+\text{lct}_x (E,X,\Delta) E$ we see $(X,\Delta)$ is lc but not klt and the number of components whose coefficient $\geq t$ does not decrease.
\end{proof}
\begin{theorem}[$\mathcal V(1,t)$]\label{V1t}
    Let $t>0$ be a real number and let $f:(X,B)\to Z$ be a 3-dimensional Fano type log Calabi-Yau fibration of relative dimension $1$. Let $S$ be a vertical irreducible component of $B$, there exists a prime number $p_0=p_0(t)$ and a natural number $n=n(t)$ such that suppose
    \begin{enumerate}
    \item $\chara k>p_0$,
        \item the coefficient of $S$ in $B$ is greater or equal than $t$,
        \item the image $T$ of $S$ on $Z$ is a prime divisor.
    \end{enumerate}
    Then $\mathfrak n(S/T)\leq n$.
\end{theorem}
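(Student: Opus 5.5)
We plan to reduce to a surface statement over the generic point of $T$ and then apply \Cref{lccompn}. Since $\mathfrak n(S/T)$ only depends on the behaviour of $S\to T$ over the generic point of $T$, we shrink $Z$ around the generic point of $T$ and assume $T$ is Cartier and every component of $B$ is either horizontal or maps onto $T$. Replacing $(X,B)$ by a $\mathbb Q$-factorial dlt model does not change $S$ or its coefficient, and keeps the Fano type log Calabi--Yau structure. By \Cref{relacomp} applied to $(X,tS)$ (after a $-(K_X+tS)$-MMP over $Z$ as in the proof of \Cref{H2t}), there is a monotonic $n(t)$-complement $B^+\geq tS$ over the generic point of $T$. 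We replace $B$ by $B^+$, so $B\in\mathbf T_n$. By \cite[Theorem 3.4]{complement1} and \Cref{mult1}, for $p$ large the general fibres of $X/Z$ are $\mathbb P^1$ and the canonical bundle formula holds.

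Next we cut down to a surface over a curve. We pick a topological hyperplane section $L^\tp$ of $Z$ as in \Cref{topologicalbertini}. Then $(X^\tp_L,B^\tp_L)\to L^\tp$ is an lc surface pair over a smooth curve over the separably closed field $k^\tp$, with coefficients still in $\mathbf T_n$. The divisor $T$ becomes a finite set of $k^\tp$-points, and $S$ becomes $S_L=S|_{X_L}$. Because $k^\tp$ is separably closed and $\mathfrak n$ is stable under separable and purely transcendental base change (\Cref{composition}(1)), $\mathfrak n(S/T)$ is the number of irreducible components of $S_L$ lying over one point $z\in T_L$. These components all appear in $B_L$ with coefficient at least $t$.

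The key step is to show that all components of the fibre $X_{L,z}$ lying in $S_L$ pass through a common closed point, or else to bound their number in another way. Running a $-S$-MMP over $Z$ (or over $L$) makes $S$ the whole fibre over $T$, or contracts the other vertical components, while keeping the crepant structure by \Cref{cycrep}. The fibre of a relative-dimension-one contraction with $\mathbb P^1$ general fibre is connected. We then contract all components of the fibre except one component $C$ of $S_L$, and consider a point $x\in C$ where the images of the other components meet. If instead we contract $S_L$ onto a point, it becomes a configuration of curves in a tree. We would rather apply \Cref{lccompn} on the base side: we pass to the generic point of $T$ by working on the surface $S^\nu$ itself, via adjunction $(K_X+B)|_{S^\nu}=K_{S^\nu}+B_{S^\nu}$, or on the fibre surface. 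The components of $S_L$ are curves in one fibre of $X_L$; a more robust route is to do adjunction to a general fibre-chain point.

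Concretely, we plan this: contract over $L$ all fibre components over $z$ except one component $C_0$ not in $S$ if one exists, or any one otherwise. The images of the remaining components collapse to closed points on a surface $Y$, which is lc with boundary in $\Phi$ and coefficient at least $t$. This does not immediately give the bound, since they are points, not divisors. Therefore we instead use a dual view. Take $W=X_L$ and, after a further MMP, a model where the fibre over $z$ is the union of the components of $S_L$ meeting at one point $x$. By connectedness this point can be arranged through contractions of trees of rational curves. Then \Cref{lccompn}, applied to $(W,B_W)$ at $x$ over the field $k^\tp$, bounds the number of components through $x$ with coefficient at least $t$ by $2/t+1$. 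This step, arranging that all components of $S_L$ pass through a single point on an lc surface with $H^0=K$ and geometrically integral (the hypotheses of \Cref{lccompn}), is the main obstacle. If it fails, we would fall back on the canonical bundle formula to bound the fibre multiplicities (\Cref{mult1}) and on the intersection number of the fibre with an ample horizontal component of $B$, which has bounded degree $\leq 2/\min R$ on the generic fibre. That count bounds the number of fibre components meeting it, and from there we would bound the whole tree via lc-ness of the chain.
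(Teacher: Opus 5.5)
\textbf{There is a genuine gap, at exactly the step you flag.}

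\textbf{Your primary route fails.} You want to contract curves until all components of $S_L$ over $z$ pass through one closed point, and then apply \Cref{lccompn} once. This is not available. Once $S$ is the whole fibre over $T$ (after the $-S$-MMP), every component of $X_{L,z}$ is a component of $S_L$. Any contraction that makes the surviving components concurrent therefore has to contract components of $S_L$ themselves, which destroys the number $\mathfrak n(S/T)$ you are trying to bound. The fibre is a tree of rational curves, not a fan through one point.

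\textbf{Your fallback is missing its key input.} It is closer to a working argument, but two steps are unjustified. You assume a horizontal component of $B$ that is ample over $Z$, and a horizontal component of $B^+$ need not be relatively ample. You also leave the fibre components that do not meet it to an unspecified ``lc-ness of the chain'' argument, and nothing in the proposal bounds those.

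\textbf{The missing idea is to make the horizontal divisor relatively ample by an MMP and then change the base.}
\begin{enumerate}
\item After making $S$ the whole fibre over $T$ and taking the monotonic $n$-complement $B^+\ge tS$, pick a horizontal component $H$ of $B^+$. One with coefficient $\ge 1/n$ exists because $K_X+B^+\sim_{\mathbb Q}0/Z$.
\item Run a $-H$-MMP over $Z$. It cannot contract $S$, since $S$ is a whole fibre over $T$. It ends with a Mori fibre space $X'\to Z'$ with $Z'\to Z$ birational and an isomorphism near the generic point of $T$.
\item Replace $X\to Z$ by $X'\to Z'$. Now $H$ is ample over the base, so after a topological hyperplane section $H_L$ meets every component of $X_{L,z}=S_z$.
\item The set $H_L\cap X_{L,z}$ has at most $N:=\deg(H/Z)$ points. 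You can bound $N$ via $\mathcal H(1,t)$ as the paper does, or via $H\cdot F\le 2n$ on a general fibre $F\cong\mathbb P^1$, as your remark about degree $\le 2/\min R$ suggests.
\item Apply \Cref{lccompn} at each of these points: at most $2/t+1$ components of $S_z$ pass through each one. This gives $\mathfrak n(S/T)\le N(2/t+1)$.
\end{enumerate}
Note that the intersection count bounds only the number of points of $H_L\cap X_{L,z}$, not the number of components through each of them. That is where \Cref{lccompn} enters, and it is applied at up to $N$ points, not at a single point.
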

\begin{proof}
    We may assume $t$ is a rational number. Suppose $p>5$, then we see general fibers of $X/Z$ are normal curves as $X/Z$ is of Fano type. Take a dlt $\mathbb Q$-factorial model we may assume $(X,B)$ is dlt $\mathbb Q$-factorial. Shrink $Z$ near the generic point of $T$ we may assume $Z$ is regular. Run an $-S$-MMP  on $X$ over $Z$ and shrink $Z$ near the generic point of $T$ we may assume $-S$ is $f$-nef as $S$ is vertical and maps to a prime divisor on $Z$. Since $-S$ is $f$-nef, so $f^{-1}(T)$ is irreducible with the reduced component $S$. Run $-(K_X+tS)$-MMP over $Z$, we may assume $-(K_X+tS)$ is nef over $Z$. As $S$ is the whole fiber over $T$, it is not contracted under this MMP. By \cite[Theorem 1.7]{complement1}, there exists a prime number $p_0(t)$ and a natural number $n(t)$  such that if $p>p_0$, there is a monotonic $n$-complement $(X,B^+)$ of $K_X+tS$ over the generic fiber of $T$. As $X$ is of Fano type over $Z$, there is a horizontal component $H$ of $B^+$ such that the coefficient of $H$ in $B^+$ is greater or equal than $a:=\frac{1}{n}$. 

    By $\mathcal H(1,t)$, the $\text{sdeg}(H^\nu/Z)$ is bounded from above by $N$ and $K(Z)\to K(H)$ is separable for sufficiently large $p$. Run $-H$-MMP over $Z$ we end with a model $(X',B')$ and $H$ is not contracted during this MMP. Since $S$ is the whole fiber over $T$, $S$ is not contracted either during this MMP. So we end with a Mori fiber space $X'\to Z'/Z$ with the image of $S$ a prime divisor $T'$ on $Z'$ and $H$ is ample over $Z'$. Replace $Z$ with $Z'$ and $(X,B)$ with $(X',B')$ we may assume $X\to Z$ is an $-H$-Mori fiber space. Hence for a general reduced fiber $S_z$, $H$ intersects every component of $S_z$. Pick a topological hyperplane section $L^\tp$ on $Z$, set $X_{L^\tp}$ to be the pull back of $L^\tp$ on $X$. By \Cref{topologicalbertini}, we see $(X^\tp,X_{L^\tp}+B^\tp)$ is a Fano type log Calabi-Yau fibration over $Z^\tp$. By adjunction formula, we see $(X_{L^\tp},B_{L^\tp})\to L^\tp$ is also a log Calabi-Yau fibration. We see that $H_{L^\tp}$ is irreducible by Bertini theorem and the coefficient of $H_{L^\tp}$ in $B_{L^\tp}$ is greater or equal than $a$. Pick a general closed point $z\in {L^\tp}\cap T^\tp$, we see $H_{L^\tp}$ intersects every component of $S_z$. We see that there are at most $N$ intersection points on $H$ as $\sdeg(H^\nu/Z)\leq N$. By \Cref{lccompn}, there are at most $\frac{2}{t}+1$ components of $S_z$ passing though one such intersection point, so there are at most $N(\frac{2}{t}+1)$ components in $S_z$, which is bounded depending only on $t$.
\end{proof}
\bibliographystyle{alpha}
\bibliography{cite}

\newcommand{\etalchar}[1]{$^{#1}$}
\begin{thebibliography}{AbH{\etalchar{+}}25}

\bibitem[AbH{\etalchar{+}}25]{Resolution}
Dan {Abramovich}, Andr{\'e} {belotto da Silva}, Ming {Hao Quek}, Michael {Temkin}, and Jaros{\l}aw {W{\l}odarczyk}.
\newblock {Logarithmic resolution of singularities in characteristic 0 using weighted blow-ups}.
\newblock {\em arXiv e-prints}, page arXiv:2503.13341, March 2025.

\bibitem[{Bir}12]{SingbaseFanofib}
Caucher {Birkar}.
\newblock {Singularities on the base of a Fano type fibration}.
\newblock {\em arXiv e-prints}, page arXiv:1210.2658, October 2012.

\bibitem[Bir22]{BirkarFano}
C.~Birkar.
\newblock Boundedness of {Fano} type fibrations.
\newblock {\em Ann. Sci. {\'E}cole Norm. Sup. (4)}, 2022.
\newblock To appear.

\bibitem[BM24]{bernasconi2024boundinggeometricallyintegraldel}
Fabio Bernasconi and Gebhard Martin.
\newblock {Bounding geometrically integral del Pezzo surfaces}.
\newblock {\em Forum of Mathematics, Sigma}, 12:e81, 2024.

\bibitem[BQ25a]{BirkarQu}
C.~Birkar and S.~Qu.
\newblock Stein degree on log {Calabi--Yau} fibrations, 2025.
\newblock arXiv:2509.20948.

\bibitem[BQ25b]{BirkarQuNonFano}
C.~Birkar and S.~Qu.
\newblock Stein degree on non-{Fano} type fibrations, 2025.
\newblock arXiv:2509.21824.

\bibitem[BQ25c]{sdeglogcalabiyau}
Caucher {Birkar} and Santai {Qu}.
\newblock {Stein degree on log Calabi-Yau fibrations}.
\newblock {\em arXiv e-prints}, page arXiv:2509.20948, September 2025.

\bibitem[BW17]{birkar2014existencemorifibrespaces}
Caucher Birkar and Joe Waldron.
\newblock {Existence of Mori fibre spaces for 3-folds in char $p$}.
\newblock {\em Advances in Mathematics}, 313:62--101, June 2017.

\bibitem[CP08]{Cossart2008ResolutionOS}
Vincent Cossart and Olivier Piltant.
\newblock {Resolution of singularities of threefolds in positive characteristic. I.: Reduction to local uniformization on Artin–Schreier and purely inseparable coverings}.
\newblock {\em Journal of Algebra}, 320:1051--1082, 2008.

\bibitem[CP09]{Cossart2009RESOLUTIONOS}
Vincent Cossart and Olivier Piltant.
\newblock {Resolution of singularities of threefolds in positive characteristic. II.}
\newblock {\em Journal of Algebra}, 321:1836--1976, 2009.

\bibitem[Cut04]{Cutkosky2004ResolutionOS}
Steven~Dale Cutkosky.
\newblock {Resolution of Singularities}.
\newblock 2004.

\bibitem[{Das}18]{DasWeakBAB}
Omprokash {Das}.
\newblock {On the boundedness of anti-canonical volumes of singular Fano $3$-folds in characteristic $p>5$}.
\newblock {\em arXiv e-prints}, page arXiv:1808.02102, August 2018.

\bibitem[FvGV91]{joinsandintersections}
Hubert Flenner, Leendert van Gastel, and Wolfgang Vogel.
\newblock Joins and intersections.
\newblock {\em Mathematische Annalen}, 291:691--704, 01 1991.

\bibitem[HNT17]{KYH20lcmmp}
Kenta {Hashizume}, Yusuke {Nakamura}, and Hiromu {Tanaka}.
\newblock {Minimal model program for log canonical threefolds in positive characteristic}.
\newblock {\em arXiv e-prints}, page arXiv:1711.10706, November 2017.

\bibitem[{Jia}25]{complement1}
Xintong {Jiang}.
\newblock {Boundedness of complements for fibered Fano threefolds in positive characteristic}.
\newblock {\em arXiv e-prints}, page arXiv:2506.00553, May 2025.

\bibitem[Jou83]{Bertini}
Jean-Pierre Jouanolou.
\newblock {\em Théorèmes de Bertini et applications}.
\newblock Progress in mathematics ; vol. 42. Birkhäuser, Boston, 1983.

\bibitem[{Sho}93]{3foldflip}
V.~V. {Shokurov}.
\newblock {3-FOLD Log Flips}.
\newblock {\em Izvestiya: Mathematics}, 40(1):95--202, February 1993.

\bibitem[{Sta}26]{stacks-project}
The {Stacks project authors}.
\newblock The stacks project.
\newblock \url{https://stacks.math.columbia.edu}, 2026.

\bibitem[{Tan}22]{Bertinibasechange}
Hiromu {Tanaka}.
\newblock {Bertini theorems admitting base changes}.
\newblock {\em arXiv e-prints}, page arXiv:2208.00254, July 2022.

\end{thebibliography}
\end{document}